\documentclass[11pt,a4paper,reqno]{amsart}
\usepackage{vmargin,color}
\usepackage[latin1]{inputenc}
\usepackage{amsmath,amsfonts,amsthm,epsfig,graphicx}
\usepackage{pstricks,pst-plot,multido}
\usepackage{caption}

\def\H{\mathcal{H}}
\def\L{\mathcal{L}}
\def\M{\mathcal{M}}

\def\N{\mathbb N}

\def\R{\mathbb R}

\def\C{\mathbf{C}}

\def\Om{\Omega}
\def\S{\Sigma}
\def\G{\Gamma}

\def\a{\alpha}
\def\b{\beta}
\def\g{\gamma}

\def\e{\varepsilon}

\def\l{\lambda}
\def\s{\sigma}

\def\om{\omega}
\def\vphi{\varphi}

\def\Lip{{\rm Lip}}

\def\Div{{\rm div}\,}

\def\dist{{\rm dist}}

\def\spt{{\rm spt}}

\def\toloc{\stackrel{{\rm loc}}{\to}}

\def\ov{\overline}
\def\pa{\partial}

\def\cc{\subset\subset}

\def\p{\mathbf{p}}
\def\q{\mathbf{q}}
\def\C{\mathbf{C}}
\def\D{\mathbf{D}}

\def\pae{\partial^{{\rm e}}}

\DeclareMathOperator*{\aplim}{ap\,lim}
\def\Gv{\Gamma^{{\rm v}}}

\def\es#1{e^{-#1^{2}/2}}
\def\esm#1{e^{-|#1|^{2}/2}}

\newtheorem{theorem}{Theorem}[section]
\newtheorem{theoremletter}{Theorem}

\newtheorem{proposition}[theorem]{Proposition}
\newtheorem{lemma}[theorem]{Lemma}
\newtheorem{corollary}[theorem]{Corollary}
\newtheorem{remark}[theorem]{Remark}
\newtheorem{example}[theorem]{Example}

\numberwithin{equation}{section}
\numberwithin{figure}{section}

\pagestyle{plain}

\setcounter{tocdepth}{2}

\begin{document}

\title{Essential connectedness and the rigidity problem \\ for Gaussian symmetrization}

\author{F. Cagnetti}

\address{University of Sussex, Pavensey 2, Department of Mathematics, BN1 9QH, Brighton, United Kingdom}
\email{f.cagnetti@sussex.ac.uk}

\author{M. Colombo}
\address{Scuola Normale Superiore di Pisa, p.za dei Cavalieri 7, I-56126 Pisa, Italy}
\email{maria.colombo@sns.it}

\author{G. De Philippis}
\address{Institute for Applied Mathematics, University of Bonn, Endenicher Allee 60, D-53115 Bonn, Germany}
\email{guido.de.philippis@hcm.uni-bonn.de}

\author{F. Maggi}
\address{Department of Mathematics, The University of Texas at Austin,  2515 Speedway Stop C1200, Austin, Texas 78712-1202, USA}
\email{maggi@math.utexas.edu}

\maketitle

\begin{abstract}
  {\rm We provide a geometric characterization of rigidity of equality cases in Ehrhard's symmetrization inequality for Gaussian perimeter. This condition is formulated in terms of a new measure-theoretic notion of connectedness for Borel sets, inspired by Federer's definition of indecomposable current.}
\end{abstract}


\section{Introduction}

\subsection{Overview} Symmetrization inequalities are among the most basic tools used in the Calculus of Variations. The study of their equality cases plays a fundamental role in the explicit characterization of minimizers, thus in the computation of optimal constants in geometric and functional inequalities. Although it is usually easy to derive useful necessary conditions for equality cases, the analysis of {\it rigidity of equality cases} (that is, the situation when every set realizing equality in the given symmetrization inequality turns out to be symmetric) is a much subtler issue. Two deep results that provide {\it sufficient conditions} for the rigidity of equality cases are Brothers-Ziemer theorem concerning Schwartz's symmetrization inequality for the Dirichlet-type integral functionals \cite{brothersziemer}, and Chleb{\'{\i}}k-Cianchi-Fusco theorem, concerning Steiner's symmetrization inequality for distributional perimeter \cite{ChlebikCianchiFuscoAnnals05} (see \cite{barchiesicagnettifusco} for an extension of this last result to higher dimensional Steiner's symmetrization). In this paper we introduce a new point of view on rigidity of equality cases, that will allow us to provide {\it characterizations} of rigidity (rather than merely sufficient conditions) in various situations.

We address the case of Ehrhard's symmetrization inequality for Gaussian perimeter. Ehrhard's symmetrization is a powerful device in the analysis of geometric variational problems in the Gauss space, the versatility of  which is well-known in Probability Theory. Rigidity of equality cases for Ehrhard's inequality is an open problem, even at the level of finding sufficient conditions for rigidity. We shall completely solve the rigidity problem, by providing a geometric characterization of rigidity of equality cases. This characterization is formulated in terms of a measure-theoretic notion of connectedness, meaningful in the very general context of Borel sets, and inspired by the notion of indecomposable current adopted in Geometric Measure Theory; see \cite[4.2.25]{FedererBOOK}. Moreover, as we shall explain later on, the ideas and techniques developed here are not specific to the Gaussian setting, and open the possibility to obtain similar results in other frameworks.

The rest of this introduction is organized as follows. In section \ref{section gaussian perimeter} we introduce Gaussian perimeter, together with the Gaussian isoperimetric problem. This important variational problem motivates the notion of Ehrhard's symmetrization, presented in section \ref{section ehrhard}. In sections \ref{section rigidity ehrhard examples} and \ref{section connectedness} we introduce, respectively, the rigidity problem for Ehrhard's inequality, and the measure-theoretic notion of connectedness we shall exploit in its solution. In section \ref{section characterization theorem intro} we state our main result, Theorem \ref{thm gauss}, together with its proper reformulation in the planar setting. Finally, in section \ref{section confronto}, we quickly illustrate the application of our methods to Steiner's symmetrization inequality, referring to the forthcoming paper \cite{ccdpmSTEINER} for a complete discussion of this last problem.

\subsection{Gaussian perimeter and the Gaussian isoperimetric problem}\label{section gaussian perimeter} We introduce our setting. Given a Lebesgue measurable set $E\subset\R^n$, we define its Gaussian volume as
\[
\g_n(E)=\frac1{(2\pi)^{n/2}}\int_E\,\esm{x}\,dx\,.
\]
If $n\ge k\ge 1$, the $k$-dimensional Gaussian-Hausdorff measure of a Borel set $S\subset\R^n$ is
\begin{eqnarray*}
  \H^k_\g(S)=\frac1{(2\pi)^{k/2}}\int_S\,\esm{x}\,d\H^k(x)\,,
\end{eqnarray*}
where $\H^k$ denotes the $k$-dimensional Hausdorff measure on $\R^n$. (In this way, $\g_n=\H^n_\g$ and $\H^k_\g(S)=1$ whenever $S$ is a $k$-dimensional plane containing the origin.) The Gaussian perimeter of an open set $E$ with Lipschitz boundary is then defined as
\begin{equation}
  \label{gaussian perimeter}
  P_\g(E)=\H^{n-1}_\g(\pa E)=\frac1{(2\pi)^{(n-1)/2}}\int_{\pa E}\,\esm{x}\,d\H^{n-1}(x)\,.
\end{equation}
The most basic geometric variational problem in the Gauss space is, of course, the {\it Gaussian isoperimetric problem}, which consists in the minimization of Gaussian perimeter at fixed Gaussian volume. As it turns out, (the only) isoperimetric sets are half-spaces. The Gaussian isoperimetric theorem can be translated into a geometric inequality, with a characterization of equality cases. Indeed, if we define $\Phi:\R\cup\{\pm\infty\}\to[0,1]$ and $\Psi=\Phi^{-1}:[0,1]\to\R\cup\{\pm\infty\}$ by setting
\begin{eqnarray}
  \label{definition of Phi}
\Phi(t)=\frac1{\sqrt{2\pi}}\int_t^\infty\es{s}\,ds\,,\qquad t\in\R\cup\{\pm\infty\}\,,
\end{eqnarray}
then $\Phi(t)$ is the Gaussian volume of an half-space lying at ``signed distance'' $t$ from the origin (more precisely, $\Phi(t)=\g_n(\{x_1>t\})$ for every $t\in\R$). It is thus clear that, given $\l\in(0,1)$, $e^{-\Psi(\l)^2/2}$ is the Gaussian perimeter of any half-space of Gaussian volume $\l$, and thus the {\it Gaussian isoperimetric inequality} takes the form
\begin{equation}
  \label{iso gaussiana}
  P_\g(E)\ge \es{\Psi(\g_n(E))}\,,
\end{equation}
with equality if and only if, up to rotations keeping the origin fixed, $E$ is an half-space with the suitable Gaussian volume, that is
\[
E=\Big\{x\in\R^n:x_n>\Psi(\g_n(E))\Big\}\,.
\]
Inequality \eqref{iso gaussiana} was first proved by Borell \cite{BorellGAUSSIAN} and by Sudakov and Cirel'son \cite{sudakovGAUSSIAN}. Alternative proofs, either of probabilistic \cite{bakryledoux,bobkov,ledoux,barthe} or geometric \cite{ehrhard83,ehrhard84,ehrhard86} nature, have been proposed during the years, although the characterization of equality cases has been obtained only recently, by probabilistic methods, by Carlen and Kerce \cite{carlenkerce}. Finally, a characterization of equality cases, and a stability inequality with sharp decay rate, were obtained in \cite{cianchifuscomaggipratelliGAUSS} building on the symmetrization methods introduced by Ehrhard in \cite{ehrhard83}. In passing, let us mention that the study of stability issues for Gaussian isoperimetry still poses some difficult questions; see \cite{mossel} for some recent progresses in this direction.

Let us notice that the natural domain of validity of the Gaussian isoperimetric inequality, and, in fact, of Ehrhard's symmetrization technique, is much broader than what we have explained so far. Indeed, Gaussian perimeter can be defined for every Lebesgue measurable set $E\subset\R^n$ by setting
\[
P_\g(E)=\H^{n-1}_\g(\pae E)\in[0,\infty]\,.
\]
We recall that the {\it essential boundary} $\pae E$ of $E$ is defined as
\[
\pae E=\R^n\setminus\Big(E^{(0)}\cup E^{(1)}\Big)\,,
\]
where, given $t\in[0,1]$, $E^{(t)}$ denotes the set of points of density $t$ of $E$,
\[
E^{(t)}=\Big\{x\in\R^n:\lim_{r\to 0^+}\frac{\H^n(E\cap B(x,r))}{\om_n\,r^n}=t\Big\}\,,
\]
and $\om_n$ is the volume of the Euclidean unit ball of $\R^n$. If $E$ is an open set with Lipschitz boundary, then we trivially have $\pae E=\pa E$, and thus this new definition of $P_\g(E)$ provides a coherent extension of \eqref{gaussian perimeter}. In general, if $P_\g(E)<\infty$, then $E$ is a set of locally finite perimeter, and in that case $P_\g(E)=\H^{n-1}_\g(\pa^*E)$, where $\pa^*E$ denotes the {\it reduced boundary} of $E$; see section \ref{section sofp} for the terminology introduced here. (More generally, $E$ is of locally finite perimeter if and only if $E$ is of locally finite Gaussian perimeter, that is, if $\H^{n-1}_\g(K\cap\pae E)<\infty$ for every compact set $K\subset\R^n$.) Finally, we notice that, with these definitions in force, inequality \eqref{iso gaussiana} holds true for every Lebesgue measurable set $E\subset\R^n$, and equality holds if and only if, up to rotations around the origin, $E$ is $\H^n$-equivalent to the half-space $\{x\in\R^n:x_n>\Psi(\g_n(E))\}$.

\subsection{Ehrhard's symmetrization}\label{section ehrhard} Ehrhard's approach \cite{ehrhard83,ehrhard84,ehrhard86} to the Gaussian isoperimetric inequality is based on a symmetrization procedure that is the natural analogous in the Gaussian setting of Steiner's symmetrization. The definition goes as follows. We decompose $\R^n$, $n\ge 2$, as the Cartesian product $\R^{n-1}\times\R$, denoting by $\p:\R^n\to\R^{n-1}$ and $\q:\R^n\to\R$ the horizontal and vertical projections, so that $x=(\p x,\q x)$, $\p x=(x_1,...,x_{n-1})$, and $\q x=x_n$ for every $x\in\R^n$. Given a set $E\subset\R^n$, we denote by $E_z$ its vertical section with respect to $z\in\R^{n-1}$, that is, we set
\begin{equation}
  \label{vertical section}
E_z=\Big\{t\in\R:(z,t)\in E\Big\}\,,\qquad z\in\R^{n-1}\,.
\end{equation}
Given a Lebesgue measurable function $v:\R^{n-1}\to[0,1]$, we say that $E$ is $v$-distributed provided $\H^1_\g(E_z)=v(z)$ for $\H^{n-1}$-a.e. $z\in\R^{n-1}$, and we set
\begin{equation}
  \label{Fv gaussiano}
  F[v]=\Big\{x\in\R^n:\q x>\Psi(v(\p x))\Big\}\,,
\end{equation}
for the $v$-distributed set whose vertical sections are positive half-lines in the $x_n$-direction. If $E$ is a $v$-distributed set, then the {\it Ehrhard symmetral} $E^s$ of $E$ is defined as
\[
E^s=F[v]\,;
\]
see Figure \ref{fig ehrhard}.
\begin{figure}
  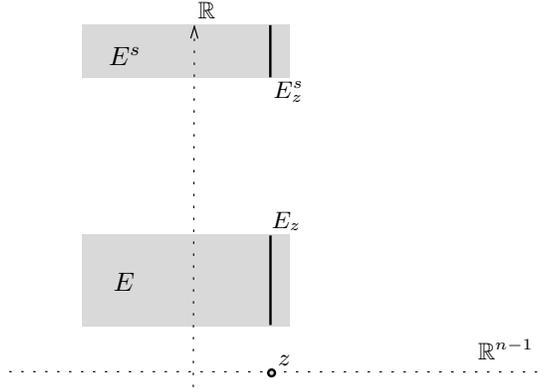\caption{{\small Ehrhard's symmetrization amounts in replacing the vertical sections of a set with vertical half-lines with same Gaussian length and positive orientation. Note that, in this picture, the non-trivial vertical sections $E_z$ of $E$ are constantly equal to a same segment. The corresponding sections $E^s_{z}$ of $E^s$ are thus constantly equal to the half-line of Gaussian length $\H^1_\g(E_z)$.}}\label{fig ehrhard}
\end{figure}
By Fubini's theorem, Gaussian volume is preserved under Ehrhard's symmetrization, that is, $\g_n(E)=\g_n(E^s)$. At the same time, Gaussian perimeter is decreased under Ehrhard's symmetrization. Precisely, if there exists a $v$-distributed set of finite Gaussian perimeter $E$, then $F[v]$ is of locally finite perimeter, and {\it Ehrhard's inequality}
\begin{equation}
  \label{ehrhard inequality}
  P_\g(E)\ge P_\g(F[v])\,,
\end{equation}
holds true. A proof of these facts based on the coarea formula is presented in \cite[Section 4.1]{cianchifuscomaggipratelliGAUSS}. This approach also leads to the following theorem concerning equality cases, that will play an important role in the sequel. (Here, $\nu_E$ denotes the measure-theoretic outer unit normal to a set of locally finite perimeter $E$; see section \ref{section sofp}.)

\begin{theoremletter}\label{thm cfmp1}
  If $E\subset\R^n$ is a set of locally finite perimeter with $P_\g(E)=P_\g(E^s)$, then
  \begin{equation}\label{necessary conditions gauss}
    \mbox{$E_z$ is $\H^1$-equivalent to a half-line for $\H^{n-1}$-a.e. $z\in\R^{n-1}$.}
  \end{equation}
  Moreover, if $E$ satisfies \eqref{necessary conditions gauss}, and $\pa^*E$ has no ``vertical parts'', that is, if
  \begin{equation}
    \label{no vertical parts on G}
      \H^{n-1}\Big(\Big\{x\in\pa^*E:\q\nu_{E}(x)=0\Big\}\Big)=0\,,
  \end{equation}
  then $P_\g(E)=P_\g(E^s)$.
\end{theoremletter}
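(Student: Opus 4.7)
The plan is to combine a coarea decomposition of $P_\g(E)$ with the one-dimensional Gaussian isoperimetric inequality, through a refined comparison between $E$ and $E^s$. For $\H^{n-1}$-a.e.\ $z\in\R^{n-1}$, the BV slicing theorem gives that $E_z$ is a 1D set of finite perimeter with $\H^1_\g(E_z)=v(z)$, and the area formula for the projection $\p\colon\partial^*E\to\R^{n-1}$, whose tangential Jacobian equals $|\q\nu_E|$, yields
\[
\int_{\partial^*E}\esm{x}\,|\q\nu_E(x)|\,d\H^{n-1}(x)=\int_{\R^{n-1}}e^{-|z|^2/2}\,P_\g(E_z)\,dz.
\]
Dividing by $(2\pi)^{(n-1)/2}$ and using $|\q\nu_E|\le 1$ gives the crude bound $P_\g(E)\ge\int_{\R^{n-1}}P_\g(E_z)\,d\g_{n-1}(z)$. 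The one-dimensional version of \eqref{iso gaussiana} then yields $P_\g(E_z)\ge e^{-\Psi(v(z))^2/2}=P_\g(E^s_z)$, with equality iff $E_z$ is $\H^1$-equivalent to a half-line.

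The heart of the argument is a refined inequality
\[
P_\g(E)-P_\g(E^s)\;\ge\;\int_{\R^{n-1}}(P_\g(E_z)-P_\g(E^s_z))\,d\g_{n-1}(z)\;\ge\;0,
\]
which upgrades Ehrhard's inequality \eqref{ehrhard inequality}. I would derive it by writing both $P_\g(E)$ and $P_\g(E^s)$ via the above coarea identity, splitting each into a slicewise term plus an excess accounting for the non-horizontal portion of $\partial^*$, and then showing that the excess for $E^s$ is the minimum among all $v$-distributed competitors. The excess for $E^s$ concentrates on the jump set of $v$ (producing vertical parts in $\partial^*E^s$) plus a tangential tilt $\sqrt{1+|\nabla(\Psi\circ v)|^2}-1$ on the smooth part of $v$. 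Granted the refined inequality, the first statement is immediate: $P_\g(E)=P_\g(E^s)$ forces the right-hand side to vanish, so $P_\g(E_z)=P_\g(E^s_z)$ for $\H^{n-1}$-a.e.\ $z$, and the one-dimensional equality case gives that $E_z$ is $\H^1$-equivalent to a half-line.

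For the converse in the second statement, if $E_z$ is a.e.\ a half-line and $\partial^*E$ has no vertical parts in the sense \eqref{no vertical parts on G}, then $E$ decomposes as a ``signed graph'' over two subregions of $\R^{n-1}$: on the set where each $E_z$ points upward, $\partial^*E$ coincides with the graph of $\Psi\circ v$, and on the complementary set with the graph of $-\Psi\circ v$. The absence of vertical parts forces the transitions between these orientations to occur only on $\{v\in\{0,1\}\}$, where $\Psi\circ v$ admits no vertical jump either, and the graph-area contributions then match those in $P_\g(E^s)$ exactly, yielding $P_\g(E)=P_\g(E^s)$. The principal technical obstacle is the rigorous derivation of the refined inequality for general BV $v$: it requires a careful decomposition of $\partial^*E$, a precise identification of the vertical parts of $E^s$ with the jump (and Cantor) part of $v$, and a proof that the resulting excess is minimal among $v$-distributed sets --- the hard analytic content developed in \cite{cianchifuscomaggipratelliGAUSS}.
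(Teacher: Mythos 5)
The paper itself gives no proof of Theorem~\ref{thm cfmp1}: it is stated as an external result (a ``theoremletter'') and the reader is referred to \cite[Section~4.1]{cianchifuscomaggipratelliGAUSS} for the coarea-based argument. So there is no internal proof for your sketch to be measured against; what can be assessed is whether your outline would, on its own, close the argument.

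Your setup (slicing $\partial^*E$ via the tangential Jacobian $|\q\nu_E|$, the resulting bound $P_\g(E)\ge\int_{\R^{n-1}}P_\g(E_z)\,d\g_{n-1}(z)$, and the one-dimensional Gaussian isoperimetric inequality with its equality case) is correct and is indeed the starting point of the coarea approach. The genuine gap is the ``refined inequality''
\[
P_\g(E)-\int_{\R^{n-1}}P_\g(E_z)\,d\g_{n-1}(z)\;\ge\;P_\g(E^s)-\int_{\R^{n-1}}P_\g(E^s_z)\,d\g_{n-1}(z),
\]
i.e.\ that the ``vertical excess'' $\frac{1}{(2\pi)^{(n-1)/2}}\int_{\partial^*E}\esm{x}\,(1-|\q\nu_E|)\,d\H^{n-1}$ is minimised among $v$-distributed competitors by $E^s$. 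This is strictly stronger than either Ehrhard's inequality or the crude slicing bound, it is not an obvious consequence of them (the naive chain of inequalities does not close because $P_\g(E^s)$ is in general strictly larger than $\int P_\g(E^s_z)\,d\g_{n-1}$), and you do not establish it --- you explicitly defer it to \cite{cianchifuscomaggipratelliGAUSS}. Since this is precisely where the first assertion of the theorem is decided, the proposal as written does not constitute a proof; it reduces the statement to an unproved claim, which in turn is handed back to the very reference the paper already cites. In particular, one should not present the refined inequality as if it followed from a formal rearrangement of the coarea identity: controlling where the lost mass $1-|\q\nu_E|$ can sit relative to the jump and Cantor parts of $\Psi\circ v$ is the hard analytic content.

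For the converse direction, a cleaner and fully rigorous route than the ``signed graph'' matching you sketch is available and worth recording. Once $E_z$ is a half-line for a.e.\ $z$ and $\partial^*E$ has no vertical parts, slicing gives $P_\g(E;W\times\R)=\int_W P_\g(E_z)\,d\g_{n-1}=\int_W P_\g(E^s_z)\,d\g_{n-1}\le P_\g(E^s;W\times\R)$ for every Borel $W\subset\R^{n-1}$; combined with the localized Ehrhard inequality $P_\g(E;W\times\R)\ge P_\g(E^s;W\times\R)$ (which \emph{is} what \cite{cianchifuscomaggipratelliGAUSS} proves directly), this forces equality for every $W$, hence $P_\g(E)=P_\g(E^s)$. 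Your graph-matching argument can be made to work but requires justifying, in addition, that the interfaces between the upward and downward regions contribute no perimeter and that $\partial^*E^s$ itself has no vertical parts under the stated hypotheses; neither is automatic and both are sidestepped by the localized-inequality argument.
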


\subsection{The rigidity problem for Ehrhard's inequality}\label{section rigidity ehrhard examples} We now turn to the rigidity problem related to the Ehrhard inequality. Given $v:\R^{n-1}\to[0,1]$ such that
\[
\M(v)=\Big\{E\subset\R^n:\mbox{$E$ is $v$-distributed and $P_\g(E)=P_\g(F[v])<\infty$}\Big\}\,,
\]
is non-empty, we ask for necessary and sufficient conditions for having that
\begin{equation}
  \label{rigidity gauss}
  \mbox{$E\in\M(v)$ if and only if either $\H^n(E\Delta F[v])=0$ or $\H^n(E\Delta \,g(F[v]))=0$}\,,
\end{equation}
where $g:\R^n\to\R^n$ denotes the reflection with respect to $\R^{n-1}$, that is
\[
g(x)=(\p x,-\q x)\,,\qquad x\in\R^n\,.
\]
Simple examples show that the rigidity condition \eqref{rigidity gauss} may fail if we allow $v$ to take the values $0$ or $1$
\begin{figure}
  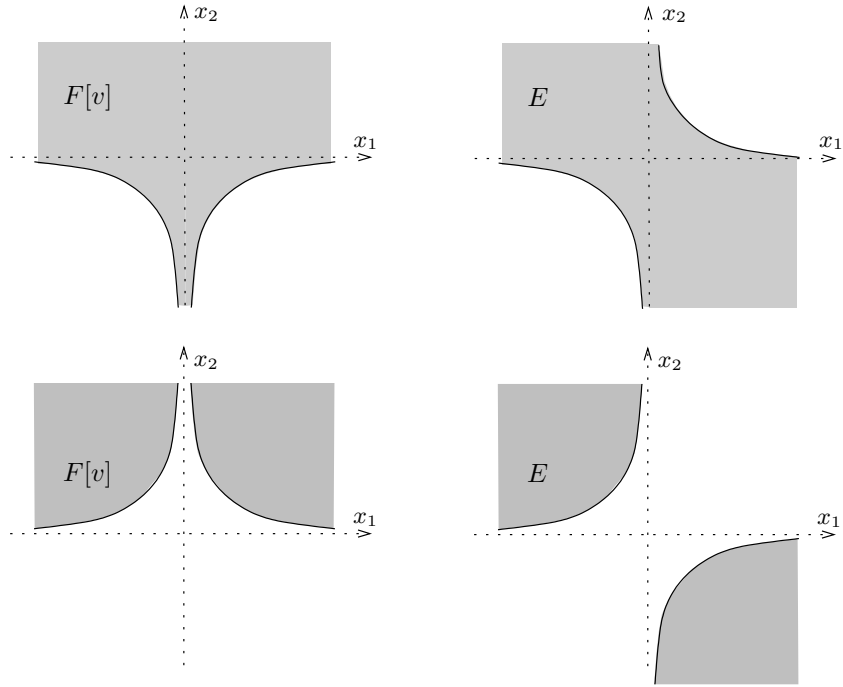\caption{{\small In the first example (two top pictures), the function $v:\R\to[0,1]$ takes the value $1$ at the origin. The correspoding set $F[v]$ is connected and there exists $E\in\M(v)$ such that $\H^2(E\Delta F)=\H^2(E\Delta g(F))=\infty$. In the second example (two bottom pictures), we observe the same features in the case of a function $v$ that takes the value $0$ at the origin.}}\label{fig examples}
\end{figure}
(see Figure \ref{fig examples}) and suggest that a reasonable sufficient condition for rigidity could amount in ruling out this possibility. At the same time, $v$ may take the values $0$ and/or $1$ and still rigidity may hold:
\begin{figure}
  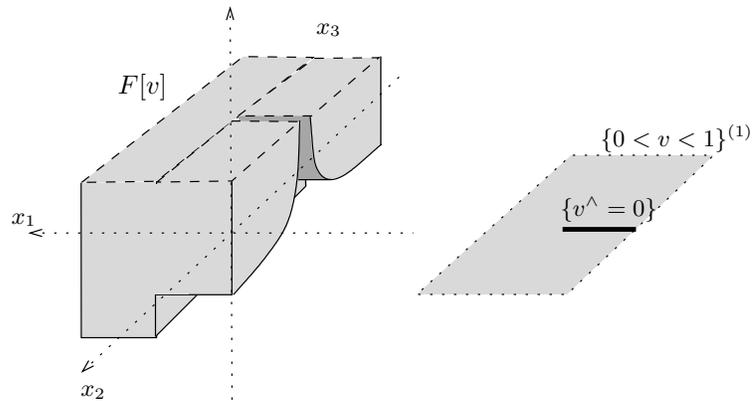\caption{{\small In this example, $\{v^\wedge=0\}$ is a segment lying inside $\{0<v<1\}^{(1)}$. Nevertheless, we have rigidity of equality cases, as a vertical reflection of $F[v]$ on any proper non-empty subset of $\{0<v<1\}$ will create extra Gaussian perimeter.}}\label{fig 01}
\end{figure}
an example is depicted in Figure \ref{fig 01}. Thus, this plausible sufficient condition would be far from being also necessary. As it turns out, one needs to introduce some proper notions of connectedness in order to formulate conditions that effectively characterize rigidity.

Before entering into this, let us notice how the need for working in a measure-theoretic framework arises naturally in here. Indeed, if $w=v$ $\H^{n-1}$-a.e. on $\R^{n-1}$, then $F[v]$ and $F[w]$ are $\H^n$-equivalent (thus $P_\g(F[v])=P_\g(F[w])\in[0,\infty]$), a set $E\subset\R^n$ is $v$-distributed if and only if it is $w$-distributed, and $\M(v)=\M(w)$. In particular, a condition like ``$v$ takes the value $0$ or $1$ on a given set $S$'' has no meaning in our problem if $\H^{n-1}(S)=0$. We shall rule out these ambiguities by exploiting the notions of approximate upper and lower limits of a Lebesgue measurable function $f:\R^m\to\R$. Precisely, the {\it approximate upper limit} $f^\vee(x)$ and the {\it approximate lower limit} $f^\wedge(x)$ of $f$ at $x\in\R^m$ are defined by setting
\begin{eqnarray}
  \label{def fvee}
  f^\vee(x)=\inf\Big\{t\in\R: x\in \{f>t\}^{(0)}\Big\}\,,
  \\
  \label{def fwedge}
  f^\wedge(x)=\sup\Big\{t\in\R: x\in \{f<t\}^{(0)}\Big\}\,.
\end{eqnarray}
In this way, $f^\vee$ and $f^\wedge$ are defined at {\it every} point of $\R^m$, with values in $\R\cup\{\pm\infty\}$, in such a way that, if $f_1=f_2$ $\H^m$-a.e. on $\R^m$, then $f_1^\vee=f_2^\vee$ and $f_1^\wedge=f_2^\wedge$ {\it everywhere} on $\R^m$. Moreover, $f^\vee$ and $f^\wedge$ turn out to be both Borel functions on $\R^m$; see section \ref{section approximate limits}.

\subsection{A measure-theoretic notion of connectedness}\label{section connectedness} Given a open set $G$ and an hypersurface $K$ in $\R^m$, the intuitive idea of what does it mean for $K$ to disconnect $G$ is pretty clear: one simply expects $K$ to be the relative boundary inside $G$ of two non-trivial, disjoint open sets $G_+$ and $G_-$ such that $G_+\cup G_-=G$. In this section, we precisely define what it means for a Borel set $K\subset\R^m$ to ``essentially'' disconnect a Borel set $G\subset\R^m$, in such a way this definition is stable under modifications of $K$ by $\H^{m-1}$-negligible sets, and of $G$ by $\H^m$-negligible sets.

In order to introduce our definition, let us first recall the measure-theoretic notion of connectedness used in the theory of sets of finite perimeter. A set of finite perimeter $G\subset\R^m$ is indecomposable (see \cite[Definition 2.11]{dolzmannmu} or \cite[Section 4]{ambrosiocaselles}), if for every non-trivial partition of $G$ into sets of finite perimeter $\{G_+,G_-\}$ modulo $\H^m$,
\begin{equation}
  \label{non trivial borel partition}
  \H^m(G_+\cap G_-)=0\,,\qquad  \H^m(G\Delta(G_+\cup G_-))=0\,,\qquad \H^m(G_+)\,\H^m(G_-)>0\,,
\end{equation}
we have that $P(G)<P(G_+)+P(G_-)$, where $P(G)=\H^{m-1}(\pa^*G)=\H^{m-1}(\pae G)$. (The indecomposability of $G$ in this sense is equivalent to the indecomposability in the sense of \cite[4.2.25]{FedererBOOK} of the $m$-dimensional integer current on $\R^m$ canonically associated to $G$.)  More generally, we can say that a set of {\it locally} finite perimeter $G\subset\R^m$ is indecomposable if there exists $r_0>0$ such that $P(G;B_r)<P(G_+;B_r)+P(G_-;B_r)$ for every $r>r_0$ and for every non-trivial partition of $G$ into sets of locally finite perimeter $\{G_+,G_-\}$. Indecomposability plays for sets of finite perimeter the same role that connectedness plays for open sets; see, for example, the various results supporting this intuition collected in \cite[Section 4]{ambrosiocaselles}. At variance with topological connectedness, indecomposability has however the following important stability property: if $G_1$ is an indecomposable set and $G_2$ is $\H^m$-equivalent to $G_1$, then $G_2$ is an indecomposable set too.

We now want to extend the notion of indecomposability to arbitrary Borel sets. Indeed, a pretty obvious necessary condition for rigidity in Ehrhard's inequality should be the ``connectedness'' of $\{0<v<1\}$. Of course, for the reasons explained so far, topological connectedness is not suitable here. Moreover, the Borel set $\{0<v<1\}$ defined by $v\in BV_{loc}(\R^{n-1};[0,1])$ may fail to be of locally finite perimeter (see Example \ref{example G per finito}), and in that case we may not exploit indecomposability. Finally, we shall in fact need to give a precise meaning to the idea that a Borel set ``disconnects'' another Borel set. This is achieved as follows. Given two Borel sets $K$ and $G$ in $\R^m$, $m\ge 1$, we say that $K$ {\it essentially disconnects} $G$ if there exists a non-trivial {\it Borel} partition $\{G_+,G_-\}$ of $G$ modulo $\H^m$ with
\begin{equation}
  \label{K disconnects G}
  \H^{m-1}\Big(\Big(G^{(1)}\cap\pae  G_+\cap\pae G_-\Big)\setminus K\Big)=0\,.
\end{equation}
Of course, we say that $K$ {\it does not essentially disconnect} $G$ if for every non-trivial Borel partition $\{G_+,G_-\}$ of $G$ modulo $\H^m$  we have
\begin{equation}
  \label{K does not disconnect G}
  \H^{m-1}\Big(\Big(G^{(1)}\cap\pae  G_+\cap\pae G_-\Big)\setminus K \Big)>0\,.
\end{equation}
Finally, we say that $G$ is {\it essentially connected} if $\emptyset$ does not essentially disconnect $G$. An example is depicted in
\begin{figure}
  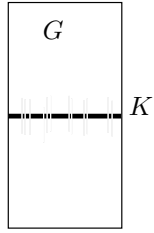\caption{{\small If $G=[0,1]\times[-1,1]\subset\R^2$ and $K\subset \ell=[0,1]\times\{0\}$, then $K$ essentially disconnects $G$ if and only if $\H^1(\ell\setminus K)=0$. Thus, the rational numbers in $[0,1]$ do not essentially disconnect $G$, while the irrational numbers in $[0,1]$ essentially disconnect $G$.}}\label{fig essentially}
\end{figure}
Figure \ref{fig essentially}.

\begin{remark}\label{remark essential connected}
  {\rm If $\H^m(G_1\Delta G_2)=0$, then $G_1^{(1)}=G_2^{(1)}$: thus, $K$ essentially disconnects $G_1$ if and only if $K$ essentially disconnects $G_2$. Similarly, if $\H^{m-1}(K_1\Delta K_2)=0$, then $K_1$ essentially disconnects $G$ if and only if $K_2$ essentially disconnects $G$.}
\end{remark}

\begin{remark}\label{remark ess equal ind}
  {\rm We shall prove in Remark \ref{remark indecomposable 2} that a set of locally finite perimeter $G\subset\R^m$ is indecomposable if and only if
  $\H^{m-1}(G^{(1)}\cap\pae G_+\cap\pae G_-)>0$ for every non-trivial {\it Borel} partition $\{G_+,G_-\}$ of $G$ modulo $\H^m$. Therefore, a set of locally finite perimeter is indecomposable if and only if it is essentially connected. At the same, the notion of essential connectedness makes sense on arbitrary Borel sets. Actually, by replacing $G^{(1)}$ with $(\R^m\setminus G)^{(0)}$ in the definition of $\pae G$, we define a notion of connectedness that should retain reasonable properties even when $G$ is a non-necessarily measurable set in $\R^m$.}
\end{remark}

\subsection{Characterizations of rigidity for Ehrhard's inequality}\label{section characterization theorem intro} We are finally into the position of stating our characterization of rigidity of equality cases in Ehrhard's inequality.

\begin{theorem}\label{thm gauss}
  If $v:\R^{n-1}\to[0,1]$ is a Lebesgue measurable function with $P_\g(F[v])<\infty$, then the following two statements are equivalent:
  \begin{enumerate}
    \item[(i)] if $E\in\M(v)$, then either $\H^n(E\Delta F[v])=0$, or $\H^n(E\Delta g(F[v]))=0$;
    \item[(ii)] the set $\{v^\wedge=0\}\cup\{v^\vee=1\}$ does not essentially disconnect $\{0<v<1\}$.
  \end{enumerate}
\end{theorem}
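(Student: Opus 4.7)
\textbf{Reformulation via Theorem \ref{thm cfmp1}.} My first step is to translate statement (i) into a statement about Borel partitions of $\{0<v<1\}$. Given $E\in\M(v)$, the identity $P_\g(E)=P_\g(F[v])=P_\g(E^s)$ together with the first part of Theorem \ref{thm cfmp1} forces $E_z$ to be $\H^1$-equivalent to a half-line for $\H^{n-1}$-a.e.\ $z$. Since $\H^1_\g(E_z)=v(z)$, each such section is, up to a $\H^1$-null set, either $(\Psi(v(z)),+\infty)$ (assign $z$ to $G^E_+$) or $(-\infty,-\Psi(v(z)))$ (assign $z$ to $G^E_-$). Conversely, any Borel partition $\{G_+,G_-\}$ of $\{0<v<1\}$ modulo $\H^{n-1}$ yields a canonical $v$-distributed set $E[G_+,G_-]$, obtained by gluing the relevant halves of $F[v]$ and $g(F[v])$ over $G_\pm$ and extending trivially where $v\in\{0,1\}$ (on which $F[v]$ and $g(F[v])$ agree). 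Statement (i) is thus equivalent to requiring that $\{G^E_+,G^E_-\}$ be $\H^{n-1}$-trivial for every $E\in\M(v)$.

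\textbf{Proof of (i) $\Rightarrow$ (ii).} I argue by contrapositive. Assume $\{v^\wedge=0\}\cup\{v^\vee=1\}$ essentially disconnects $\{0<v<1\}$ via a non-trivial Borel partition $\{G_+,G_-\}$ satisfying \eqref{K disconnects G}, and set $E=E[G_+,G_-]$. This $E$ is visibly not $\H^n$-equivalent to $F[v]$ or to $g(F[v])$. To show $E\in\M(v)$, I invoke the second part of Theorem \ref{thm cfmp1} and verify \eqref{no vertical parts on G}. The reduced boundary $\pa^*E$ splits into graph portions over $G_+$ and $G_-$ (almost nowhere vertical, with Gaussian $(n-1)$-measure precisely matching the corresponding portions of $\pa^*F[v]$ via the isometry $g$), together with a vertical portion that a Fubini/slicing argument identifies as lying above $\pae G_+\cap\pae G_-$. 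By \eqref{K disconnects G}, this projection is $\H^{n-2}$-contained in $\{v^\wedge=0\}\cup\{v^\vee=1\}$, and at such points the connecting vertical segment degenerates: its Gaussian $1$-length is comparable to $\min(v,1-v)$, hence vanishes as $v^\wedge\to 0$ or $v^\vee\to 1$. Thus \eqref{no vertical parts on G} holds, whence $P_\g(E)=P_\g(F[v])$ and (i) fails.

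\textbf{Proof of (ii) $\Rightarrow$ (i).} Fix $E\in\M(v)$ and the associated partition $\{G_+,G_-\}$; assume by contradiction that it is non-trivial. The plan is to prove a quantitative perimeter identity of the form
\[
P_\g(E)\;=\;P_\g(F[v])\;+\;\int_{\{0<v<1\}^{(1)}\cap\pae G_+\cap\pae G_-}\eta\bigl(v^\wedge(z),v^\vee(z)\bigr)\,d\H^{n-2}_\g(z),
\]
where $\eta$ is a non-negative function of the approximate lower and upper limits of $v$ that vanishes precisely when its first argument is $0$ or its second argument is $1$. Hypothesis (ii) guarantees that the set $(\{0<v<1\}^{(1)}\cap\pae G_+\cap\pae G_-)\setminus(\{v^\wedge=0\}\cup\{v^\vee=1\})$ has positive $\H^{n-2}$-measure, so the integral is strictly positive. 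This contradicts $E\in\M(v)$ and forces the partition to be trivial, proving (i).

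\textbf{Main obstacle.} The delicate technical step is establishing the slicing identity above, which links the $(n-2)$-dimensional essential boundary $\pae G_+\cap\pae G_-$ in $\R^{n-1}$ to the portion of $\pa^*E$ with vertical outer normal in $\R^n$, together with the explicit, strictly positive form of $\eta$ away from $\{v^\wedge=0\}\cup\{v^\vee=1\}$. This requires extending the fine blow-up and slicing theory for sets of finite perimeter so that the Borel representatives $v^\wedge$ and $v^\vee$ (rather than arbitrary Lebesgue representatives) play the role of pointwise values at interface points, so that the essential disconnection hypothesis translates faithfully into a zero-perimeter statement. Once this machinery is in place, both implications reduce transparently to the measure-theoretic definition of essential connectedness.
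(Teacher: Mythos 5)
Your high-level strategy — recasting (i) via Theorem~\ref{thm cfmp1} as a statement about Borel partitions $\{G_+,G_-\}$ of $\{0<v<1\}$, and then relating rigidity to the interface $\pae G_+\cap\pae G_-$ — is the same as the paper's. But both implications have genuine gaps in the technical core.

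For (i)~$\Rightarrow$~(ii), you claim that the glued set $E=E[G_+,G_-]$ satisfies \eqref{no vertical parts on G}, so that the second part of Theorem~\ref{thm cfmp1} gives $P_\g(E)=P_\g(F[v])$. This is false as soon as $v$ has jump points. By Proposition~\ref{proposition fine bv}, the reduced boundary $\pa^*F[v]$ contains the ``vertical graph'' $\Gv_f$ over the jump set $S_f$ of $f=\Psi\circ v$, and $E$ inherits those vertical portions over $S_f\cap G_+^{(1)}$ and $S_f\cap G_-^{(1)}$, which can have positive Gaussian $\H^{n-1}$-measure. So your premise ``graph portions over $G_+$ and $G_-$, almost nowhere vertical'' is wrong, \eqref{no vertical parts on G} fails, and Theorem~\ref{thm cfmp1} cannot be invoked. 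The paper goes a different, harder way: it first shows $E$ has locally finite perimeter via Federer's criterion (which already requires delicate slice estimates near the interface), and then proves $P_\g(E;W\times\R)=P_\g(F[v];W\times\R)$ by analyzing $(\pa^JE)_z$ pointwise and showing it is $\H^1$-contained in $(\pae F[v])_z$ or its reflection, never touching \eqref{no vertical parts on G}. Separately, even the piece of your argument about the ``extra'' vertical contribution over $\pae G_+\cap\pae G_-$ cannot be finished by a Fubini argument, because that set need not be countably $\H^{n-2}$-rectifiable (see Example~\ref{remark koch}); the paper uses the non-rectifiable Eilenberg-type bound \cite[2.10.45]{FedererBOOK} there.

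For (ii)~$\Rightarrow$~(i), you correctly identify the ``main obstacle'' but your proposed quantitative identity, with the integral taken over the \emph{entire} interface $\{0<v<1\}^{(1)}\cap\pae G_+\cap\pae G_-$, is not established and is blocked by exactly the rectifiability problem you mention: Example~\ref{remark koch} shows this interface can be a von Koch curve even when $v$ is Lipschitz, so the slicing identity and the $\H^{n-2}_\g$ integral are not available there. The paper circumvents this by introducing the auxiliary $BV_{loc}$ functions $u_k=(k-|f|)1_{\{|f|<k\}}(1_{G_+}-1_{G_-})$, using them (via Lemma~\ref{lemma fico}) to carve out a \emph{locally $\H^{n-2}$-rectifiable} subset $\S_k\subset S_{u_k}$ of the interface with $\H^{n-2}(\S_k)>0$ for $k$ large, computing $P_\g(E;\S_k\times\R)$ and $P_\g(F[v];\S_k\times\R)$ by slicing over $\S_k$ only, and concluding via the local Ehrhard inequality $P_\g(E;A\times\R)\ge P_\g(F[v];A\times\R)$. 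Your outline would need this rectifiable-carving device to become a proof; as written, the key estimate is asserted but not obtained.
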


\begin{remark}\label{remark approx limits}
  {\rm If $v=w$ $\H^{n-1}$-a.e. on $\R^{n-1}$, then $v^\vee=w^\vee$, and $v^\wedge=w^\wedge$. In particular, the characterization of rigidity (ii) is independent of the considered representative of $v$.}
\end{remark}

\begin{remark}
  {\rm The assumption $P_\g(F[v])<\infty$ is of course the minimal hypothesis under which it makes sense to consider the rigidity problem. As we shall see in Proposition \ref{prop f GBV}, it implies a very minimal amount of regularity on $v$. Precisely, it implies that the Lebesgue measurable function $\Psi\circ v:\R^{n-1}\to\R\cup\{\pm\infty\}$ is an extended real valued function of generalized bounded variation; see section \ref{subsection epigraphs}.}
\end{remark}

Despite the geometric clarity of the characterization of rigidity presented in Theorem \ref{thm gauss}, its proof is actually quite delicate. We shall explain the reasons for this in the course of its proof, that is presented in section \ref{section gauss}. For the moment, let us just mention the following reformulation of Theorem \ref{thm gauss} in the planar case $n=2$.

\begin{theorem}\label{thm gauss R2}
  If $v:\R^{n-1}\to[0,1]$ is a Lebesgue measurable function with $P_\g(F[v])<\infty$, then the following two statements are equivalent:  \begin{enumerate}
    \item[(i)] if $E\in\M(v)$, then either $\H^2(E\Delta F[v])=0$, or $\H^2(E\Delta g(F[v]))=0$;
    \item[(ii)] $\{0<v<1\}$ is $\H^1$-equivalent to an open interval $I$, with $v^\wedge>0$ and $v^\vee<1$ on $I$.
  \end{enumerate}
\end{theorem}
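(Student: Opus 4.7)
The plan is to deduce Theorem \ref{thm gauss R2} from Theorem \ref{thm gauss} by proving, in the one-dimensional base space, the equivalence between condition (ii) of Theorem \ref{thm gauss R2} and condition (ii) of Theorem \ref{thm gauss}, i.e., that $K:=\{v^\wedge=0\}\cup\{v^\vee=1\}$ does not essentially disconnect $G:=\{0<v<1\}$. The main tool is a simple one-dimensional disconnection lemma: for any open interval $I=(a,b)\subset\R$ and any nontrivial Borel partition $\{G_+,G_-\}$ of $I$ modulo $\H^1$, the set $I\cap\pae G_+\cap\pae G_-$ is nonempty. Indeed, this set coincides with $I\cap\pae G_+$, and if it were empty then $1_{G_+\cap I}$ would have vanishing distributional derivative on $I$, hence be constant there, contradicting the nontriviality of the partition.

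For the implication from (ii) of Theorem \ref{thm gauss R2} to (ii) of Theorem \ref{thm gauss}, suppose $G$ is $\H^1$-equivalent to an open interval $I$ with $v^\wedge>0$ and $v^\vee<1$ on $I$. Then Remark \ref{remark essential connected} gives $G^{(1)}=I^{(1)}=I$, while the hypothesis on $v^\vee$ and $v^\wedge$ yields $K\cap I=\emptyset$. For any nontrivial Borel partition $\{G_+,G_-\}$ of $G$ modulo $\H^1$, the disconnection lemma supplies a point in $I\cap\pae G_+\cap\pae G_- = G^{(1)}\cap\pae G_+\cap\pae G_-$, and such a point lies outside $K$. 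Hence $K$ does not essentially disconnect $G$.

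For the converse, assume (ii) of Theorem \ref{thm gauss R2} fails and distinguish two cases. In \emph{Case A}, $G$ is not $\H^1$-equivalent to any open interval: using $P_\g(F[v])<\infty$ and Proposition \ref{prop f GBV}, $\Psi\circ v$ is GBV on $\R$, and a $1$D structural argument gives that $G$ is $\H^1$-equivalent to a countable disjoint union of open intervals, at least two of which are separated by a gap of positive $\H^1$-measure. Collecting components on either side of such a gap produces a nontrivial Borel partition $\{G_+,G_-\}$ with $\pae G_+\cap\pae G_-=\emptyset$, so $(G^{(1)}\cap\pae G_+\cap\pae G_-)\setminus K=\emptyset$ and $K$ essentially disconnects $G$. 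In \emph{Case B}, $G$ is $\H^1$-equivalent to an open interval $I=(a,b)$ but there exists $x_0\in I$ with $v^\wedge(x_0)=0$ or $v^\vee(x_0)=1$, so that $x_0\in K$; the partition $G_+=G\cap(-\infty,x_0)$, $G_-=G\cap(x_0,+\infty)$ is nontrivial since $a<x_0<b$, and satisfies $G^{(1)}\cap\pae G_+\cap\pae G_-=\{x_0\}\subset K$, again showing that $K$ essentially disconnects $G$.

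The principal obstacle is the structural claim invoked in Case A: extracting from finiteness of $P_\g(F[v])$ that the Borel set $\{0<v<1\}$ is $\H^1$-equivalent to a countable disjoint union of open intervals. This requires carefully transferring the GBV regularity of $\Psi\circ v$---delicate because $\Psi$ takes values $\pm\infty$ at $v\in\{0,1\}$---into a density-theoretic description of $\{0<v<1\}$, so that its essential connected components in the sense of the paper are precisely open intervals, rather than more exotic Borel configurations.
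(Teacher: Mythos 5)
Your first paragraph (the implication $(ii)_{\text{R2}}\Rightarrow(ii)_{\text{gauss}}$, which the paper dismisses as ``trivial'') is correct: after replacing $G_\pm$ by their $\H^1$-equivalent intersections with $I$, the one-dimensional disconnection lemma applies, and $K\cap I=\emptyset$ finishes the argument. Case B of your converse is also essentially the paper's argument, built on the partition $G_+=G\cap(x_0,\infty)$, $G_-=G\cap(-\infty,x_0)$.

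Case A, however, contains a genuine gap, which you flag yourself. The claim that $G=\{0<v<1\}$ is $\H^1$-equivalent to a countable disjoint union of open intervals is not proved; deriving it from $\Psi\circ v\in GBV_*(\R)$ requires writing $G=\bigcup_k\{|\Psi\circ v|<M_k\}$ with each truncation level set of locally finite perimeter (hence $\H^1$-equivalent to a locally finite union of open intervals) and passing to the union. This is fillable but not immediate, and --- more importantly --- unnecessary. The paper's step one proves directly that for an \emph{arbitrary} Borel set $G\subset\R$, essential connectedness forces $G^{(1)}$ to be an interval, with no regularity input whatsoever: given $a,b\in G^{(1)}$ and $c\in(a,b)$ with $c\notin G^{(1)}$, the same partition $G_+=G\cap(c,\infty)$, $G_-=G\cap(-\infty,c)$ that you use in Case B is non-trivial (as $a\in G_-^{(1)}$, $b\in G_+^{(1)}$) and satisfies $G^{(1)}\cap\pae G_+\cap\pae G_- = G^{(1)}\cap\{c\}=\emptyset$, so already $\emptyset$ (and a fortiori $K$) essentially disconnects $G$. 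Your two cases thus collapse into a single construction, with $c$ the offending point, and neither $P_\g(F[v])<\infty$ nor Proposition~\ref{prop f GBV} is needed in this step.
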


\begin{remark}\label{remark sufficient}
  {\rm A natural problem is that of characterizing rigidity, or otherwise providing sufficient conditions for rigidity, in terms of indecomposability properties of $F[v]$. As shown by the examples in Figure \ref{fig examples}, it is not enough to ask that either $F[v]$ or $\R^n\setminus F[v]$ are indecomposable sets. As it turns out, if we are in the planar case, and we ask that {\it both} $F[v]$ and $\R^n\setminus F[v]$ are indecomposable sets, then rigidity holds; see Theorem \ref{thm gino}. This last condition is not necessary for rigidity in the planar case, see Figure \ref{fig spikes},
\begin{figure}
    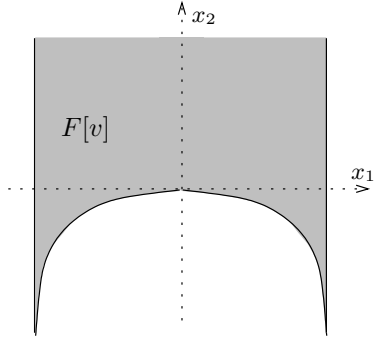\caption{{\small Asking that both $F[v]$ and $\R^n\setminus F[v]$ are indecomposable is a sufficient condition for rigidity in $\R^n$ when $n=2$, although it is not a necessary one, as this example shows.}}\label{fig spikes}
\end{figure}
and, in fact, it is not even sufficient for rigidity in $\R^n$ when $n\ge 3$;
  \begin{figure}
  \begin{center}
    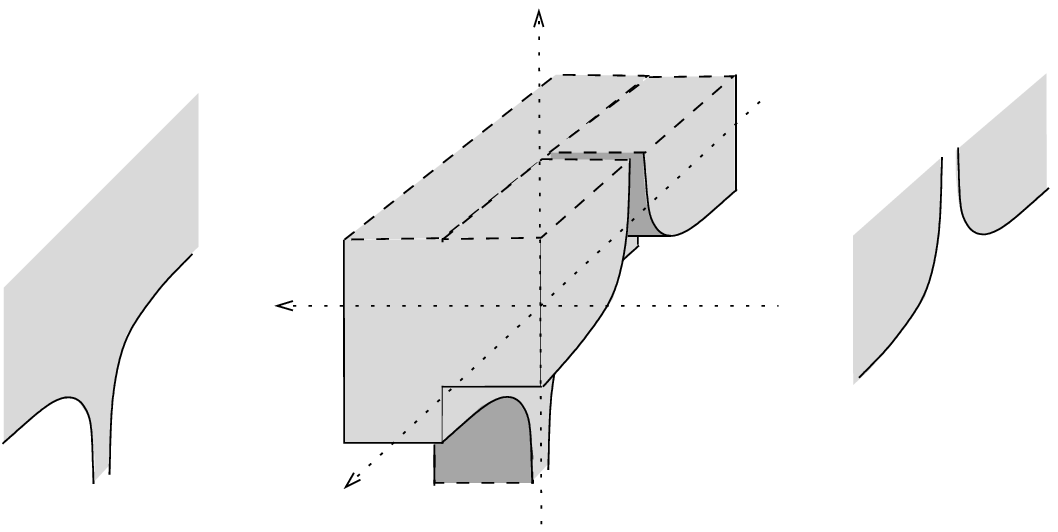
    \captionsetup{singlelinecheck=off}
    \caption[.]{{\small It may happen that both $F[v]$ and $\R^3\setminus F[v]$ are indecomposable, but rigidity fails. An example of this situation is obtained by setting
    \[
    F[v]=\Big\{x\in\R^3: 0<x_1<1\,,|x_2|<1\,,x_3>-\frac1{|x_2|}\Big\}\hspace{3cm}
    \]
    \[
    \hspace{1cm}\cup\Big\{x\in\R^3: -1<x_1<0\,,|x_2|<1\,,x_3>\frac1{|x_2|}\Big\}\,.  \hspace{3cm}
    \]
    Notice that the section $F[v]\cap\{x\in\R^3:x_1=t\}$ for $t\in(0,1)$ (depicted on the left) is an epigraph defined by two ``negative'' equilateral hyperbolas, while the section $F[v]\cap\{x\in\R^3:x_1=t\}$ for $t\in(-1,0)$ (depicted on the right) is an epigraph defined by two ``positive'' equilateral hyperbolas. Also, $\{x\in\R^2:-1<x_1<0\,,x_2=0\}\subset \{v^\wedge=0\}$ and $\{v^\vee=1\}=\{x\in\R^2:0<x_1<1\,,x_2=0\}$, so that $\{v^\wedge=0\}\cup\{v^\vee=1\}$ essentially disconnects $\{0<v<1\}=(-1,1)\times(-1,1)$, and by Theorem \ref{thm gauss} regularity fails. Indeed, the set $E$ defined by a vertical reflection of the part of $F[v]$ above $x_2>0$,
    \[
    E=\Big\{x\in F[v]:x_2<0\Big\}\cup\Big\{x\in\R^3:g(x)\in F[v]\,,x_2>0\Big\}\,,\hspace{2.8cm}
    \]
    is such that $\H^3(E\Delta F[v])>0$, $\H^3(E\Delta g(F[v]))>0$, and $P_\g(E)=P_\g(F[v])$. We also notice that condition \eqref{condition t} does not hold true in this example.}}\label{fig mistico}
  \end{center}
  \end{figure}
  see Figure \ref{fig mistico}. A sufficient condition for rigidity in $\R^n$, $n\ge 3$, is obtained by asking the existence of $\e>0$ such that
  \begin{equation}
    \label{condition t}
    \mbox{$F[v]\cap\Big(\{t<v<1-t\}\times\R\Big)$ is indecomposable for a.e. $t<\e$}\,;
  \end{equation}
  see Theorem \ref{thm pino}. However, not even this last condition is necessary for rigidity in $\R^n$: for an example in the planar case,
  \begin{figure}
    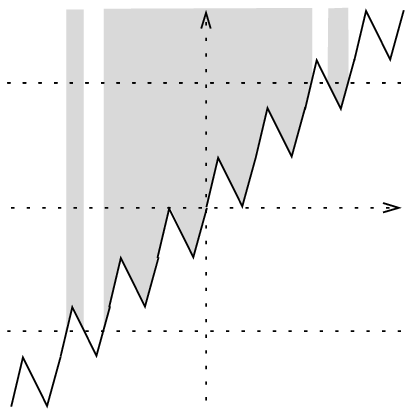\caption{{\small A planar epigraph such that rigidity holds true but condition \eqref{condition t} fails. The grey shaded area corresponds, for a generic $t\in(0,1)$, to the set $F[v]\cap(\{t<v<1-t\}\times\R)$, which turns out to be disconnected.}}\label{fig maria3}
  \end{figure}
  see Figure \ref{fig maria3}. In this case, \eqref{condition t} fails for every $t\in(0,1)$, but, of course, rigidity holds true. In conclusion, it seems not possible to achieve a characterization of rigidity in terms of indecomposability properties of $F[v]$ and related sets. At the same time, it is natural to guess that a characterization of rigidity in terms of essential connectedness should be expressed by the requirement that
  \[
  \mbox{$(\{v^\wedge=0\}\cup\{v^\vee=1\})\times\R$ does not essentially disconnect $F[v]$.}
  \]
  Although we shall not pursue this last direction here, in section \ref{section hey} we shall provide proofs of the above stated sufficient conditions for rigidity, see Theorem \ref{thm pino} and Theorem \ref{thm gino}.}
\end{remark}

\subsection{An outlook on Steiner's symmetrization inequality}\label{section confronto} With the aim to put the results and methods of this paper into the right perspective, we now present a quick overview on their applications to the study of rigidity of equality cases in Steiner's symmetrization inequality. Given a Lebesgue measurable function $v:\R^{n-1}\to[0,\infty]$ and a Lebesgue measurable set $E\subset\R^n$, at variance with the notation used in the rest of the paper, let us now say that $E$ is $v$-distributed if $\H^1(E_z)=v(z)$ for $\H^{n-1}$-a.e. $z\in\R^{n-1}$ (recall that $E_z$ denotes the vertical section of $E$, see \eqref{vertical section}), and let us set
\[
F[v]=\Big\{x\in\R^n:|\q x|<\frac{v(\p x)}2\Big\}\,,
\]
for the $v$-distributed set whose vertical sections are segments centered at height $x_n=0$. By definition, $F[v]$ is the {\it Steiner's symmetral} $E^s$ of $E$, and by Fubini's theorem, $\H^n(E)=\H^n(F[v])$. Moreover, $F[v]$ is of finite perimeter and volume if and only if $v\in BV(\R^{n-1})$ with $\H^{n-1}(\{v>0\})<\infty$. In this case, Steiner's inequality ensures that
\begin{equation}\label{steiner inequality}
P(E)\ge P(F[v])\,,
\end{equation}
whenever $E$ is a $v$-distributed set (with $P(E)=\H^{n-1}(\pae E)$). In analogy with the notation used in the Gaussian case, we set
\[
\M(v)=\Big\{E\subset\R^n:\mbox{$E$ $v$-distributed and $P(E)=P(F[v])$}\Big\}\,,
\]
so that rigidity of equality cases in \eqref{steiner inequality} amounts to say that $E\in\M(v)$ if and only if $E$ is $\H^n$-equivalent to $t\,e_n+F[v]$ for some $t\in\R$. Simple examples show that we cannot always expect rigidity of equality cases when $\pa^*F[v]$ has vertical parts, or when the length of the sections of $F[v]$ vanishes inside the projection of $F[v]$;
  \begin{figure}
  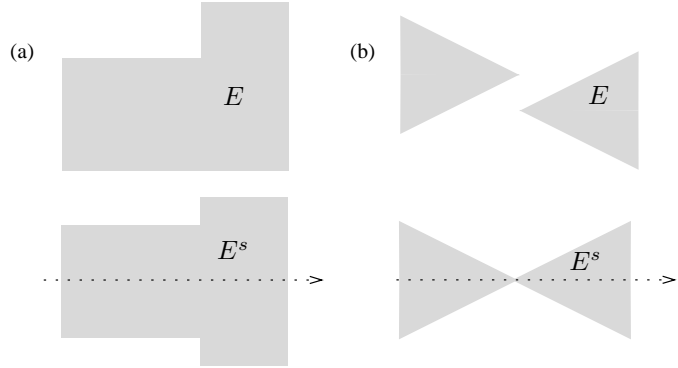\caption{{\small (a) In this case, $\pa^*E^s$ has vertical parts and rigidity fails; (b) In this case, $\pa^*E^s$ has no vertical parts, but the length of its sections vanishes inside its projection, and rigidity fails.}}\label{fig basic}
  \end{figure}
see Figure \ref{fig basic}. In the seminal paper \cite{ChlebikCianchiFuscoAnnals05}, Chleb{\'{\i}}k, Cianchi, and Fusco provide a sufficient condition for the rigidity of equality cases in Steiner's inequality that is inspired by the above considerations. Precisely, they consider the localization of \eqref{steiner inequality} above a Borel set $\Om\subset\R^{n-1}$,
\begin{equation}\label{steiner inequality localizzata}
P(E;\Om\times\R)\ge P(F[v];\Om\times\R)\,,
\end{equation}
and show that, if (a) $\Om$ is an open connected set, (b) $v\in W^{1,1}(\Om)$, and (c) $v^\wedge>0$ $\H^{n-2}$-a.e. on $\Om$, then $E$ is $\H^n$-equivalent to a vertical translation of $F[v]$ inside $\Om\times\R$ whenever $P(E;\Om\times\R)=P(F[v];\Om\times\R)$. Assumption (b) and (c) express the requirements that $\pa^*F[v]$ has no vertical parts above $\Om$ and that the sections of $F[v]$ do not vanish inside the projection of $F[v]$. Although these conditions look reasonable in light of the examples depicted in Figure \ref{fig basic}, it is not hard to construct examples of sets in $\R^3$ such that rigidity holds true but either condition (b) or (c) fail. Moreover, as our analysis of Ehrhard's inequality suggests, the use of topological connectedness in assumption (a) should be unnecessary. By exploiting the ideas introduced in this paper, one can obtain several rigidity results for Steiner's inequality and largely extend the scope of previous rigidity theory. For example, in strikingly analogy with Theorem \ref{thm gauss}, one can show that if $v\in BV(\R^{n-1};[0,\infty))$ with $\H^{n-1}(\{v>0\})<\infty$ and $D^sv\llcorner\{v^\wedge>0\}=0$ (where $D^sv$ denotes the singular part of the distributional derivative of $v$), then, equivalently,
\begin{enumerate}
  \item[(i)] if $E\in\M(v)$, then $E$ is $\H^n$-equivalent to $t\,e_n+F[v]$ for some $t\in\R$;
  \item[(ii)] the set $\{v^\wedge=0\}$ does not essentially disconnect $\{v>0\}$;
  \item[(iii)] $F[v]$ is indecomposable.
\end{enumerate}
(Implication $(ii)\Rightarrow(i)$ of this theorem, coupled with an approximation argument, leads to a proof of the Chleb{\'{\i}}k-Cianchi-Fusco theorem.) Moreover, we can obtain suitable characterizations of rigidity even in the case when $\pa^*F[v]$ contains more substantial vertical parts than those allowed by the assumption $D^sv\llcorner\{v^\wedge>0\}=0$. We refer interested readers to the forthcoming paper \cite{ccdpmSTEINER} for a detailed account on these results.

\medskip

\noindent {\bf Acknowledgement}\,: This work was carried out while FC, MC, and GDP were visiting the University of Texas at Austin. The work of FC was partially supported by the UT Austin-Portugal partnership through the FCT post-doctoral fellowship SFRH/BPD/51349/2011. The work of GDP was partially supported by ERC under FP7, Advanced Grant n. 246923. The work of FM was partially supported by ERC under FP7, Starting Grant n. 258685 and Advanced Grant n. 226234, by the Institute for Computational Engineering and Sciences and by the Mathematics Department of the University of Texas at Austin during the time he was visiting these institutions, and by NSF Grant DMS-1265910.

\section{Notions from Geometric Measure Theory}\label{section preliminaries} We gather here some tools from Geometric Measure Theory. The notions needed in this paper are treated in adequate generality in the monographs \cite{GMSbook1,AFP,maggiBOOK}.

\subsection{General notation in $\R^n$}\label{section general notation} We denote by $B(x,r)$ and $\ov{B}(x,r)$ the open and closed Euclidean balls of radius $r>0$ and center $x\in\R^n$. Given $x\in\R^n$ and $\nu\in S^{n-1}$ we denote by $H_{x,\nu}^+$ and $H_{x,\nu}^-$ the complementary half-spaces
\begin{eqnarray}\label{Hxnu+}
  H_{x,\nu}^+&=&\Big\{y\in\R^n:(y-x)\cdot\nu\ge 0\Big\}\,,
  \\\nonumber
  H_{x,\nu}^-&=&\Big\{y\in\R^n:(y-x)\cdot\nu\le 0\Big\}\,.
\end{eqnarray}
Finally, we decompose $\R^n$ as the product $\R^{n-1}\times\R$, and denote by $\p:\R^n\to\R^{n-1}$ and $\q:\R^n\to\R$ the corresponding horizontal and vertical projections, so that $x=(\p x,\q x)=(x',x_n)$ and $x'=(x_1,\dots,x_{n-1})$ for every $x\in\R^n$. We set
\begin{eqnarray*}
\C_{x,r}&=&\Big\{y\in\R^n:|\p x- \p y|<r\,,|\q x- \q y|<r\Big\}\,,
\\
\D_{z,r}&=&\Big\{w\in\R^{n-1}:|w-z|<r\Big\}\,,
\end{eqnarray*}
for the vertical cylinder of center $x\in\R^n$ and radius $r>0$, and for the $(n-1)$-dimensional ball in $\R^{n-1}$ of center $z\in\R^{n-1}$ and radius $r>0$, respectively. In this way, $\C_{x,r}=\D_{\p x,r}\times(\q x-r,\q x+r)$. We shall use the following two notions of convergence for Lebesgue measurable subsets of $\R^n$. Given Lebesgue measurable sets $\{E_h\}_{h\in\N}$ and $E$ in $\R^n$, we shall say that $E_h$ locally converge to $E$, and write
\[
E_h\toloc E\,,\qquad\mbox{as $h\to\infty$}\,,
\]
provided $\H^n((E_h\Delta E)\cap K)\to 0$ as $h\to\infty$ for every compact set $K\subset\R^n$; we say that $E_h$ converge to $E$ as $h\to\infty$, and write $E_h\to E$, provided $\H^n(E_h\Delta E)\to 0$ as $h\to\infty$.

\subsection{Density points} If $E$ is a Lebesgue measurable set in $\R^n$ and $x\in\R^n$, then we define the upper and lower $n$-dimensional densities of $E$ at $x$ as
\begin{eqnarray*}
  \theta^*(E,x)=\limsup_{r\to 0^+}\frac{\H^n(E\cap \ov{B}(x,r))}{\om_n\,r^n}\,,
  \qquad
  \theta_*(E,x)=\liminf_{r\to 0^+}\frac{\H^n(E\cap \ov{B}(x,r))}{\om_n\,r^n}\,,
\end{eqnarray*}
respectively. In this way we define two Borel functions on $\R^n$, that agree a.e. on $\R^n$. In particular, the $n$-dimensional density of $E$ at $x$
\[
\theta(E,x)=\lim_{r\to 0^+}\frac{\H^n(E\cap \ov{B}(x,r))}{\om_n\,r^n}=\lim_{r\to 0}\frac{\H^n(E\cap B(x,r))}{\om_n\,r^n}\,,
\]
is defined for a.e. $x\in\R^n$, and $\theta(E,\cdot)$ is a Borel function on $\R^n$ (up to extending it by a constant value on some $\H^n$-negligible set). Correspondingly, for $t\in[0,1]$, we set $E^{(t)}=\{x\in\R^n:\theta(E,x)=t\}$. By the Lebesgue differentiation theorem, $\{E^{(0)},E^{(1)}\}$ is a partition of $\R^n$ up to a $\H^n$-negligible set. It is useful to keep in mind that
\begin{eqnarray*}
  &&x\in E^{(1)}\qquad\mbox{if and only if}\qquad E_{x,r}\toloc\R^n\quad\mbox{as $r\to 0^+$}\,,
  \\
  &&x\in E^{(0)}\qquad\mbox{if and only if}\qquad E_{x,r}\toloc\emptyset\quad\mbox{as $r\to 0^+$}\,,
\end{eqnarray*}
where $E_{x,r}$ denotes the blow-up of $E$ at $x$ at scale $r$, defined as
\begin{eqnarray*}
  E_{x,r}=\frac{E-x}{r}=\Big\{\frac{y-x}r:y\in E\Big\}\,,\qquad x\in\R^n\,,r>0\,.
\end{eqnarray*}
The set $\pae E=\R^n\setminus(E^{(0)}\cup E^{(1)})$ is called the essential boundary of $E$. Thus, in general, we only have $\H^n(\pae E)=0$, and we do not know $\pae E$ to be ``$(n-1)$-dimensional''.

\subsection{Approximate limits}\label{section approximate limits} Strictly related to the notion of density is that of approximate upper and lower limits of a measurable function. We shall stick to Federer's convention \cite[2.9.12]{FedererBOOK} in place of the one usually adopted in the study of functions of bounded variation \cite[Section 3.6]{AFP} since we will mainly deal with functions of {\it generalized} bounded variation; see section \ref{section sofp}. Given a Lebesgue measurable function $f:\R^n\to\R\cup\{\pm\infty\}$ we define the (weak) approximate upper and lower limits of $f$ at $x\in\R^n$ as
  \begin{eqnarray*}
  f^\vee(x)&=&\inf\Big\{t\in\R:\theta(\{f>t\},x)=0\Big\}=\inf\Big\{t\in\R:\theta(\{f<t\},x)=1\Big\}\,,
  \\
  f^\wedge(x)&=&\sup\Big\{t\in\R:\theta(\{f<t\},x)=0\Big\}=\sup\Big\{t\in\R:\theta(\{f>t\},x)=1\Big\}\,.
  \end{eqnarray*}
 Note that $f^\vee$ and $f^\wedge$ are Borel functions with values on $\R\cup\{\pm\infty\}$ defined \textit{at every point $x$} of $\R^n$, and they do not depend on the representative chosen for the function $f$.
 The approximate jump of $f$ is the Borel function $[f]:\R^n\to[0,\infty]$ defined by
 \[
 [f](x)=f^\vee(x)-f^\wedge(x)\,,\qquad x\in\R^n\,.
 \]
 We easily deduce the the following properties, which hold true for every Lebesgue measurable $f:\R^n\to\R\cup\{\pm\infty\}$ and for every $t \in \mathbb{R}$:
\begin{align}
\{ |f|^{\vee}  < t \} &=\{ - t < f^{\wedge} \} \cap \{ f^{\vee} < t \}\,, \label{dens1}
\\
\{ f^{\vee} < t \} &\subset \{ f < t \}^{(1)}\subset\{f^\vee\le t\}\,, \label{dens2}
\\
\{ f^{\wedge} > t \} &\subset \{ f > t \}^{(1)}\subset\{f^\wedge \ge t\}\,. \label{dens3}
\end{align}
(Note that all the inclusions may be strict, that we also have $\{ f < t \}^{(1)}=\{ f^\vee < t \}^{(1)}$, and that all the other analogous relations hold true.) If $f$ is non negative and $E$ is Lebesgue measurable, then for every $x \in E^{(1)}$, we have
\begin{equation}\label{gino1}
( 1_{E}  f)^{\vee} (x) = f^{\vee} (x) \,,
\qquad ( 1_{E}  f)^{\wedge} (x) = f^{\wedge} (x)\,\,.
\end{equation}
Finally, we notice that if $I$ and $J$ are intervals in $\R\cup\{\pm\infty\}$, $\vphi:I\to J$ is continuous and decreasing, and $f$ takes values into $I$, then $v=\vphi\circ f$ is Lebesgue measurable on $\R^n$, with
\begin{equation}
  \label{continuous and decreasing}
  v^\wedge=\vphi(f^\vee)\,,\qquad v^\vee=\vphi(f^\wedge)\,.
\end{equation}
We now introduce the set of approximate discontinuity points $S_f$ of a Lebesgue measurable function $f:\R^n\to\R\cup\{\pm\infty\}$, which is defined as
\[
S_f=\Big\{x\in\R^n:f^\wedge(x)<f^\vee(x)\Big\}=\Big\{x\in\R^n:[f](x)>0\Big\}\,.
\]
We have the following general fact, that is usually stated in the finite-valued case only. For this reason we have included the short proof.

\begin{proposition}
  If $f:\R^n\to\R\cup\{\pm\infty\}$ is Lebesgue measurable, then $\{f^\wedge=f^\vee=f\}$ is $\H^n$-equivalent to $\R^n$. In particular, $f^\vee$ and $f^\wedge$ are representatives of $f$, and $\H^n(S_f)=0$.
\end{proposition}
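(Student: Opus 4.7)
The plan is to reduce the statement to the classical case of a bounded real-valued function via a monotone change of variable, and then invoke the Lebesgue differentiation theorem.

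First I would pick a continuous strictly decreasing bijection $\vphi:\R\cup\{\pm\infty\}\to[0,1]$, for instance $\vphi(t)=\tfrac12-\tfrac1\pi\arctan t$ with the conventions $\vphi(-\infty)=1$, $\vphi(+\infty)=0$, and set $v=\vphi\circ f:\R^n\to[0,1]$. Since $\vphi$ is a homeomorphism onto its image, $v$ is a bounded Lebesgue measurable function. The purpose of this composition is to bypass the complication that $f$ may attain the values $\pm\infty$: once we control $v$ pointwise, we can transfer information back to $f$ via $\vphi^{-1}$.

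Next I would apply the Lebesgue differentiation theorem to $v$: since $v\in L^\infty_{\loc}(\R^n)$, the set $L$ of Lebesgue points of $v$ is such that $\H^n(\R^n\setminus L)=0$. For $x\in L$ and any $s>v(x)$, the elementary estimate
\[
(s-v(x))\,\frac{\H^n(B(x,r)\cap\{v>s\})}{\om_n r^n}\;\le\;\frac1{\om_n r^n}\int_{B(x,r)}|v(y)-v(x)|\,dy
\]
forces $\theta(\{v>s\},x)=0$, hence $v^\vee(x)\le s$; letting $s\downarrow v(x)$ gives $v^\vee(x)\le v(x)$. Symmetrically, for $s<v(x)$ one gets $\theta(\{v<s\},x)=0$, so $v^\wedge(x)\ge v(x)$. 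Combined with the general inequality $v^\wedge\le v^\vee$, this yields $v^\wedge(x)=v^\vee(x)=v(x)$ for every $x\in L$.

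Finally, I would invoke identity \eqref{continuous and decreasing}: since $\vphi$ is continuous and decreasing and $f$ takes values in $\R\cup\{\pm\infty\}$, we have $v^\wedge=\vphi(f^\vee)$ and $v^\vee=\vphi(f^\wedge)$ \emph{at every point}. On $L$ this gives $\vphi(f^\vee(x))=\vphi(f(x))=\vphi(f^\wedge(x))$, and since $\vphi$ is injective on $\R\cup\{\pm\infty\}$ we conclude $f^\vee(x)=f^\wedge(x)=f(x)$ for every $x\in L$. Because $\H^n(\R^n\setminus L)=0$, the set $\{f^\wedge=f^\vee=f\}$ is $\H^n$-equivalent to $\R^n$; in particular $f^\vee$ and $f^\wedge$ are representatives of $f$ and $\H^n(S_f)=\H^n(\{f^\wedge<f^\vee\})=0$. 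The only delicate point is the handling of infinite values of $f$, which is precisely what the composition with a bounded homeomorphism is designed to absorb.
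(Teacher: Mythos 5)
Your proof is correct and follows essentially the same route as the paper's: compose $f$ with a continuous decreasing bijection of $\R\cup\{\pm\infty\}$ onto $[0,1]$, transfer the claim to the bounded real-valued function $v$, and use \eqref{continuous and decreasing} together with injectivity to pull the conclusion back to $f$. The paper uses $\Phi$ from \eqref{definition of Phi} rather than your arctan-based map (an immaterial choice), and where you run the Lebesgue differentiation argument explicitly, the paper simply cites \cite[Section 3.1.4, Proposition 3]{GMSbook1} for $\H^n(S_v)=0$. Your version is a bit more self-contained and also directly yields the full claim $v^\wedge=v^\vee=v$ $\H^n$-a.e.\ (not merely $v^\wedge=v^\vee$ a.e.), which is what the proposition actually asserts; the paper leaves that step implicit in the reference. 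One small point worth flagging: you invoke the ``general inequality'' $v^\wedge\le v^\vee$. This is indeed true in Federer's convention (if $v^\wedge(x)>v^\vee(x)$, then every $\tau$ strictly between them would have $\theta(\{v=\tau\},x)=1$, which cannot happen for more than one $\tau$), but since the paper uses the weak definitions \eqref{def fvee}--\eqref{def fwedge} rather than the BV-book ones, it is worth being aware that this is a fact to be checked rather than a definitional triviality.
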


\begin{proof}
  Let us consider the function $\Phi$ defined in \eqref{definition of Phi}. Since $\Phi:\R\cup\{\pm \infty\}\to[0,1]$ is continuous and decreasing, it turns out that $v=\Phi\circ f:\R^n\to[0,1]$ is Lebesgue measurable, with $v^\vee=\Phi\circ f^\wedge$ and $v^\wedge=\Phi\circ f^\vee$. Thus $S_v=S_f$, where, by \cite[Section 3.1.4, Proposition 3]{GMSbook1}, $\H^n(S_v)=0$.
\end{proof}

If $f:\R^n\to\R\cup\{\pm\infty\}$ and $A\subset\R^n$ Lebesgue measurable, then we say that $t\in\R\cup\{\pm\infty\}$ is the approximate limit of $f$ at $x$ with respect to $A$, and write $t=\aplim (f,A,x)$, if
\begin{eqnarray*}
  &&\theta\Big(\{|f-t|>\e\}\cap A;x\Big)=0\,,\qquad\forall\e>0\,,\hspace{0.3cm}\qquad (t\in\R)\,,
  \\
  &&\theta\Big(\{f<M\}\cap A;x\Big)=0\,,\qquad\hspace{0.6cm}\forall M>0\,,\qquad (t=+\infty)\,,
  \\
  &&\theta\Big(\{f>-M\}\cap A;x\Big)=0\,,\qquad\hspace{0.3cm}\forall M>0\,,\qquad (t=-\infty)\,.
\end{eqnarray*}
We say that $x\in S_f$ is a jump point of $f$ if there exists $\nu\in S^{n-1}$ such that
\[
f^\vee(x)=\aplim(f,H_{x,\nu}^+,x)\,,\qquad f^\wedge(x)=\aplim(f,H_{x,\nu}^-,x)\,.
\]
If this is the case we set $\nu=\nu_f(x)$, the approximate jump direction of $f$ at $x$. We denote by $J_f$ the set of approximate jump points of $f$, so that $J_f\subset S_f$; moreover, $\nu_f:J_f\to S^{n-1}$ is a Borel function. It will be particularly useful to keep in mind the following proposition.

\begin{proposition}\label{lemma nonciserve}
  We have that $x\in J_f$ if and only if for every $\e>0$ such that $f^\wedge(x)+\e<f^\vee(x)-\e$ we have
  \begin{eqnarray*}
    \{|f-f^\vee(x)|\le\e\}_{x,r}\toloc H_{0,\nu}^+\,,\qquad\{|f-f^\wedge(x)|\le\e\}_{x,r}\toloc H_{0,\nu}^-\,,\qquad\mbox{as $r\to 0^+$}\,.
  \end{eqnarray*}
  Similarly, $x\in J_f$ if and only if for every $\tau\in(f^\wedge(x),f^\vee(x))$ we have
  \begin{eqnarray}\label{jump point level sets}
    \{f>\tau\}_{x,r}\toloc H_{0,\nu}^+\,,\qquad\{f<\tau\}_{x,r}\toloc H_{0,\nu}^-\,,\qquad\mbox{as $r\to 0^+$}\,.
  \end{eqnarray}
\end{proposition}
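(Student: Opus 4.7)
The plan is to unfold the definition of $\aplim$ into density statements and then translate between density and local convergence of blow-ups. Recall that $A_{x,r}\toloc H_{0,\nu}^+$ is equivalent to $\theta(A\cap H_{x,\nu}^+;x)=1$ together with $\theta(A\cap H_{x,\nu}^-;x)=0$. Also, $x\in J_f$ asserts (in the finite case) the existence of $\nu\in S^{n-1}$ such that $\theta(\{|f-f^\vee(x)|>\e\}\cap H_{x,\nu}^+;x)=0$ and $\theta(\{|f-f^\wedge(x)|>\e\}\cap H_{x,\nu}^-;x)=0$ for every $\e>0$, with the obvious modifications if $f^\vee(x)=+\infty$ or $f^\wedge(x)=-\infty$, using the corresponding ``large $M$'' formulation of $\aplim$.

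For the first equivalence, set $A_\e^+=\{|f-f^\vee(x)|\le\e\}$ and $A_\e^-=\{|f-f^\wedge(x)|\le\e\}$, and fix $\e>0$ with $f^\wedge(x)+\e<f^\vee(x)-\e$. Assuming $x\in J_f$, the $\aplim$ condition on $H_{x,\nu}^+$ yields $\theta(A_\e^+\cap H_{x,\nu}^+;x)=1$; on the opposite half-space, the $\aplim$ condition for $f^\wedge$ gives $\theta(A_\e^-\cap H_{x,\nu}^-;x)=1$, and since the gap $f^\vee(x)-f^\wedge(x)>2\e$ forces $A_\e^+\cap A_\e^-=\emptyset$, we also obtain $\theta(A_\e^+\cap H_{x,\nu}^-;x)=0$. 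Hence $(A_\e^+)_{x,r}\toloc H_{0,\nu}^+$, and the symmetric argument gives $(A_\e^-)_{x,r}\toloc H_{0,\nu}^-$. Conversely, if both local convergences hold for every such $\e$, then the required density vanishings in the definition of $J_f$ are immediate.

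For the second equivalence, suppose $x\in J_f$ and $\tau\in(f^\wedge(x),f^\vee(x))$. Choose $\e>0$ small enough that $[f^\vee(x)-\e,f^\vee(x)+\e]\subset(\tau,+\infty)$ and $[f^\wedge(x)-\e,f^\wedge(x)+\e]\subset(-\infty,\tau)$ (switching to ``$f>M$'' resp.\ ``$f<-M$'' in the infinite cases). Then $A_\e^+\subset\{f>\tau\}$ and $A_\e^-\subset\{f<\tau\}$, and the first equivalence already established gives the needed densities of $\{f>\tau\}$ and $\{f<\tau\}$ on the two half-spaces, using also that by definition of $f^\vee$ and $f^\wedge$ we automatically have $\theta(\{f>f^\vee(x)+\e\};x)=0$ and $\theta(\{f<f^\wedge(x)-\e\};x)=0$ to control the ``excess'' in $H_{x,\nu}^+$ and $H_{x,\nu}^-$ respectively. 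Conversely, given \eqref{jump point level sets} for every $\tau$, fix $\e$ as above and take $\tau=f^\vee(x)-\e/2$ and $\sigma=f^\wedge(x)+\e/2$, both in $(f^\wedge(x),f^\vee(x))$: intersecting $\{f>\tau\}$ with $\{f<f^\vee(x)+\e\}$ on $H_{x,\nu}^+$ recovers density $1$ for $A_\e^+\cap H_{x,\nu}^+$ at $x$, and similarly on the other side, so by the first equivalence $x\in J_f$.

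The main obstacle will be the pedantic but unavoidable bookkeeping in the $\pm\infty$ cases, where $\aplim$ is phrased through a large threshold $M$ rather than a symmetric $\e$-neighborhood; the half-space density arguments go through verbatim by replacing $A_\e^+$ with $\{f>M\}$ when $f^\vee(x)=+\infty$ and $A_\e^-$ with $\{f<-M\}$ when $f^\wedge(x)=-\infty$, and by choosing the auxiliary $\tau$ values in the second equivalence to lie beyond any prescribed threshold.
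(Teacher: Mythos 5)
Your proposal follows the same underlying strategy as the paper's: translate $x\in J_f$ into vanishing of densities of the sets $\{|f-f^\vee(x)|>\e\}\cap H_{x,\nu}^+$ and $\{|f-f^\wedge(x)|>\e\}\cap H_{x,\nu}^-$, and then exploit the fact that $A_\e^+=\{|f-f^\vee(x)|\le\e\}$ and $A_\e^-=\{|f-f^\wedge(x)|\le\e\}$ are disjoint when $f^\wedge(x)+\e<f^\vee(x)-\e$, so that the density-full set on one side of the hyperplane squeezes out the other level set. The paper packages this slightly differently (it takes complements, intersects the two resulting full-density sets using $E^{(1)}\cap F^{(1)}=(E\cap F)^{(1)}$, and then splits the intersection across the two half-spaces), and it only spells out the ``only if'' half of the first equivalence, leaving the remaining three implications to the reader; you sketch all four, and your reduction of the second equivalence to the first by sandwiching $\{f>\tau\}\supset A_\e^+$ and $\{f<\tau\}\supset A_\e^-$ is a clean way to handle it.

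One correction: your stated characterization of $A_{x,r}\toloc H_{0,\nu}^+$ as ``$\theta(A\cap H_{x,\nu}^+;x)=1$ together with $\theta(A\cap H_{x,\nu}^-;x)=0$'' is off by a normalization. Since $H_{x,\nu}^+$ has density $1/2$ at $x$, the first condition should read $\theta(A\cap H_{x,\nu}^+;x)=1/2$, equivalently $\theta(H_{x,\nu}^+\setminus A;x)=0$; i.e.\ $A_{x,r}\toloc H_{0,\nu}^+$ iff $\theta(A\Delta H_{x,\nu}^+;x)=0$. This does not affect the logic of the argument (you never actually use the numerical value $1$, only that the density saturates the half-space), but the statement as written is incorrect. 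With that adjusted, the proof is sound, including the observation that the $\pm\infty$ cases are handled by replacing the $\e$-window with a one-sided threshold $M$.
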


\begin{proof} We prove the ``only if'' part of the first equivalence only, leaving the other implications to the reader. Let us set $t=f^\vee(x)$ and $s=f^\wedge(x)$. By assumption
  \begin{eqnarray*}
   \Big(\Big\{|f-t|>\e\Big\}\cap H_{x,\nu}^+\Big)_{x,r}\toloc\emptyset\,,
   \qquad
   \Big(\Big\{|f-s|>\e\Big\}\cap H_{x,\nu}^-\Big)_{x,r}\toloc\emptyset\,,
  \end{eqnarray*}
  as $r\to 0^+$. As a consequence, as $r\to 0^+$,
  \begin{eqnarray*}
   \Big(\Big\{|f-t|\le\e\Big\}\cup H_{x,\nu}^-\Big)_{x,r}\toloc\R^n\,,
   \qquad
   \Big(\Big\{|f-s|\le\e\Big\}\cup H_{x,\nu}^+\Big)_{x,r}\toloc\R^n\,.
  \end{eqnarray*}
  As $E^{(1)}\cap F^{(1)}=(E\cap F)^{(1)}$, we find
   \begin{eqnarray*}
   &&\Big(\Big(\Big\{|f-t|\le\e\Big\}\cup H_{x,\nu}^-\Big)
   \cap\Big(\Big\{|f-s|\le\e\Big\}\cup H_{x,\nu}^+\Big)\Big)_{x,r}\toloc\R^n\,,
   \\\
   \mbox{that is}\hspace{0.5cm}&&\Big(\Big\{|f-t|\le\e\Big\}\cap H_{x,\nu}^+\Big)_{x,r}
   \cup\Big(\Big\{|f-s|\le\e\Big\}\cap H_{x,\nu}^-\Big)_{x,r}\toloc\R^n\,,
  \end{eqnarray*}
  Since the two sets are disjoint, the first one contained in $H_{0,\nu}^+$, the second one in $H_{0,\nu}^-$, we complete the proof.
\end{proof}


\subsection{Rectifiable sets}\label{section preliminaries rect} Let $1\le k\le n$, $k\in\N$. A Borel set $M\subset\R^n$ is {\it countably $\H^k$-rectifiable} if there exist  Lipschitz functions $f_h:\R^k\to\R^n$ ($h\in\N$) such that
\begin{equation}
  \label{countably rectifiable}
  \H^k\bigg(M\setminus\bigcup_{h\in\N}f_h(\R^k)\bigg)=0\,.
\end{equation}
We further say that $M$ is {\it locally $\H^k$-rectifiable} if $\H^k(M\cap K)<\infty$ for every compact set $K\subset\R^n$, or, equivalently, if $\H^k\llcorner M$ is a Radon measure on $\R^n$. Hence, for a locally $\H^k$-rectifiable set $M$ in $\R^n$ the following definition is well-posed: we say that $M$ has a $k$-dimensional subspace $L$ of $\R^n$ as its approximate tangent plane at $x\in\R^n$, $L=T_xM$, if
\[
\lim_{r\to 0^+}\frac1{r^k}\int_{B(x,r)\cap M}\vphi\Big(\frac{y-x}r\Big)\,d\H^k(y)=\int_{L}\,\vphi\,d\H^k\,,\qquad\forall\vphi\in C^0_c(\R^n)\,.
\]
It turns out that $T_xM$ exists and is uniquely defined at $\H^k$-a.e. $x\in M$. Moreover, given two locally $\H^k$-rectifiable sets $M_1$ and $M_2$ in $\R^n$, it turns out that $T_xM_1=T_xM_2$ for $\H^k$-a.e. $x\in M_1\cap M_2$. Since $f(\R^k)$ is locally $\H^k$-rectifiable whenever $f:\R^k\to\R^n$ is a Lipschitz function, if $M$ is merely a countably $\H^k$-rectifiable set and $\{f_h\}_{h\in\N}$ is a sequence of Lipschitz functions  satisfying \eqref{countably rectifiable}, then we can find a partition modulo $\H^k$ of $M$ into Borel sets $\{M_h\}_{h\in\N}$ such that $T_xf(\R^k)$ exists for every $x\in M_h$: correspondingly, we set $T_xM=T_xf_h(\R^k)$ for $x\in M_h$. The definition is well-posed in the sense that the approximate tangent spaces defined by another family of Lipschitz functions $\{g_h\}_{h\in\N}$ satisfying \eqref{countably rectifiable} will just coincide at $\H^k$-a.e. $x\in M$ with the ones defined by $\{f_h\}_{h\in\N}$. In other words, $\{T_xM\}_{x\in M}$ is well-defined as an equivalence class modulo $\H^k$ of Borel functions from $M$ to the set of $k$-planes in $\R^n$.

Finally, we mention the following consequence of \cite[3.2.23]{FedererBOOK}: if $M$ is countably $\H^k$-rectifiable in $\R^n$, then $M\times\R^\ell$ is countably $\H^{k+\ell}$-rectifiable in $\R^{n+\ell}$, and
\begin{equation}
  \label{federer 3.2.23}
  (\H^k\llcorner M)\times\H^{\ell}=\H^{k+\ell}\llcorner \Big(M\times\R^\ell\Big)\,.
\end{equation}

\subsection{Functions of bounded variation and sets of finite perimeter}\label{section sofp} Given an open set $\Om\subset\R^n$ and $f\in L^1(\Om)$, we say that $f$ has bounded variation in $\Om$, $f\in BV(\Om)$, if the total variation of $f$ in $\Om$, defined as
\[
|Df|(\Om)=\sup\Big\{\int_\Om\,f(x)\,\Div\,T(x)\,dx:T\in C^1_c(\Om;\R^n)\,,|T|\le 1\Big\}\,,
\]
is finite. We say that $f\in BV_{loc}(\Om)$ if $f:\Om\to\R$ is Lebesgue measurable, and, for every open set $\Om'\cc\Om$, we have $f\in BV(\Om')$. If $f\in BV_{loc}(\R^n)$ then the distributional derivative $Df$ of $f$ is an $\R^n$-valued Radon measure. The Radon--Nykodim decomposition of $Df$ with respect to $\H^n$ is denoted by $Df=D^af+D^sf$, where $D^sf$ and $\H^n$ are mutually singular, and where $D^af\ll\H^n$.
Moreover, $S_f$ is countably $\H^{n-1}$-rectifiable, with $\H^{n-1}(S_f\setminus J_f)=0$, $[f]\in L^1_{loc}(\H^{n-1}\llcorner J_f)$, and the $\R^n$-valued Radon measure $D^jf$, defined as $D^jf=[f]\,\nu_f\,d\H^{n-1}\llcorner J_f$, is called the jump part of $Df$. Since $D^af$ and $D^jf$ are mutually singular, by setting $D^cf=D^sf-D^jf$ we come to the canonical decomposition of $Df$ into the sum $D^af+D^jf+D^cf$, where $D^cf$ is called the Cantorian part of $Df$. It turns out that $|D^cf|(M)=0$ whenever $M$ is $\s$-finite with respect to $\H^{n-1}$.

A Lebesgue measurable set $E\subset\R^n$ is said of locally finite perimeter in $\R^n$ if $1_E\in BV_{loc}(\R^{n})$. In this case, we call $\mu_E=-D1_E$ the Gauss--Green measure of $E$, so that
\[
\int_E\nabla\vphi(x)\,dx=\int_{\R^n}\vphi(x)\,d\mu_E(x)\,,\qquad\forall \vphi\in C^1_c(\R^n)\,.
\]
The reduced boundary of $E$ is the set $\pa^*E$ of those $x\in\R^n$ such that
\[
\nu_E(x)=\lim_{r\to 0^+}\,\frac{\mu_E(B(x,r))}{|\mu_E|(B(x,r))}\qquad\mbox{exists and belongs to $S^{n-1}$}\,.
\]
The Borel function $\nu_E:\pa^*E\to S^{n-1}$ is called the measure-theoretic outer unit normal to $E$. It turns out that $\pa^*E$ is a locally $\H^{n-1}$-rectifiable set in $\R^n$ \cite[Corollary 16.1]{maggiBOOK}, that $\mu_E=\nu_E\,\H^{n-1}\llcorner\pa^*E$, so that
\[
\int_E\nabla\vphi(x)\,dx=\int_{\pa^*E}\vphi(x)\,\nu_E(x)\,d\H^{n-1}(x)\,,\qquad\forall \vphi\in C^1_c(\R^n)\,.
\]
We say that $x\in\R^n$ is a jump point of $E$, if and only if there exists $\nu\in S^{n-1}$ such that
\begin{equation}
  \label{jump point of E}
  E_{x,r}\toloc H_{0,\nu}^+\,,\qquad\mbox{as $r\to 0^+$}\,,
\end{equation}
and we denote by $\pa^JE$ the set of jump points of $E$. Notice that we always have $\pa^JE\subset E^{(1/2)}\subset\pae E$. In fact, if $E$ is a set of locally finite perimeter and $x\in\pa^*E$, then \eqref{jump point of E} holds true with $\nu=-\nu_E(x)$, so that $\pa^*E\subset\pa^JE$. Summarizing, if $E$ is a set of locally finite perimeter, we have
\begin{equation}
  \label{inclusioni frontiere}
  \pa^*E\subset\pa^J E\subset E^{1/2}\subset \pae E\,,
\end{equation}
and, moreover, by {\it Federer's theorem} \cite[Theorem 3.61]{AFP}, \cite[Theorem 16.2]{maggiBOOK},
\[
\H^{n-1}(\pae E\setminus\pa^*E)=0\,,
\]
so that $\pae E$ is locally $\H^{n-1}$-rectifiable in $\R^n$. We shall also need the following criterion for finite perimeter, known as {\it Federer's criterion} \cite[4.5.11]{FedererBOOK} (see also \cite[Theorem 1, section 5.11]{EvansGariepyBOOK}): if $E$ is a Lebesgue measurable set in $\R^n$ such that
\[
\H^{n-1}(K\cap\pae E)<\infty\,,\qquad\mbox{for every compact set $K\subset\R^n$}\,,
\]
then $E$ is a set of locally finite perimeter. (Notice that Federer's criterion is actually more general than this.) We conclude this preliminary section by the following remark, which shows the equivalence for a set of locally finite perimeter between being indecomposable and being essentially connected (see section \ref{section connectedness} for the terminology).

\begin{remark}\label{remark indecomposable 2}{\rm
  If $E$ is an indecomposable set in $\R^n$, then, whenever $\{F,G\}$ is a non-trivial partition of $E$ {\it by Lebesgue measurable sets}, we have
  \begin{equation}
    \label{ambrosio}
      \H^{n-1}\Big(E^{(1)}\cap\pae F\cap\pae G\Big)>0\,.
  \end{equation}
  Indeed, in the case that $\{F,G\}$ is further assumed to be a partition by sets of locally finite perimeter, then, by definition of indecomposability, there exists $r_0$ such that $P(E;B_r)<P(F;B_r)+P(G;B_r)$ for every $r>r_0$. Thus, by Federer's theorem,
  \begin{eqnarray}\nonumber
  \H^{n-1}(B_r\cap\pae E)&<&\H^{n-1}(B_r\cap\pae F)+\H^{n-1}(B_r\cap\pae G)
  \\\label{burger}
  &=&\H^{n-1}(B_r\cap\pae F\cap\pae E)+\H^{n-1}(B_r\cap\pae G\cap\pae E)\hspace{0.3cm}
  \\\nonumber
  &&+\H^{n-1}(B_r\cap\pae F\cap E^{(1)})+\H^{n-1}(B_r\cap\pae G\cap E^{(1)})
  \end{eqnarray}
  where we have used the fact that, since $F\subset E$, then $\pae F=(\pae F\cap \pae E)\cup(\pae F\cap E^{(1)})$ (a similar remark is applied to $G$ too). Since $(\pae F\Delta\pae G)\cap(E^{(1)}\cup E^{(0)})=\emptyset$ and $\pa^JF\cap\pa^JG\subset E^{(1)}$, by Federer's theorem we find that
  $\pae F\Delta\pae G$ is $\H^{n-1}$-equivalent to $\pae E$. Hence, \eqref{burger} is equivalent to $0<2\,\H^{n-1}(\pae F\cap\pae G\cap E^{(1)}\cap B_r)$ for every $r>r_0$, that is, \eqref{ambrosio}. To settle the general case, let us assume, arguing by contradiction, the existence of a non-trivial Lebesgue measurable partition $\{F,G\}$ of $E$ such that
  \begin{equation}
    \label{no ambrosio}
   0=\H^{n-1}\Big(E^{(1)}\cap\pae F\cap\pae G\Big)=\H^{n-1}\Big((\pae F\cap\pae G)\setminus\pae E\Big)\,.
  \end{equation}
  We are now going to show that, in this case, $F$ and $G$ are necessarily sets of locally finite perimeter, thus contradicting the fact that $E$ is  indecomposable. Indeed, since $F\subset E$, we have $E^{(0)}\subset F^{(0)}$, and thus $\pae F\cap E^{(0)}=E^{(0)}\setminus(F^{(0)}\cup F^{(1)})=\emptyset$; thus
  \begin{equation}
    \label{no ambrosio2}
  \pae F\subset\pae E\cup (\pae F\cap E^{(1)})\,.
  \end{equation}
  At the same time, since $\pae  F\cap E^{(1)}\subset \pae F\cap\pae G$, we find
  \[
  \pae  F\cap E^{(1)}\subset \Big(\pae F\cap\pae G\Big)\setminus\pae E\,.
  \]
  Therefore, by \eqref{no ambrosio} and \eqref{no ambrosio2}, for every compact set $K\subset\R^n$, and since $E$ is of locally finite perimeter,
  $\H^{n-1}(K\cap\pae F)\le  \H^{n-1}(K\cap\pae E)<\infty$. By Federer's criterion, $F$ is a set of locally finite perimeter, and so is $G=E\setminus F$. We can thus repeat our initial argument to prove that $\H^{n-1}(E^{(1)}\cap\pae F\cap\pae G)>0$ and obtain a contradiction.}
\end{remark}

\section{Rigidity of equality cases in Ehrhard inequality}\label{section gauss} This section contains the proofs of Theorem \ref{thm gauss} and Theorem \ref{thm gauss R2}. In section \ref{subsection epigraphs} we collect the basic results concerning epigraphs of locally finite perimeter. In section \ref{section gauss ii implica i} we show the implication $(ii)\Rightarrow(i)$ in Theorem \ref{thm gauss}, while in section \ref{section gauss i implica ii} we prove the implication $(i)\Rightarrow(ii)$. In section \ref{section gauss conclusion}, we finally prove Theorem \ref{thm gauss R2}.

\subsection{Epigraphs of locally finite perimeter and the space $GBV_*$}\label{subsection epigraphs} Let us set
\[
\S_f=\{x\in\R^n:\q x>f(\p x)\}\,.
\]
for the epigraph of $f:\R^{n-1}\to\R\cup\{\pm\infty\}$. In this section we analyze the situation when $f$ defines an epigraph of locally finite perimeter. To this end, it is convenient to introduce the functions $\tau_M:\R\to\R$ ($M>0$) defined as
\[
\tau_M(s)=\max\Big\{-M,\min\Big\{M,s\Big\}\Big\}\,,\qquad s\in\R\cup\{\pm\infty\}\,,
\]
and set the following definition: a Lebesgue measurable function $f:\R^{n-1}\to\R\cup\{\pm\infty\}$ is a function of generalized bounded variation with values in extended real numbers, $f\in GBV_*(\R^{n-1})$, if $\tau_M(f)\in BV_{loc}(\R^{n-1})$ for every $M>0$, or, equivalently, if $\psi(f)\in BV_{loc}(\R^{n-1})$ for every $\psi\in C^1(\R)$ with $\psi'\in C^0_c(\R)$. (Note that the composition makes sense since, for example, there will be positive constants $c$ and $t_0$ such that $\psi(t)=c$ for every $t>t_0$: correspondingly, we shall set $\psi(f)=c$ on $\{f=\infty\}$, and argue similarly on the set $\{f=-\infty\}$.) If we start from Lebesgue measurable functions $f:\R^{n-1}\to\R$, we shall set $GBV(\R^{n-1})$ for the corresponding space. The space $GBV_*(\R^{n-1})$ plays a particularly important role in our analysis because of the following proposition.

\begin{proposition}\label{prop f GBV}
  If $f:\R^{n-1}\to\R\cup\{\pm\infty\}$ is Lebesgue measurable, then $f\in GBV_*(\R^{n-1})$ if and only if $\S_f$ is of locally finite perimeter in $\R^n$; moreover, in this case, for a.e. $t\in\R$, we have that $\{f<t\}$ is a set of locally finite perimeter in $\R^{n-1}$.
\end{proposition}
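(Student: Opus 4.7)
The strategy is to reduce the statement to the classical equivalence, for bounded measurable functions, between being of locally bounded variation and having an epigraph of locally finite perimeter. The bridge between the extended real valued setting and the bounded one is the following geometric identity: for every $M>0$,
\begin{equation}\label{slab identity plan}
\S_f\cap\bigl(\R^{n-1}\times(-M,M)\bigr)=\S_{\tau_M(f)}\cap\bigl(\R^{n-1}\times(-M,M)\bigr),
\end{equation}
which I would verify by a direct case analysis in $x'$ according to whether $f(x')\ge M$, $f(x')\le -M$, or $|f(x')|<M$ (the values $\pm\infty$ being absorbed in the first two cases). In all cases the two sections over $x'$ agree inside the slab.

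The second ingredient is the classical lemma, which I would quote rather than reprove: a bounded Lebesgue measurable function $g:\R^{n-1}\to\R$ lies in $BV_{\loc}(\R^{n-1})$ if and only if its epigraph $\S_g$ has locally finite perimeter in $\R^n$. This is standard and proved via approximation by smooth graphs, the area formula and lower semicontinuity of perimeter, see e.g.\ \cite{maggiBOOK}.

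Given these two ingredients, the first equivalence splits naturally. For the implication $f\in GBV_*\Rightarrow \S_f$ of locally finite perimeter, fix $M>0$. By definition $\tau_M(f)\in BV_{\loc}(\R^{n-1})$, so the classical lemma yields that $\S_{\tau_M(f)}$ is of locally finite perimeter in $\R^n$, and \eqref{slab identity plan} transfers this to $\S_f$ inside $\R^{n-1}\times(-M,M)$. Since $M$ is arbitrary, $\S_f$ is of locally finite perimeter in $\R^n$. Conversely, assume $\S_f$ is of locally finite perimeter and fix $M>0$. Since $\tau_M(f)\in[-M,M]$, the epigraph $\S_{\tau_M(f)}$ coincides with $\R^{n-1}\times(M,\infty)$ above $\{x_n=M\}$ and is empty below $\{x_n=-M\}$; combining this description with \eqref{slab identity plan} shows that $\S_{\tau_M(f)}$ is of locally finite perimeter in $\R^n$. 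The classical lemma then gives $\tau_M(f)\in BV_{\loc}(\R^{n-1})$ for every $M$, hence $f\in GBV_*(\R^{n-1})$.

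For the last assertion, I would invoke horizontal slicing. Since $1_{\S_f}\in BV_{\loc}(\R^n)$, the standard slicing theorem for $BV_{\loc}$ functions in the variable $x_n$ gives that for a.e.\ $t\in\R$ the horizontal slice $\{x'\in\R^{n-1}:(x',t)\in\S_f\}$ is a set of locally finite perimeter in $\R^{n-1}$. But by definition of $\S_f$ this slice coincides with $\{f<t\}$, which concludes the proof. The only technical point is the classical lemma on bounded $BV$ functions; everything else is a routine case analysis and an appeal to slicing.
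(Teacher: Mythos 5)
Your proof is correct, but it takes a genuinely different route from the paper's. The paper argues directly: for ``$\S_f$ of locally finite perimeter $\Rightarrow f\in GBV_*$'' it integrates $1_{\S_f}$ against product test functions $\psi'(\q x)\vphi(\p x)$ and applies Fubini; for the converse it first derives the ``moreover'' clause via the coarea formula applied to $\tau_M\circ f\in BV_{\loc}$ and then bounds the vertical and horizontal derivatives of $1_{\S_f}$ separately. You instead reduce to the classical bounded-function epigraph lemma by truncation and the slab identity $\S_f\cap(\R^{n-1}\times(-M,M))=\S_{\tau_M(f)}\cap(\R^{n-1}\times(-M,M))$, which neatly isolates the only novelty (the values $\pm\infty$). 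Two spots would need tightening to make this airtight. First, to pass from the slab identity plus the above/below description of $\S_{\tau_M(f)}$ to ``$\S_{\tau_M(f)}$ has locally finite perimeter'' you should add a line: for instance, note $\S_{\tau_M(f)}=_{\H^n}\bigl(\S_f\cap(\R^{n-1}\times(-M,M))\bigr)\cup\bigl(\R^{n-1}\times(M,\infty)\bigr)$ is a Boolean combination of sets of locally finite perimeter, or invoke Federer's criterion since $\pae\S_{\tau_M(f)}$ is contained in $\pae\S_f$ together with the two hyperplanes $\{x_n=\pm M\}$. Second, what you call ``the standard slicing theorem in the variable $x_n$'' is really the restriction theorem for $BV$ on horizontal hyperplanes (the standard one-dimensional slicing theorem fixes $x'$ and slices in $t$, not the reverse); that restriction theorem is true but less commonly quoted, and it is simpler, once $f\in GBV_*$ is known, to apply the coarea formula to $\tau_M\circ f\in BV_{\loc}(\R^{n-1})$ and use $\{\tau_M\circ f<t\}=\{f<t\}$ for $|t|<M$, which is exactly the paper's route. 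Finally, be aware that the authors give a self-contained proof partly because the test-function estimate from their Step one is reused verbatim in the proof of Theorem \ref{thm gauss}, (ii)$\Rightarrow$(i); your reduction proves the Proposition but would not furnish that intermediate estimate for downstream use.
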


\begin{remark}
  {\rm If $\Om\subset\R^{n-1}$ is an open set and $f\in L^1(\Om)$, it is well-known that $f\in BV(\Om)$ if and only if $\S_f$ is of finite perimeter in $\Om\times\R$; see, e.g. \cite[Section 4.1.5]{GMSbook1}. This result, because of the artificial structures assumed in it (open set and summable function) will not suffice for our purposes. Moreover, it seems that the infinite-valued case is not covered by the literature. Therefore, we shall provide a proof of Proposition \ref{prop f GBV}. Similar remarks apply to Proposition \ref{proposition fine bv} and Lemma \ref{lemma fico} below. We also notice that we shall need to refer to these proofs in some crucial steps of the proof of Theorem \ref{thm gauss}.}
\end{remark}

\begin{remark}\label{remark f in GBV}
  {\rm Note that if $v\in BV_{loc}(\R^{n-1};[0,1])$, then $f=\Psi\circ v\in GBV_*(\R^{n-1})$, where $\Psi$ is defined as in \eqref{definition of Phi}. Indeed, if we pick any $\psi\in C^1(\R)$ with $\psi'\in C^0_c(\R)$, then $\psi\circ\Psi$ is real-valued on $[0,1]$, with $\psi\circ\Psi\in C^1([0,1])$ and $(\psi\circ\Psi)'\in C^0_c((0,1))$. Therefore, $\psi\circ f=(\psi\circ\Psi)\circ v\in BV_{loc}(\R^{n-1})$ by the $C^1$ chain rule theorem on $BV$.}
\end{remark}

\begin{proof}[Proof of Proposition \ref{prop f GBV}]
  {\it Step one}\,: We show that if $\S_f$ is of locally finite perimeter then $f\in GBV_*(\R^{n-1})$. Let $\psi\in C^1(\R)$ with $\psi'\in C^0_c(\R)$, so that $\psi\circ f$ is defined on $\R^{n-1}$ with $\psi\circ f\in L^\infty(\R^{n-1})\subset L^1_{loc}(\R^{n-1})$. If $\psi\in C^2(\R)$, then  $\psi'(\q x)\vphi(\p x)\in C^1_c(\R^n)$ for every $\vphi\in C^1_c(\R^{n-1})$, and thus, setting $\nabla'=(\pa_1,\dots,\pa_{n-1})$,
  \begin{eqnarray*}
    \Big|\int_{\S_f}\nabla'(\psi'(\q x)\vphi(\p x))\,dx\bigg|&=&\Big|\int_{\pa^*{\S_f}}\psi'(\q x)\,\vphi(\p x)\,\p\nu_{\S_f}(x)\,d\H^{n-1}(x)\Big|
    \\
    &\le&\Lip(\psi)\,\sup|\vphi|\, P({\S_f};\spt\vphi\times\spt\psi')\,.
  \end{eqnarray*}
  At the same time, by Fubini's theorem
  \[
  \int_{\S_f}\nabla'(\psi'(\q x)\vphi(\p x))\,dx=\int_{\R^{n-1}}\nabla'\vphi(z)\,dz\int_{f(z)}^\infty\,\psi'(t)\,dt
  = - \int_{\R^{n-1}}\psi(f(z))\,\nabla'\vphi(z)\,dz\,.
  \]
  Hence, for every $R>0$,
  \begin{eqnarray*}
  \sup\Big\{\Big|\int_{\R^{n-1}}(\psi\circ f)\,\nabla'\vphi \Big|:\vphi\in C^1_c(\D_R)\,,|\vphi|\le1\Big\}
  \le\Lip(\psi)\, P({\S_f};\D_R\times\spt\psi')<\infty\,,
  \end{eqnarray*}
  that is, $\psi(f)\in BV_{loc}(\R^{n-1})$ if $\psi\in C^2(\R)$. By approximation, the same holds if we only have $\psi\in C^1(\R)$, and thus, $f\in GBV_*(\R^{n-1})$.

  \medskip

  \noindent {\it Step two}\,: If $f\in GBV_*(\R^{n-1})$, then $\tau_M\circ f\in BV_{loc}(\R^{n-1})$, $\{\tau_M\circ f<t\}=\{f<t\}$ for every $|t|<M$, and $\{\tau_M\circ f<t\}$ is of locally finite perimeter for a.e. $t\in\R$. Hence, $\{f<t\}$ is of locally finite perimeter for a.e. $t\in\R$. Let now $\vphi\in C^1_c(\R^n)$, with $\spt\,\vphi\cc \D_R\times(-R,R)$ for some $R>0$. On the one hand, we have
  \begin{eqnarray}\label{pino1}
    \Big|\int_{\S_f}\pa_n\vphi\Big|=\Big|\int_{\R^{n-1}}\,dz\int_{f(z)}^\infty\pa_n\vphi\Big|\le \sup_{\R^n}|\vphi|\,\H^{n-1}(\D_R)\,;
  \end{eqnarray}
  on the other hand, since $\{f<t\}$ is of locally finite perimeter for a.e. $t\in\R$, we find
  \begin{eqnarray}\nonumber
    \Big|\int_{\S_f}\nabla'\vphi\Big|&=& \Big|\int_\R dt\int_{\{f<t\}}\nabla'\vphi(z,t)\,dz\Big|
    = \Big|\int_\R dt\int_{\pa^*\{f<t\}}\vphi(z,t)\,\nu_{\{f<t\}}(z)\,d\H^{n-2}(z)\Big|
    \\\label{pino2}
    &\le& \sup_{\R^{n}}|\vphi|\,\int_{-R}^{R} P(\{f<t\};\D_R)\,dt=\sup_{\R^{n}}|\vphi|\,|D(\tau_R\circ f)|(\D_R)\,,
  \end{eqnarray}
  by coarea formula. By \eqref{pino1} and \eqref{pino2}, $\S_f$ is a set of locally finite perimeter.
\end{proof}

Given a Lebesgue measurable function $f:\R^{n-1}\to\R\cup\{\pm\infty\}$, we set
\begin{eqnarray*}
\G_f&=&\Big\{x\in\R^n :f^\wedge(\p x)\le \q x\le f^\vee(\p x)\Big\}\,,
\\
\Gv_f&=&\Big\{x\in\R^n :f^\wedge(\p x)< \q x< f^\vee(\p x)\Big\}\,.
\end{eqnarray*}
We call $\G_f$ the complete graph of $f$, and $\Gv_f$ the vertical graph of $f$. Note that these objects are invariant in the $\H^{n-1}$-equivalence class of $f$.

\begin{proposition}\label{proposition fine bv}
  If $f\in GBV_*(\R^{n-1})$, then
  \begin{eqnarray}\label{bonn1}
  &&\pa^*\S_f\cap(S_f^c\times\R)=_{\H^{n-1}}\,\Big\{x\in\R^n:\q x=f^\wedge(\p x)=f^\vee(\p x)\Big\}\,,
  \\\label{bonn2}
  &&\pa^*\S_f\cap(S_f\times\R)=_{\H^{n-1}}\,\Gv_f\,,
  \\
  \label{Sf1}
  &&\hspace{2.1cm}\S_f^{(1)}=_{\H^{n-1}}\Big\{x\in\R^n:\q x>f^\vee(\p x)\Big\}\,,
  \\\label{Sf0}
  &&\hspace{2.1cm}\S_f^{(0)}=_{\H^{n-1}}\Big\{x\in\R^n:\q x<f^\wedge(\p x)\Big\}\,.
  \end{eqnarray}
  Moreover, $S_f$ is countably $\H^{n-2}$-rectifiable with $\H^{n-2}(S_f\setminus J_f)=0$. Finally, for $\H^{n-1}$-a.e. $x\in\Gv_f$, the outer unit normal $\nu_{\S_f}(x)$ exists, $S_f$ has an approximate tangent plane at $\p x$, and
  $\nu_{\S_f}(x)=(\nu_{S_f}(\p x),0)$, where $\nu_{S_f}(\p x)$ is a unit normal direction to $T_{\p x}S_f$ in $\R^{n-1}$.
\end{proposition}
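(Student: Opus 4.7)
The plan has three main steps: first, establish the density inclusions \eqref{Sf1}--\eqref{Sf0} by direct estimates, which will force $\pae\S_f\subset\G_f$; second, reduce to the bounded $BV$ case via the truncation $f_M:=\tau_M\circ f\in BV_{loc}(\R^{n-1})$, exploiting that $\S_f$ and $\S_{f_M}$ agree on $\R^{n-1}\times(-R,R)$ for $R<M$ (as essentially observed in the proof of Proposition \ref{prop f GBV}); third, identify $\pa^*\S_f$ inside $\G_f$ and extract the normal by a blow-up at jump points, using Proposition \ref{lemma nonciserve}.

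For the first step, fix $x_0=(z_0,t_0)$ with $t_0>f^\vee(z_0)$ and pick $\e>0$ with $t_0-\e>f^\vee(z_0)$, so $\theta(\{f\le t_0-\e\},z_0)=1$. For $r<\e$ and any $z\in\D_{z_0,r}$ with $f(z)\le t_0-\e$, the whole vertical fiber $\{z\}\times(t_0-r,t_0+r)$ lies in $\S_f$; hence
\[
\H^n(\S_f\cap \C_{x_0,r})\ge 2r\,\H^{n-1}\bigl(\{f\le t_0-\e\}\cap \D_{z_0,r}\bigr),
\]
and dividing by $\H^n(\C_{x_0,r})$ and letting $r\to 0^+$ gives $x_0\in\S_f^{(1)}$. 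The symmetric computation yields \eqref{Sf0}, and the partition $\R^n=\S_f^{(0)}\cup\S_f^{(1)}\cup\pae\S_f$ forces $\pae\S_f\subset\G_f$. For the rectifiability assertion, since $\tau_M$ is continuous and monotone, $(f_M)^\vee=\tau_M(f^\vee)$ and $(f_M)^\wedge=\tau_M(f^\wedge)$, so $S_{f_M}=S_f\cap\{f^\vee>-M\}\cap\{f^\wedge<M\}$ and $S_f=\bigcup_M S_{f_M}$ (values $f^\vee(z)=\pm\infty$ force $f^\wedge(z)=\pm\infty$ and hence $z\notin S_f$). Each $f_M\in BV_{loc}$, so the classical $BV$ jump-structure theorem gives $S_{f_M}$ countably $\H^{n-2}$-rectifiable with $\H^{n-2}(S_{f_M}\setminus J_{f_M})=0$; since $\tau_M$ is the identity on $\{|f^\vee|,|f^\wedge|<M\}$, the jump directions of $f_M$ and $f$ agree there. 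A countable union then delivers the $\H^{n-2}$-rectifiability of $S_f$ and $\H^{n-2}(S_f\setminus J_f)=0$.

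For the final identification, I invoke the classical structure theorem for subgraphs of bounded $BV$ functions (see e.g.\ \cite[Section 4.1.5]{GMSbook1}) applied to $f_M$: modulo $\H^{n-1}$, $\pa^*\S_{f_M}$ is the union of the precise-representative graph over $S_{f_M}^c$ and the vertical graph $\Gv_{f_M}$. The coincidence of $\S_f$ and $\S_{f_M}$ on the slab $\R^{n-1}\times(-R,R)$ for $R<M$, combined with $M\to\infty$, then yields \eqref{bonn1} and \eqref{bonn2}. The ``extra'' pieces $\{(z,f^\vee(z)):z\in S_f\}$ and $\{(z,f^\wedge(z)):z\in S_f\}$ of $\G_f$ have $\H^{n-1}$-measure zero: on each Lipschitz piece $f_k(\R^{n-2})$ of $S_f$, they are the Lipschitz image under $(a,t)\mapsto(f_k(a),t)$ of the graphs of Borel real-valued functions in $\R^{n-2}\times\R=\R^{n-1}$, which have $\mathcal{L}^{n-1}=\H^{n-1}$ measure zero by Fubini. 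Finally, at $\H^{n-1}$-a.e.\ $x\in\Gv_f$ (the exceptional set being contained in $(S_f\setminus J_f)\times\R$, which is $\H^{n-1}$-null by \eqref{federer 3.2.23}), Proposition \ref{lemma nonciserve} applied with $\tau=\q x\in(f^\wedge(\p x),f^\vee(\p x))$ gives $\{f>\q x\}_{\p x,r}\toloc H^+_{0,\nu_f(\p x)}$; lifting vertically, $(\S_f)_{x,r}\toloc H^+_{0,\nu_f(\p x)}\times\R$, the half-space in $\R^n$ with outer normal $(\nu_f(\p x),0)$. This yields $\nu_{\S_f}(x)=(\nu_{S_f}(\p x),0)$ with $\nu_{S_f}(\p x)=\nu_f(\p x)$ and identifies $T_{\p x}S_f$ with the orthogonal complement of $\nu_f(\p x)$ in $\R^{n-1}$. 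The most delicate point is the full identification of $\pa^*\S_f$ inside $\G_f$: the inclusion $\pae\S_f\subset\G_f$ alone does not yield equality modulo $\H^{n-1}$ in the continuity region, and one genuinely needs the perimeter-matching encoded in the classical bounded-$BV$ structure theorem, imported here via truncation.
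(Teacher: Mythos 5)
Your strategy is genuinely different from the paper's: where the paper builds the rectifiability and jump-structure of $S_f$ from Lemma \ref{lemma fico} (a countable dense set of good levels and Federer's theorem) and then identifies $\pa^*\S_f$ by direct blow-up estimates, you truncate $f_M=\tau_M\circ f\in BV_{loc}(\R^{n-1})$, invoke the classical bounded--$BV$ jump and subgraph structure theorems, and assemble via the slab identification $\S_f\cap(\R^{n-1}\times(-R,R))=\S_{f_M}\cap(\R^{n-1}\times(-R,R))$. This is an attractive reduction; the paper remarks that the classical theorem does not directly apply in the $GBV_*$/infinite-valued setting, and your truncation trick is precisely the device needed to import it. The blow-up argument for the normal via Proposition \ref{lemma nonciserve} is the same as the paper's in spirit (although beware: $(\S_f)_{x,r}\toloc H^-_{0,(\nu_f(\p x),0)}$, not $H^+$; since the statement only asserts $\nu_{\S_f}(x)$ is \emph{some} unit normal to $T_{\p x}S_f$, this sign slip is harmless).

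There is, however, a genuine gap in your rectifiability step. You prove $J_{f_M}=J_f$ only on $\{|f^\vee|,|f^\wedge|<M\}$, so a countable union gives $\H^{n-2}(S_f\setminus J_f)=0$ only on $S_f\cap\{|f^\vee|<\infty\}\cap\{|f^\wedge|<\infty\}$. But $S_f$ genuinely contains points with $f^\vee(z)=+\infty$ and $f^\wedge(z)\in\R$ (respectively $f^\wedge(z)=-\infty$, $f^\vee(z)\in\R$); indeed, in the application $f=\Psi\circ v$ these correspond to $\{v^\wedge=0\}$ and $\{v^\vee=1\}$, precisely the sets driving Theorem \ref{thm gauss}. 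Your parenthetical ``$f^\vee=\pm\infty$ forces $f^\wedge=\pm\infty$'' only rules out $f^\vee=-\infty$ or $f^\wedge=+\infty$ on $S_f$, not these mixed cases. The argument can be repaired: for $\H^{n-2}$-a.e.\ such $z$ one has $z\in J_{f_M}$ for every large $M\in\N$ simultaneously, with a common jump direction $\nu$ (because $\{f_M>t\}=\{f>t\}$ for $|t|<M$ and nested level sets share their normal); then $\theta(\{f<M-\e\}\cap H^+_{z,\nu};z)=0$ for all $M$ and $\e>0$ yields $\aplim(f,H^+_{z,\nu},z)=+\infty=f^\vee(z)$, hence $z\in J_f$. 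But this extra step must be spelled out, as it is exactly the infinite-valued case the paper's own Lemma \ref{lemma fico} handles uniformly. Similarly, your \eqref{Sf1}--\eqref{Sf0} are first established only as inclusions; the upgrade to $\H^{n-1}$-equivalences should be explicitly deduced after \eqref{bonn1}--\eqref{bonn2} using Federer's theorem, as the paper does in its step four.
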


\begin{remark}
  {\rm Here and in the following, $A=_{\H^k}B$ stands for $\H^k(A\Delta B)=0$.}
\end{remark}

Proposition \ref{proposition fine bv} is in turn based on the following lemma, that will play a crucial role also in the proof of Theorem \ref{thm gauss}.

\begin{lemma}
  \label{lemma fico}
  If $f:\R^{n-1}\to\R\cup\{\pm\infty\}$ is a Lebesgue measurable function, $I$ is a countable dense subset of $\R$ with the property that $\{f>t\}$ is of locally finite perimeter for every $t\in I$, and if we set
  \[
  N_f=\bigcup_{t\in I}\pae \{f>t\}\setminus\pa^*\{f>t\}\,,
  \]
  then $\H^{n-2}(N_f)=0$, and for every $z\in S_f\setminus N_f$ there exists $\nu(z)\in S^{n-2}$ such that
  \[
  z\in\pa^J\{f>t\}\,,\qquad\forall t\in(f^\wedge(z),f^\vee(z))\,,
  \]
  with jump direction $\nu(z)$. (In other words, the jump direction of $\{f>t\}$ at $z$ is independent of $t$). In particular, we have
  \[
  S_f\setminus N_f\subset J_f\,,\qquad \H^{n-2}(S_f\setminus J_f)=0\,.
  \]
\end{lemma}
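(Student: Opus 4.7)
My plan breaks the lemma into three tasks tackled in order: the measure bound on $N_f$, the identification of a single direction $\nu(z)$ controlling all super-level sets at points of $S_f\setminus N_f$, and the promotion of this to $z\in J_f$. The first task is immediate: each $\{f>t\}$ with $t\in I$ is of locally finite perimeter in $\R^{n-1}$, so Federer's theorem yields $\H^{n-2}(\pae\{f>t\}\setminus\pa^*\{f>t\})=0$, and countability of $I$ gives $\H^{n-2}(N_f)=0$.

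For the second task I would fix $z\in S_f\setminus N_f$ and first verify $z\in\pae\{f>t\}$ for every $t\in(f^\wedge(z),f^\vee(z))$: unwrapping the definitions from section \ref{section approximate limits}, the relation $t<f^\vee(z)$ excludes $z\in\{f>t\}^{(0)}$ and $t>f^\wedge(z)$ excludes $z\in\{f>t\}^{(1)}$. For $t\in I\cap(f^\wedge(z),f^\vee(z))$, the assumption $z\notin N_f$ upgrades this to $z\in\pa^*\{f>t\}$, producing a blow-up $\{f>t\}_{z,r}\toloc H^+_{0,\nu_t}$ for some $\nu_t\in S^{n-2}$. Given $t_1<t_2$ in $I\cap(f^\wedge(z),f^\vee(z))$, the set-theoretic inclusion $\{f>t_2\}\subset\{f>t_1\}$ is preserved by blow-up, so $H^+_{0,\nu_{t_2}}\subset H^+_{0,\nu_{t_1}}$ modulo a Lebesgue null set; two half-spaces through the origin related this way must coincide, hence $\nu_{t_1}=\nu_{t_2}=:\nu(z)$. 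For an arbitrary $t\in(f^\wedge(z),f^\vee(z))$, density of $I$ supplies $t_1<t<t_2$ in $I\cap(f^\wedge(z),f^\vee(z))$, and the sandwich $\{f>t_2\}_{z,r}\subset\{f>t\}_{z,r}\subset\{f>t_1\}_{z,r}$, whose outer $\toloc$-limits both equal $H^+_{0,\nu(z)}$, forces $\{f>t\}_{z,r}\toloc H^+_{0,\nu(z)}$. Thus $z\in\pa^J\{f>t\}$ with jump direction $\nu(z)$.

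To conclude the third task, I would invoke Proposition \ref{lemma nonciserve} in the form \eqref{jump point level sets}: the limit $\{f>\tau\}_{z,r}\toloc H^+_{0,\nu(z)}$ for $\tau\in(f^\wedge(z),f^\vee(z))$ is already established, and for the companion limit $\{f<\tau\}_{z,r}\toloc H^-_{0,\nu(z)}$ I would pick $t\in(f^\wedge(z),\tau)$ and use the disjoint decomposition $\{f<\tau\}=\{f\le t\}\sqcup\{t<f<\tau\}$. The first summand blows up to the complement of $H^+_{0,\nu(z)}$, which agrees $\H^{n-1}$-almost everywhere with $H^-_{0,\nu(z)}$; the second is contained in $\{f>t\}\setminus\{f>\tau\}$, whose blow-up vanishes in the local sense because $\{f>t\}_{z,r}$ and $\{f>\tau\}_{z,r}$ converge to the same half-space. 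Summing yields the required limit and hence $z\in J_f$ with $\nu_f(z)=\nu(z)$. In particular $S_f\setminus N_f\subset J_f$, and $\H^{n-2}(S_f\setminus J_f)\le\H^{n-2}(N_f)=0$.

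The main obstacle is the step inside the second task where set-theoretic containment of super-level sets is converted, through $\toloc$-blow-up, into equality of the two limit half-spaces: this rigidity is what forces a single direction $\nu(z)$ to serve every $t\in(f^\wedge(z),f^\vee(z))$. Once secured, the rest of the argument is soft, reducing to density of $I$ and bookkeeping with the sandwich principle for local convergence.
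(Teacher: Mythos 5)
Your proposal is correct and follows essentially the same route as the paper: Federer's theorem gives $\H^{n-2}(N_f)=0$, the rigidity of nested blow-up half-spaces forces a single $\nu(z)$ on $I\cap(f^\wedge(z),f^\vee(z))$, and the sandwich principle extends the blow-up to all intermediate $t$. The only difference is cosmetic: the paper invokes the known fact that $E\subset F$ implies $\nu_E=\nu_F$ on $\pa^*E\cap\pa^*F$ as a black box, whereas you re-derive it from containment of limiting half-spaces, and you spell out the companion sub-level blow-up needed to invoke Proposition \ref{lemma nonciserve}, which the paper leaves implicit.
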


\begin{remark}
  {\rm Notice that the set $N_f$ depends also on the choice of $I$.}
\end{remark}

\begin{proof}
  [Proof of Lemma \ref{lemma fico}] By Federer's theorem, $\H^{n-2}(N_f)=0$. We now notice that,
\[
\left\{
\begin{array}{l}
  z\in S_f\,,
  \\
  f^\wedge(z)<t<s<f^\vee(z)\,,
\end{array}
\right .
\qquad\Rightarrow\qquad z\in \pae \{f>t\}\cap\pae \{f>s\}\,.
\]
By taking into account that $z\in S_f\setminus N_f$ if and only if $z\in S_f$ and for every $t\in I$ either $z\not\in\pae \{f>t\}$ or $z\in\pa^*\{f>t\}$, we thus find
\begin{eqnarray}\nonumber
\left\{
\begin{array}{l}
  z\in S_f\setminus N_f\,,
  \\
  f^\wedge(z)<t<s<f^\vee(z)\,,
  \\
  t,s\in I\,,
\end{array}
\right .
&&\quad\Rightarrow\qquad z\in \pa^*\{f>t\}\cap\pa^*\{f>s\}
\\\nonumber
&&\quad\Rightarrow\qquad \{f>t\}_{z,r}\toloc H_{0,\nu(z)}^+\,,\quad \{f>s\}_{z,r}\toloc H_{0,\nu(z)}^+\,,
\\\label{c}
&&\quad\qquad\qquad\mbox{where $-\nu(z)=\nu_{\{f>t\}}(z)=\nu_{\{f>s\}}(z)$\,,}
\end{eqnarray}
as $E\subset F$ implies indeed that $\nu_E=\nu_F$ on $\pa^*E\cap\pa^*F$. In other words, for every $z\in S_f\setminus N_f$ there exists $\nu(z)\in S^{n-2}$ such that
\[
\{f>t\}_{z,r}\toloc H_{0,\nu(z)}^+\,,\qquad \forall t\in I\cap(f^\wedge(z),f^\vee(z))\,.
\]
Finally, if $z\in S_f\setminus N_f$ with $f^\wedge(z)<t<f^\vee(z)$, then we may pick $s,s'\in I$ with $f^\wedge(z)<s<t<s'<f^\vee(z)$ and use
\[
\{f>s\}_{z,r}\toloc H_{0,\nu(z)}^+\,,\qquad \{f>s'\}_{z,r}\toloc H_{0,\nu(z)}^+\,,
\]
to infer $\{f>t\}_{z,r}\toloc H_{0,\nu(z)}^+$. Indeed, as a general fact, if $E_h\subset F_h\subset G_h$ with $E_h\to E$ and $G_h\to E$ as $h\to\infty$, then $F_h\to E$ as $h\to\infty$.
\end{proof}

\begin{proof}[Proof of Proposition \ref{proposition fine bv}] {\it Step one}\,: We show that $S_f$ is countably $\H^{n-2}$-rectifiable. Let $I\subset\R$ be a countable dense set in $\R$ such that for every $t\in I$ we have $\{f>t\}$ of locally finite perimeter in $\R^{n-1}$. By Federer's theorem, if $t\in I$, then $\pa^*\{f>t\}$ is locally $\H^{n-2}$-rectifiable, with $\H^{n-2}(\pae \{f>t\}\setminus\pa^*\{f>t\})=0$. Since $t<f^\vee(z)$ gives $\theta^*(\{f>t\},z)>0$, while $t>f^\wedge(z)$ implies $\theta_*(\{f>t\},z)<1$, we find that for every $t\in\R$
\[
\Big\{z\in\R^{n-1}:f^\vee(z)>t>f^\wedge(z)\Big\}\subset\pae \{f>t\}\,,
\]
so that, as $I$ is dense in $\R$,
\[
S_f\subset \bigcup_{t\in I}\Big\{z\in\R^{n-1}:f^\vee(z)>t>f^\wedge(z)\Big\}\subset\bigcup_{t\in I}\pae \{f>t\}\,.
\]
Thus $S_f$ is countably $\H^{n-2}$-rectifiable, as, by Federer's theorem and since $I$ is countable,
\[
\H^{n-2}\Big(S_f\setminus \bigcup_{t\in I}\pa^*\{f>t\}\Big)=0\,.
\]

\medskip

\noindent {\it Step two}\,: We prove that 
\begin{eqnarray}\label{a}
\pae\S_f\cap(S_f\times\R)&\subset_{\H^{n-1}}&\Gv_f\,,
\\\label{b}
\pae\S_f\cap(S_f^c\times\R)&\subset&\Big\{x\in\R^n:\q x=f^\wedge(\p x)=f^\vee(\p x)\Big\}\,,
\\\label{brigthon a}
\{x\in\R^n:\q x<f^\wedge(\p x)\}&\subset&\S_f^{(0)}\,,
\\\label{brigthon b}
\{x\in\R^n:\q x>f^\vee(\p x)\}&\subset&\S_f^{(1)}\,.
\end{eqnarray}
We start proving \eqref{brigthon a}: if $x\in\R^n$ is such that $\q x<f^\wedge(\p x)$, then $f^\wedge(\p x)>-\infty$ and, taking $t^*>\q x$ with $\theta(\{f<t^*\},\p x)=0$, for every $r<t^*-\q x$ we find
\begin{eqnarray*}
\H^n(\S_f\cap\C_{x,r})&=&\int_{\q x-r}^{\q x+r}\H^{n-1}\Big(\{f<s\}\cap \D_{\p x,r}\Big)\,ds
\\
&\le& 2r\,\H^{n-1}\Big(\{f<t^*\}\cap \D_{\p x,r}\Big)=o(r^{n})\,.
\end{eqnarray*}
This proves \eqref{brigthon a}, and \eqref{brigthon b} follows similarly. As a consequence, $\pae \S_f\subset\G_f$, from which \eqref{b} follows, as well as that $\pae\S_f\cap(S_f\times\R)\subset \G_f\cap(S_f\times\R)$. This last inclusion implies \eqref{a}, as
\[
\Big(\G_f\cap(S_f\times\R)\Big)\setminus\Gv_f=\Big\{(z,f^\wedge(z)):z\in S_f\Big\}\cup\Big\{(z,f^\vee(z)):z\in S_f\Big\}\,,
\]
is $\H^{n-1}$-negligible (indeed, it projects twice over the countably $\H^{n-2}$-rectifiable set $S_f$).

\medskip

\noindent {\it Step three}\,: Let now $N_f$ be as in Lemma \ref{lemma fico}. We claim that, if $z\in S_f\setminus N_f$ and $f^\wedge(z)<t<f^\vee(z)$ (so that $z\in\pa^J\{f>t\}$ for every such $t$, with constant jump direction $\nu(z)\in S^{n-1}\cap\R^{n-1}$), then $(z,t)\in\pa^J\S_f$ with jump direction given by $(-\nu(z),0)$; in particular,
\begin{equation}\label{tizio}
  \Gv_f\cap\Big((S_f\setminus N_f)\times\R\Big)\subset\pa^J\S_f\,.
\end{equation}
Indeed, if $t_0,t_1\in I$ are such that $f^\wedge(z)<t_0<t<t_1<f^\vee(z)$, then for $r$ small enough,
\begin{eqnarray*}
  &&\H^n\Big(\Big(\S_f\Delta H_{(z,t),(-\nu(z),0)}^+\Big)\cap\C_{(z,t),r}\Big)
  \\
  &=&\int_{t-r}^{t+r}\H^{n-1}(\D_{z,r}\cap H_{z,-\nu(z)}^-\cap\{f<s\})\,+\H^{n-1}(\D_{z,r}\cap H_{z,-\nu(z)}^+\cap\{f\ge s\})\,ds
  \\
  &\le& 2r\,\H^{n-1}(\D_{z,r}\cap H_{z,\nu(z)}^+\cap\{f<t_1\})
  +2r\,\H^{n-1}(\D_{z,r}\cap H_{z,\nu(z)}^-\cap\{f\ge t_0\})=o(r^{n})\,,
\end{eqnarray*}
as $\{f<t_1\}_{z,r}\toloc H_{z,\nu(z)}^-$ and $\{f\ge t_0\}_{z,r}\toloc H_{z,\nu(z)}^+$. We conclude by Federer's theorem.

\medskip

\noindent {\it Step four}\,: By \eqref{b}, \eqref{brigthon a}, \eqref{brigthon b}, and by Federer's theorem we deduce \eqref{bonn1}. By \eqref{a}, \eqref{tizio}, and by Federer's theorem, we prove \eqref{bonn2}. Finally, a last application of Federer's theorem allows to deduce \eqref{Sf1} and \eqref{Sf0} from \eqref{bonn1}, \eqref{bonn2}, \eqref{brigthon a}, and \eqref{brigthon b}.
\end{proof}

Recall that, if $M\subset\R^n$ and $z\in\R^{n-1}$, then $M_z=\{t\in\R:(z,t)\in M\}$. As a corollary of Proposition \ref{proposition fine bv} we thus find the following statement.

\begin{corollary}\label{remark F1}
  If $f\in GBV_*(\R^{n-1})$ and $N_f$ is defined as in Lemma \ref{lemma fico}, then for every $z\in S_f\setminus N_f$ we have
  \begin{eqnarray}\label{inclusioni tizio}
    (\Gv_f)_z=(f^\wedge(z),f^\vee(z))&\subset&\Big(\pa^J\S_f\cap(S_f\times\R)\Big)_z
    \\\nonumber
    &\subset&\Big(\pae \S_f\cap(S_f\times\R)\Big)_z
    \subset[f^\wedge(z),f^\vee(z)]\,.
  \end{eqnarray}
  In particular, for every Borel set $A\subset S_f$ we have
  \[
  P_\g(\S_f;A\times\R)=\int_A\,\int_{f^\wedge(z)}^{f^\vee(z)}\,d\H^1_\g(t)\,d\H^{n-2}_\g(z)\,.
  \]
\end{corollary}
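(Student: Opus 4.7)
The four inclusions are essentially unpacking what has already been established. The identity $(\Gv_f)_z=(f^\wedge(z),f^\vee(z))$ is simply the definition of $\Gv_f$ sectioned at $z$. The inclusion $(f^\wedge(z),f^\vee(z))\subset(\pa^J\S_f\cap(S_f\times\R))_z$ for $z\in S_f\setminus N_f$ is obtained by slicing \eqref{tizio} from Step three of Proposition \ref{proposition fine bv}, which asserts precisely $\Gv_f\cap((S_f\setminus N_f)\times\R)\subset\pa^J\S_f$. The middle inclusion is the general chain $\pa^J\S_f\subset\pae\S_f$ recorded in \eqref{inclusioni frontiere}. Finally, the inclusions \eqref{brigthon a}--\eqref{brigthon b} from Step two of Proposition \ref{proposition fine bv} give $\pae\S_f\subset\G_f$ as a whole, whose section at $z$ is $[f^\wedge(z),f^\vee(z)]$.

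For the perimeter formula, I first write $P_\g(\S_f;A\times\R)=\H^{n-1}_\g(\pae\S_f\cap(A\times\R))$. Since $A\subset S_f$, by \eqref{bonn2} we have $\pae\S_f\cap(A\times\R)=_{\H^{n-1}}\Gv_f\cap(A\times\R)$. Because $S_f$ is countably $\H^{n-2}$-rectifiable (Proposition \ref{proposition fine bv}), the relation \eqref{federer 3.2.23} yields
\[
\H^{n-1}\llcorner(S_f\times\R)=(\H^{n-2}\llcorner S_f)\times\H^1,
\]
so that $S_f\times\R$ is a countably $\H^{n-1}$-rectifiable set on which the $\H^{n-1}$-measure decomposes as a product. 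Combining this with the factorization of the Gaussian weight, $e^{-|x|^2/2}=e^{-|\p x|^2/2}e^{-(\q x)^2/2}$ for $x=(\p x,\q x)$, and the corresponding splitting $(2\pi)^{-(n-1)/2}=(2\pi)^{-(n-2)/2}(2\pi)^{-1/2}$, I obtain by Fubini
\[
\H^{n-1}_\g\bigl(\Gv_f\cap(A\times\R)\bigr)=\int_A\H^1_\g\bigl((\Gv_f)_z\bigr)\,d\H^{n-2}_\g(z).
\]
Using $(\Gv_f)_z=(f^\wedge(z),f^\vee(z))$ for every $z\in S_f$ (in particular for $\H^{n-2}$-a.e.\ $z\in A$, since the exceptional set $N_f$ is $\H^{n-2}$-negligible by Lemma \ref{lemma fico}), the formula follows.

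The only point requiring a little care is the justification of Fubini in the weighted Gaussian setting: this is standard once one has the product decomposition \eqref{federer 3.2.23} for $\H^{n-1}\llcorner(S_f\times\R)$, together with the multiplicative structure of the Gaussian density with respect to the splitting $\R^n=\R^{n-1}\times\R$. All the other steps are direct consequences of inclusions and equalities already proved in Proposition \ref{proposition fine bv} and Lemma \ref{lemma fico}.
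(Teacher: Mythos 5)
Your argument is correct and follows essentially the same route as the paper: the four inclusions are read off from \eqref{tizio}, \eqref{inclusioni frontiere}, and $\pae\S_f\subset\G_f$, and the perimeter formula is obtained by combining the product decomposition \eqref{federer 3.2.23} with the Gaussian tensorization. The only cosmetic difference is that you replace $\pae\S_f\cap(A\times\R)$ by $\Gv_f\cap(A\times\R)$ via \eqref{bonn2} before applying Fubini, whereas the paper applies Fubini directly and then uses the sandwich \eqref{inclusioni tizio} (together with $\H^{n-2}(N_f)=0$) to identify $\H^1_\g((\pae\S_f)_z)=\H^1_\g((f^\wedge(z),f^\vee(z)))$; these are equivalent computations.
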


\begin{proof}
  The first inclusion in \eqref{inclusioni tizio} follows immediately from \eqref{tizio}, while the second inclusion is immediate from \eqref{inclusioni frontiere}. The third inclusion follows of course from $\pae \S_f\subset\G_f$. Finally, since $S_f$ is countably $\H^{n-2}$-rectifiable, \eqref{federer 3.2.23} implies $\H^{n-1}\llcorner (S_f\times\R)=(\H^{n-2}\llcorner S_f)\times\H^1$. Thus, is $A$ is a Borel set with $A\subset S_f$, then by \eqref{inclusioni tizio} we find
  \begin{eqnarray*}
    P_\g(\S_f;A\times\R)&=&\H^{n-1}_\g(\pae \S_f\cap(A\times\R))
    =\int_A\,\H^1_\g((\pae \S_f)_z)\,d\H^{n-2}_\g(z)
    \\
    &=&\int_A\,\int_{f^\wedge(z)}^{f^\vee(z)}\,d\H^1_\g(t)\,d\H^{n-2}_\g(z)\,,
  \end{eqnarray*}
  where the tensorization property of $e^{-|x|^2/2}$ was also taken into account.
\end{proof}

\subsection{Proof of Theorem \ref{thm gauss}: (ii) implies (i)}\label{section gauss ii implica i} In this section we present the proof of the implication $(ii)\Rightarrow(i)$ in Theorem \ref{thm gauss}. At the end of the proof we collect some examples and remarks that should justify the rather involved technical argument we adopt.

\begin{proof}[Proof of Theorem \ref{thm gauss}, (ii) implies (i)] {\it Overview}\,: We let $v:\R^{n-1}\to [0,1]$ be a Lebesgue measurable function such that $P_\g(F[v])<\infty$ (and, therefore, $\{F[v],g(F[v])\}\subset\M(v)$). If we define $f:\R^{n-1}\to\R\cup\{\pm\infty\}$ as $f(z)=\Psi(v(z))$, $z\in\R^{n-1}$, then
\[
F[v]=\S_f=\mbox{epigraph of $f$}\,.
\]
We shall set for brevity $F=F[v]$. Since $F$ has finite Gaussian perimeter, it turns out that $F$ is of locally finite perimeter, and thus, by Proposition \ref{prop f GBV}, that $f\in GBV_*(\R^{n-1})$. Up to redefine $v$ on a $\H^{n-1}$-negligible set, we can also assume that $v$ is Borel measurable. (As noticed in the introduction, Theorem \ref{thm gauss} is stable under modifications of $v$ over $\H^{n-1}$-negligible sets.) We now consider the Borel set
\[
G=\{z\in\R^{n-1}:0<v(z)<1\}=\{z\in\R^{n-1}:f(z)\in\R\}\,,
\]
and assume that
\begin{equation}
  \label{proof ii implica i not disconnect}
  \mbox{$\{v^\wedge=0\}\cup\{v^\vee=1\}$ does not essentially disconnect $G$}.
\end{equation}
We want to prove that, if $E$ is a $v$-distributed set such that
\begin{equation}
  \label{proof ii implica i perimetri uguali}
  P_\g(E)=P_\g(F)\,,
\end{equation}
then either $\H^n(E\Delta F)=0$ or $\H^n(E\Delta g(F))=0$, where $g$ denotes the reflection with respect to $\R^{n-1}$, $g(x)=(\p x,-\q x)$, $x\in\R^n$. To this end, let us set as usual $E_z=\{t\in\R:(z,t)\in E\}$ for $z\in\R^{n-1}$, and set
\begin{eqnarray*}
  G_+&=&\Big\{z\in G:\H^1\Big(E_z\Delta (f(z),\infty)\Big)=0\Big\}\,,
  \\
  G_-&=&\Big\{z\in G:\H^1\Big(E_z\Delta (-\infty,-f(z))\Big)=0\Big\}\,,
  \\
  G_1&=&\{v=1\}=\Big\{z\in \R^{n-1}:\H^1\Big(E_z\Delta \R\Big)=0\Big\}\,,
  \\
  G_0&=&\{v=0\}=\Big\{z\in \R^{n-1}:\H^1(E_z)=0\Big\}\,.
\end{eqnarray*}
By Theorem \ref{thm cfmp1} we find that
\begin{equation}
\label{proof ii implica i E}
      E=_{\H^n}\Big(F\cap\Big((G_+\cup G_1)\times\R\Big)\Big)\cup \Big(g(F)\cap(G_-\times\R)\Big)\,,
\end{equation}
as well as that $\{G_+,G_-,G_1,G_0\}$ is a partition of $\R^{n-1}$ modulo $\H^{n-1}$, and that $\{G_+,G_-\}$ is a partition of $G$ modulo $\H^{n-1}$, where this last condition means
\[
\H^{n-1}(G\Delta(G_+\cup G_-))=0\,,\qquad\H^{n-1}(G_+\cap G_-)=0\,.
\]
Clearly, $G=\{0<v<1\}$, $G_1=\{v=1\}$, and $G_0=\{v=0\}$ are Borel sets, as $v$ is a Borel function. Notice that also $G_+$ and $G_-$ are Lebesgue measurable sets. Indeed, if we define $\b:\R^{n-1}\to\R$ as
  \[
  \b(z)=\left\{\begin{array}
    {l l}
    \frac1{v(z)}\int_{E_{z}}t\,d\g_1(t)\,,&z\in\,\{0<v\le1\}\,,
    \\
    0\,,&z\in\{v=0\}\,,
  \end{array}\right .
  \]
  (so that  $\b(z)$ is the Gaussian barycenter of $E_{z}$), then, by Fubini's theorem, $\b$ is a Lebesgue measurable function. At the same time, a simple computation shows that
  \[
  \b(z)=\frac1{\sqrt{2\pi}}\,\Big(1_{G_+}(z)\,\frac{e^{-f(z)^2/2}}{v(z)}
  -1_{G_-}(z)\,\frac{e^{-f(z)^2/2}}{v(z)}\Big)\,,\qquad\forall z\in G\cup G_1\,,
  \]
  so that $G_+=\{\b>0\}$ and $G_-=\{\b<0\}$. Thus, both $G_+$ and $G_-$ are Lebesgue measurable sets. We now look back at \eqref{proof ii implica i E}, and notice that $\H^n(E\Delta F)\,\H^n(E\Delta g(F))=0$ if and only if $\H^{n-1}(G_+)\H^{n-1}(G_-)=0$. We thus argue by contradiction, and assume that rigidity fails because of $E$, which amounts in asking that
  \begin{equation}
    \label{proof ii implica i contradiction}
      \H^{n-1}(G_+)\,\H^{n-1}(G_-)>0\,.
  \end{equation}
  In other words, $\{G_+,G_-\}$ is a non-trivial Lebesgue measurable partition of $G$. Hence, thanks to \eqref{proof ii implica i not disconnect}, by Borel regularity of the Lebesgue measure, and since $\pae A=\pae B$ if $A,B\subset\R^{n-1}$ with $\H^{n-1}(A\Delta B)=0$, we find that
  \begin{equation}
    \label{h2}
      \H^{n-2}\Big(\Big(G^{(1)}\cap\pae G_+\cap\pae G_-\Big)\setminus\Big(\{v^\wedge=0\}\cup\{v^\vee=1\}\Big)\Big)>0\,.
  \end{equation}
  Comparing \eqref{proof ii implica i E} and \eqref{h2} we see that $E$ is obtained by reflecting $F$ across a region of {\it non-trivial $\H^{n-2}$ measure} where the sections of $F$ are {\it neither negligible nor equivalent to $\R$}: correspondingly, we expect Gaussian perimeter to be increased in this operation, that is, we expect \eqref{proof ii implica i E} and \eqref{h2} to imply $P_\g(E)>P_\g(F)$, thus contradicting \eqref{proof ii implica i perimetri uguali}. The main difficulty in proving that this actually happens relies on the fact that the set $G^{(1)}\cap\pae G_+\cap\pae G_-$ may not have a reasonable metric structure, that is, it may fail to be countably $\H^{n-2}$-rectifiable. (Example \ref{example G per finito} shows that $G$ may fail to be of locally finite perimeter. Example \ref{remark koch} shows that $G^{(1)}\cap\pae G_+\cap\pae G_-$ may fail to be countably $\H^{n-2}$-rectifiable even if $v\in \Lip(\R^{n-1};[0,1])$.) We shall avoid this difficulty by showing the existence of a countably $\H^{n-2}$-rectifiable set $\S$ such that
  \[
  \S\subset\Big(G^{(1)}\cap\pae G_+\cap\pae G_-\Big)\setminus\Big(\{v^\wedge=0\}\cup\{v^\vee=1\}\Big)\,,\qquad \H^{n-2}(\S)>0\,.
  \]
  We shall then deduce that, as simple drawings suggest, $P_\g(E;\S\times\R)>P_\g(F;\S\times\R)$. Finally, by taking into account that $P_\g(E;A\times\R)\ge P_\g(F;A\times\R)$ for every Borel set $A\subset\R^{n-1}$, we shall find $P_\g(E)>P_\g(F)$. We divide this argument in nine steps.

\medskip

\noindent {\it Step one}\,: We use the information that $E$ is a set of locally finite perimeter to deduce that for every $k\in\N$ the function $u_k:\R^{n-1}\to\R$ defined as
$$
u_k=(k-|f|)\,1_{\{|f|<k\}}\,\Big(1_{G_+}-1_{G_-}\Big)\in BV_{loc}(\R^{n-1})\,.
$$
Indeed, if we take into account \eqref{proof ii implica i E} and repeat the argument in the proof of Proposition \ref{prop f GBV} with $E$ in place of $F=\S_f$, then we find
\begin{eqnarray}\nonumber
      P(E;K\times I)&\ge&
  \int_{G_+}\nabla'\vphi(z)\,dz\int_{f(z)}^\infty\,\psi'(t)dt+\int_{G_-}\nabla'\vphi(z)\,dz\int^{-f(z)}_{-\infty}\,\psi'(t)dt
  \\    \label{test psi}
  &&+\int_{G_1}\nabla'\vphi(z)\,dz\int_{-\infty}^\infty\,\psi'(t)dt\,,
\end{eqnarray}
  whenever $\vphi\in C^1_c(\R^{n-1})$ with $\spt\vphi\subset K\cc\R^{n-1}$ and $|\vphi|\le 1$, $\psi:\R\to\R$ is a Lipschitz function with $\spt\,\psi'\subset I\cc\R$ and $\Lip(\psi)\le 1$. If we apply \eqref{test psi} with $\psi$ defined by $\psi(t)=k$ for $|t|>k$ and $\psi(t)=|t|$ for $|t|\le k$,
  then we deduce our assertion by exploiting the relations (valid for every $a\in\R$)
  \[
  \int_a^\infty\psi'=(k-|a|)\,1_{(-k,k)}(a)\,,\qquad\int^{-a}_{-\infty}\psi'=-(k-|a|)\,1_{(-k,k)}(a)\,,\qquad
  \int_{-\infty}^{\infty}\psi'=0\,.
  \]

\noindent {\it Step two}\,: We show that, for every $k \in \mathbb{N}$,
$$
\Big\{ |f|^{\vee} < \frac{k}2\Big\} \cap G_+^{(1)} \subset \Big\{u_k^\wedge > \frac{k}2\Big\} \cap G_+^{(1)}\,.
$$
It suffices to prove that, if $z \in \{ |f|^{\vee} < k/2\} \cap G_+^{(1)}$ and $\e<(k/2)-|f|^\vee(z)$, then
\[
\theta( \{u_k < s \} , z ) = 0\,,\qquad\forall s < \frac{k}2+\e\,.
\]
Indeed, thanks to \eqref{dens2}, we have $\{ |f|^{\vee} < k/2 \} \subset \{ |f| < k/2 \}^{(1)}$. Thus, for every such $s$,
\begin{align*}
&\H^{n-1}\Big(\D_{z,r} \cap \{u_k < s \}\Big)
\\
\mbox{{\small ($z\in G_+^{(1)}$)}}\qquad& = \H^{n-1}\Big( \D_{z,r} \cap \{u_k < s \} \cap G_+ \Big) + o (r^{n-1}) \\
\mbox{{\small ($z\in \{ |f| < k/2 \}^{(1)}$)}}\qquad&= \H^{n-1}\Big(\D_{z,r} \cap \{u_k < s \} \cap \{ |f| < k/2 \} \cap G_+ \Big) + o (r^{n-1})
\\
&= \H^{n-1}\Big(\D_{z,r} \cap \{ k-|f| < s \} \cap \{ |f| < k/2 \} \cap G_+ \Big) + o (r^{n-1}) \\
&\le \H^{n-1}\Big(\D_{z,r} \cap \{ k - s < |f|\}\Big) + o (r^{n-1}) = o (r^{n-1})\,,
\end{align*}
where the last identity follows by definition of $|f|^\vee$ since $k-s> k/2-\e>|f|^\vee(z)$.

\medskip

\noindent {\it Step three}\,: We set
 \[
 \Sigma_k = \pae G_+\cap\pae G_-\cap\Big\{ - \frac{k}2 < f^\wedge  \leq f^\vee  < \frac{k}2 \Big\}^{(1)}\,,\qquad k\in\N\,,
 \]
 and prove that
\begin{equation}
   \label{uksalta}
 \Sigma_k \subset \left\{ u_k^\vee \ge\frac{k}2 \right\} \cap \left\{ u_k^\wedge \le-\frac{k}2 \right\}\,,\qquad\forall k\in\N\,.
 \end{equation}
To show this, we start noticing that for every $z\in\S_k$ we have
 \begin{align}\nonumber
\H^{n-1}\Big(\D_{z,r}\cap \Big\{u_k>\frac{k}2\Big\}\Big) &= \H^{n-1}\Big(\D_{z,r}\cap\Big\{u_k>\frac{k}2\Big\}^{(1)}\Big)
\\\nonumber&\ge \H^{n-1}\Big(\D_{z,r}\cap \Big\{u_k>\frac{k}2\Big\}^{(1)} \cap G_+^{(1)}\Big) \\
& \ge \H^{n-1}\Big(\D_{z,r}\cap \Big\{u_k^\wedge >\frac{k}2\Big\} \cap G_+^{(1)}\Big)\,,\label{bus}
 \end{align}
where the last inequality follows from \eqref{dens3}. Now, by step two and by \eqref{dens1},
$$
\Big\{u_k^\wedge >\frac{k}2\Big\} \cap G_+^{(1)}
\supset \Big\{ |f|^{\vee} < \frac{k}2\Big\} \cap G_+^{(1)}
=\Big\{ - \frac{k}2 < f^{\wedge} \leq f^{\vee} <\frac{k}2\Big\} \cap G_+^{(1)}\,,
$$
so that, by \eqref{bus},
 \begin{align*}
\H^{n-1}\Big(\D_{z,r}\cap \Big\{u_k>\frac{k}2\Big\}\Big)
&\ge \H^{n-1}\Big(\D_{z,r}\cap \Big\{ - \frac{k}2 < f^{\wedge} \leq f^{\vee} <\frac{k}2\Big\} \cap G_+^{(1)}\Big) \\
& = \H^{n-1}\Big(\D_{z,r}\cap G_+\Big) + o (r^{n-1}),
 \end{align*}
where in the last identity we have used the fact that $z \in \{k/2> f^\vee\ge f^\wedge>-k/2\}^{(1)}$. Since, by assumption, $z\in\pae  G_+$, we conclude that
$$
0<\theta^*( G_+ , z )\le \theta^*\Big( \Big\{u_k>\frac{k}2\Big\} , z \Big) \,,
$$
which in turn gives $u_k^\vee (z) \ge k/2$. One can prove analogously that $u_k^\wedge (z) \le - k/2$.

\medskip

\noindent {\it Step four}\,: We show that, for every $k \in \mathbb{N}$,
 $$
 \Sigma_k \text{ is locally } \mathcal{H}^{n-2}\text{-rectifiable}.
 $$
 From step three we have that $\Sigma_k \subset S_{u_k}$.
 Being $u_k \in BV_{loc} (\mathbb{R}^{n-1})$, this imples that $\Sigma_k$
 is countably $\H^{n-2}$-rectifiable, and we are only left to show that $\S_k$ is locally $\H^{n-2}$-finite.
 To this end, let $K \subset \mathbb{R}^{n-1}$ be a compact set; since
 $$
 \Sigma_k =   \left[ \Sigma_k \cap (S_{u_k} \setminus J_{u_k}) \right] \cup \left( \Sigma_k \cap J_{u_k} \right)
 $$
 and $\mathcal{H}^{n-2} (S_{u_k} \setminus J_{u_k}) = 0$, we have
 $$
 \mathcal{H}^{n-2} (\Sigma_k \cap K) =  \mathcal{H}^{n-2} (\Sigma_k \cap J_{u_k} \cap K)\in[0,\infty]\,.
 $$
 By step three and since $u_k \in BV_{loc} (\mathbb{R}^{n-1})$,
 \begin{align*}
k\, \mathcal{H}^{n-2} (\Sigma_k \cap J_{u_k} \cap K)
\leq \int_{\Sigma_k \cap J_{u_k} \cap K} ( u_k^\vee - u_k^\wedge ) \, d \mathcal{H}^{n-2}
\le |D^j u_k| (K)\,.
 \end{align*}
 Thus, if $K\subset\R^{n-1}$ is compact and $k\in\N$, then $\H^{n-2}(K\cap\Sigma_k)\le k^{-1}\, |D^j u_k| (K)<\infty$. This proves $\Sigma_k$ is locally $\H^{n-2}$-finite.

  \medskip

  \noindent {\it Step five}\,: We are now going to deduce from \eqref{h2} that, for $k$ sufficiently large, we have
  \begin{equation}
    \label{step six}
    \H^{n-2}(\S_k)>0\,.
  \end{equation}
  We start proving the following identity,
  \begin{eqnarray}\label{austin2013 0}
  \bigcup_{k \in \mathbb{N}} \Sigma_k =\Big(\pae  G_+ \cap \pae  G_-\Big) \setminus \Big(\{f^\vee=\infty\}\cup \{f^\wedge=-\infty\}\Big)\,.
  \end{eqnarray}
  Indeed, by definition of $\S_k$, and by repeatedly applying \eqref{dens2} and \eqref{dens3},
  \begin{eqnarray}\nonumber
     \Sigma_k &=& \pae G_+\cap\pae G_-\cap\Big\{ - \frac{k}2 < f^\wedge  \leq f^\vee  < \frac{k}2 \Big\}^{(1)}
     \\\nonumber
     &\subset&\pae G_+\cap\pae G_-\cap\Big\{ - \frac{k}2 \le f^\wedge  \leq f^\vee  \le \frac{k}2 \Big\}
     \\\nonumber
     &\subset&\pae G_+\cap\pae G_-\cap\Big\{ - \frac{k+1}2 <f^\wedge  \leq f^\vee  <\frac{k+1}2 \Big\}
     \\\label{catenaSk}
     &\subset&\pae G_+\cap\pae G_-\cap\Big\{ - \frac{k+1}2 <f^\wedge  \leq f^\vee  <\frac{k+1}2 \Big\}^{(1)}=\S_{k+1}\,,
  \end{eqnarray}
  from which \eqref{austin2013 0} immediately follows. Since $f=\Psi(v)$ with $\Psi$ continuous and decreasing, and thanks to \eqref{continuous and decreasing}, we have $\{f^\vee=\infty\}=\{v^\wedge=0\}$ and $\{f^\wedge=-\infty\}=\{v^\vee=1\}$, so that \eqref{austin2013 0} is equivalent to
  \begin{equation}
    \label{inclusion filippo}
      \bigcup_{k \in \mathbb{N}} \Sigma_k =\Big( \pae  G_+ \cap \pae  G_-\Big) \setminus \left( \{ v^{\wedge} = 0 \} \cup \{  v^{\vee} = 1 \} \right)\,.
  \end{equation}
  Finally, by \eqref{inclusion filippo}, \eqref{catenaSk},  and \eqref{h2}, we find
  $$
  \lim_{k \to \infty} \mathcal{H}^{n-2} ( \Sigma_k ) =  \mathcal{H}^{n-2} \Big(\Big(\pae  G_+
    \cap \pae  G_-\Big) \setminus \Big( \{ v^{\wedge} = 0 \} \cup \{  v^{\vee} = 1 \} \Big) \Big) > 0\,.
  $$

  \medskip

\noindent {\it Step six}\,: We show here that, if $W\subset\S_k$ is a Borel set, then
\begin{equation}
  \label{brisket}
P_\g(F;W\times\R)=\int_{W}\,d\H^{n-2}_\g(z)\,\int_{f^\wedge(z)}^{f^\vee(z)}\,d\H^1_\g\,.
\end{equation}
Indeed, \eqref{brisket} follows immediately by Corollary \ref{remark F1} provided $W\subset S_f$. Since the right-hand side of \eqref{brisket} is trivially equal to zero if $W\subset S_f^c$, we are left to prove that
\[
P_\g(F;(\S_k\cap S_f^c)\times\R)=0\,.
\]
To this end, we notice that, by Proposition \ref{proposition fine bv},
\[
\pae F\cap(S_f^c\times\R)\subset_{\H^{n-1}}\Big\{x\in\R^n:\p x\in S_f^c\,,\q x=f^\wedge(\p x)=f^\vee(\p x)\Big\}\,.
\]
If $L$ denotes the set on the right-hand side of this last inclusion, then $\H^0(L_z)=1$ for every $z\in S_f^c$. As $\S_k$ is countably $\H^{n-2}$-rectifiable, by \eqref{federer 3.2.23} we find that
\begin{eqnarray*}
P_\g(F;(S_f^c\cap\S_k)\times\R)&=&\H^{n-1}_\g\Big(\pae F\cap\Big((S_f^c\cap\S_k)\times\R\Big)\Big)
\\
&\le&\H^{n-1}_\g\Big(L\cap\Big((S_f^c\cap\S_k)\times\R\Big)\Big)
=\int_{S_f^c\cap\S_k}\,\H^1_\g(L_z)\,d\H^{n-2}_\g(z)=0\,.
\end{eqnarray*}
We have thus completed the proof of \eqref{brisket}.

\medskip

\noindent {\it Step seven}\,: We show that, if $z\in \S_k\setminus N_{u_k}$ (with $N_{u_k}$ defined as in Lemma \ref{lemma fico}), then there exists $\nu\in S^{n-1}\cap\R^{n-1}$ such that
\begin{eqnarray}
  \label{jeez1}
  &&(G_+)_{z,r}\toloc H_{0,\nu}^+\,,\qquad\hspace{0.5cm} (G_-)_{z,r}\toloc H_{0,\nu}^-\,,
  \\\label{jeez2}
  &&\{u_k>t\}_{z,r}\toloc H_{0,\nu}^+\,,\qquad\forall t\in(u_k^\wedge(z),u^\vee_k(z))\,.
\end{eqnarray}
By \eqref{austin2013 0} and since $\H^{n-2}(N_{u_k})=0$,  this will imply in particular that
\begin{equation}
  \label{austin2013}
  \S_k\subset_{\H^{n-2}}\pa^JG_+\cap\pa^JG_-\cap\{|f|^\vee<\infty\}\,.
\end{equation}
We first recall that, by Lemma \ref{lemma fico}, if $z\in S_{u_k}\setminus N_{u_k}$, then there exists $\nu=\nu(z)\in S^{n-2}$ such that \eqref{jeez2} holds true.
Now, we easily find that
\begin{eqnarray*}
\{u_k>t\}=G_+\cap\{|f|<k-t\}\,,\qquad\forall t>0\,,
\end{eqnarray*}
which in particular gives,
\[
z\in \bigcap_{0<t<u^\vee_k(z)}\,\pa^J\Big(G_+\cap\Big\{|f|<k-t\Big\}\Big)\,.
\]
Since, by \eqref{uksalta}, $u_k^\vee(z)\ge k/2$ for every $z\in\S_k$, for $\e$ small enough we find that
\begin{eqnarray*}
  \S_{k}\setminus N_{u_k}\subset\,\pa^J\Big(G_+\cap\Big\{|f|<k-\Big(\frac{k}2-\e\Big)\Big\}\Big)
  =\pa^J\Big(G_+\cap\Big\{|f|<\frac{k}2+\e\Big\}\Big)\,.
\end{eqnarray*}
Taking now into account that $\pa^J(A\cap B)\cap B^{(1)}\subset (\pa^JA)\cap B^{(1)}$, we thus find
\begin{eqnarray*}
  \Big(\S_{k}\setminus N_{u_k}\Big)\cap\Big\{|f|<\frac{k}2+\e\Big\}^{(1)}\subset\pa^JG_+\,.
\end{eqnarray*}
Finally, since $\S_k\subset\{|f|<(k/2)+\e\}^{(1)}$, we conclude that $\S_k\setminus N_{u_k}\subset \pa^JG_+$. One proves analogously the inclusion in $\pa^JG^-$.

\medskip

\noindent {\it Step eight}\,: We have so far proved that, if $k$ is large enough, then $\S_k$ is a locally $\H^{n-2}$-rectifiable set in $\R^{n-1}$, with $\H^{n-2}(\S_k)>0$, and $\S_k\subset\pa^JG_+\cap\pa^JG-\cap\{|f|^\vee<\infty\}$ (modulo $\H^{n-2}$). Moreover, we have computed the Gaussian perimeter of $F$ above $\S_k$. We now want to compute $P_\g(E;\S_k\times\R)$, in order to show that this last quantity is strictly larger than $P_\g(F;\S_k\times\R)$. To this end, it is convenient to divide $\S_k$ into two parts, defined by the sets $\Pi_+$ an $\Pi_-$ introduced in this and in the following step. Precisely, we start this conclusive part of our argument by considering the set $\Pi_+$ of those
\[
z\in\pa^JG_+\cap\pa^JG_-\cap\{|f|^\vee<\infty\}\cap(S_f^c\cup J_f)\,,
\]
such that, for some $\nu\in S^{n-1}\cap \R^{n-1}$,
\begin{eqnarray}
  \label{blll}
  &&(G_+)_{z,r}\toloc H_{0,\nu}^+\,,\qquad (G_-)_{z,r}\toloc H_{0,\nu}^-\,,
  \\\label{blll2}
  &&\{f>s\}\toloc H_{0,\nu}^+\,,\qquad\mbox{if $z\in J_f$ and $s\in(f^\wedge(z),f^\vee(z))$}\,.
\end{eqnarray}
We want to characterize $(\pa^JE)_z$ for $z\in\Pi_+$, by showing that
\begin{eqnarray}\label{gradi1}
(\pa^JE)_z=_{\H^1}(-\infty,-f^\wedge(z))\cup(f^\vee(z),\infty)\,,\qquad \forall z\in\Pi_+\cap\{f^\vee\ge-f^\wedge\}\,,
\\\label{gradi2}
(\pa^JE)_z=_{\H^1}(-\infty,f^\vee(z))\cup(-f^\wedge(z),\infty)\,,\qquad \forall z\in\Pi_+\cap\{f^\vee\le -f^\wedge\}\,.
\end{eqnarray}
In particular, we shall prove that, if $z\in\Pi_+$ and $f^\vee(z)\ge -f^\wedge(z)$, then
\begin{eqnarray}\label{gigi1}
(z,t)\in\pa^JE\,,&&\qquad \forall t\in\Big(-\infty,-f^\wedge(z)\Big)\cup\Big(f^\vee(z),\infty\Big)\,,
\\\label{gigi2}
(z,t)\in E^{(0)}\subset\R^n\setminus\pae E\,,&&\qquad\forall t\in(-f^\wedge(z),f^\vee(z))\,,
\end{eqnarray}
(so that \eqref{gradi1} holds true, see Figure \ref{fig pipiu}),
\begin{figure}
  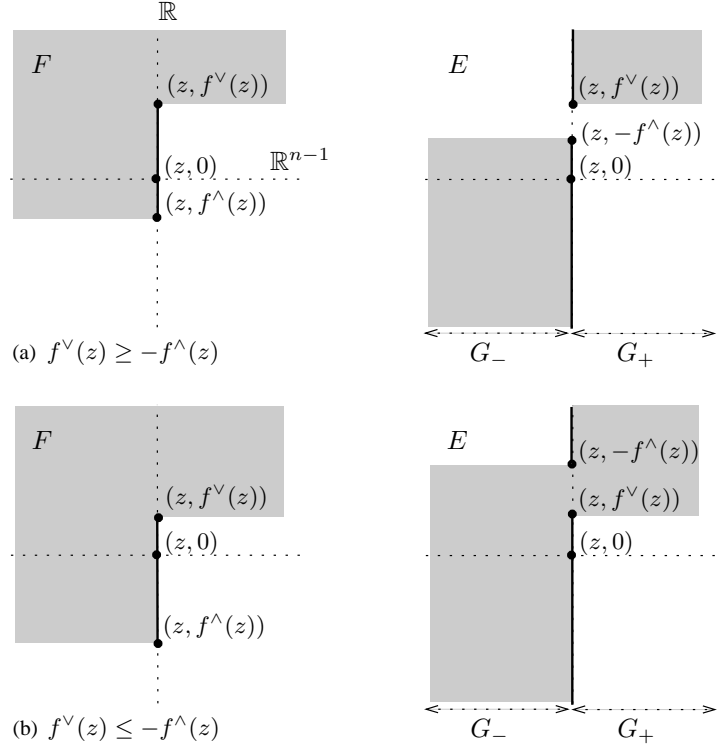\caption{{\small In panel (a) we consider the case when $z\in\Pi_+$ and $f^\vee(z)\ge-f^\wedge(z)$. In this case we must have $f^\vee(z)\ge 0$, while, of course, $f^\wedge(z)$ has arbitrary sign. Moreover, $(\pae E)_z$ is $\H^1$-equivalent to $(-\infty,-f^\wedge(z))\cup(f^\vee(z),\infty)$, see \eqref{gigi1}, and $(-f^\wedge(z),f^\vee(z))$ is $\H^1$-equivalent to $(E^{(0)})_z$, see \eqref{gigi2}. In panel (b) we consider the complementary case when $z\in\Pi_+$ and $f^\vee(z)\le-f^\wedge(z)$. In this case $(\pae E)_z$ is $\H^1$-equivalent to $(-\infty,f^\vee(z))\cup(-f^\wedge(z),\infty)$, see \eqref{gigi1xx}, while $(f^\vee(z),-f^\wedge(z))$ is $\H^1$-equivalent to $(E^{(1)})_z$, see \eqref{gigi2xx}. In both cases, of course, $(\pae F)_z$ is $\H^1$-equivalent to $(f^\wedge(z),f^\vee(z))$.}}\label{fig pipiu}
\end{figure}
while, if $z\in\Pi_+$ and $f^\vee(z)\le -f^\wedge(z)$, then
\begin{eqnarray}\label{gigi1xx}
(z,t)\in\pa^JE\,,&&\qquad \forall t\in\Big(-\infty,f^\vee(z)\Big)\cup\Big(-f^\wedge(z),\infty\Big)\,,
\\\label{gigi2xx}
(z,t)\in E^{(1)}\subset\R^n\setminus\pae E\,,&&\qquad\forall t\in(f^\vee(z),-f^\wedge(z))\,.
\end{eqnarray}
(thus proving \eqref{gradi2}, see, once again, Figure \ref{fig pipiu}). Before entering into the proof of \eqref{gigi1}, \eqref{gigi2}, \eqref{gigi1xx}, and \eqref{gigi2xx}, let us notice that \eqref{gradi1} and \eqref{gradi2} imply that
\begin{equation}
  \label{aleee}
  (\pa^JE)_z=_{\H^1} (-\infty,a(z))\cup(b(z),\infty)\,,\qquad\forall z\in\Pi_+\,,
\end{equation}
where we have set
\begin{eqnarray}\label{abz}
a(z)=\min\Big\{-f^\wedge(z),f^\vee(z)\Big\}\,,
\qquad
b(z)=\max\Big\{-f^\wedge(z),f^\vee(z)\Big\}\,.
\end{eqnarray}
We shall now provide the details of the proof of \eqref{gigi1}, noticing that \eqref{gigi2}, \eqref{gigi1xx}, and \eqref{gigi2xx}, can be proved by entirely analogous arguments. Let us thus consider $z\in\Pi_+$ with $f^\vee(z)\ge -f^\wedge(z)$, and notice that, necessarily, $f^\vee(z)\ge(f^\vee(z)+f^\wedge(z))/2\ge 0$. We now consider two separate cases.

\smallskip

\noindent {\it Proof of \eqref{gigi1} when $t>f^\vee(z)$}\,: Let $r_*>0$ be such that $t-r_*>f^\vee(z)$, so that
\begin{eqnarray}\label{rstar}
  \{f<s\}_{z,r}\toloc\R^{n-1}\,,\qquad \{f<-s\}_{z,r}\toloc\emptyset\,,\qquad\forall s\in[t-r_*,t+r_*]\,,
\end{eqnarray}
thanks to the fact that $f^\vee(z)\ge 0$. Since $z\in G_+^{(1/2)}\cap G_-^{(1/2)}\subset G_1^{(0)}\cap G_0^{(0)}$, we have that
\begin{equation}
  \label{houston1}
  \H^n\Big(\C_{(z,t),r}\cap\Big((G_1\cup G_0)\times\R\Big)\Big)=o(r^n)\,;
\end{equation}
moreover, if $r<r_*$, then by \eqref{rstar} and by \eqref{blll} we have
\begin{eqnarray}  \label{houston2}
  \H^n\Big(E\cap\C_{(z,t),r}\cap(G_-\times\R)\Big)&=&\int_{t-r}^{t+r}\H^{n-1}\Big(G_-\cap \{f<-s\}\cap\D_{z,r}\Big)\,ds\hspace{1cm}
  \\\nonumber
  &\le& 2r\,\H^{n-1}\Big(\{f<-(t-r_*)\}\cap\D_{z,r}\Big)=o(r^n)\,,
\end{eqnarray}
as well as,
\begin{eqnarray}\label{houston3}
  \H^n\Big(H_{(z,t),(\nu,0)}^+\cap\C_{(z,t),r}\cap(G_-\times\R)\Big)&=&2r\,\H^{n-1}\Big(H_{z,\nu}^+\cap G_-\cap\D_{z,r}\Big)=o(r^n)\,.
\end{eqnarray}
Hence, by \eqref{houston1}, \eqref{houston2}, and \eqref{houston3}, and by taking again into account \eqref{rstar} and \eqref{blll},
\begin{eqnarray*}
  &&\H^n\Big(\Big(E\Delta H_{(z,t),(\nu,0)}^+\Big)\cap\C_{(z,t),r}\Big)
  \\
  &=&o(r^n)
  +\int_{t-r}^{t+r}\H^{n-1}\Big(\Big(H_{z,\nu}^+\Delta\Big(G_+\cap\{f<s\}\Big)\Big)\cap\D_{z,r}\Big)\,ds
  \\
  &=&o(r^n)
  +\int_{t-r}^{t+r}\H^{n-1}\Big(\Big(G_+\Delta H_{z,\nu}^+\Big)\cap\D_{z,r}\Big)\,ds=o(r^n)\,.
\end{eqnarray*}
This proves that if $t>f^\vee(z)$, then $(z,t)\in\pa^JE$ with $E_{(z,t),r}\toloc H_{(0,0),(\nu,0)}^+$.

\smallskip

\noindent {\it Proof of \eqref{gigi1} when $t<-f^\wedge(z)$}\,: In the subcase that $t<-f^\vee(z)$, we immediately see (by symmetry) that $(z,t)\in\pa^JE$ with
\begin{equation}
  \label{still find}
  E_{(z,t),r}\toloc H_{(0,0),(\nu,0)}^-\,.
\end{equation}
In particular, if $z\in S_f^c$, this concludes the proof of \eqref{gigi1}. We are thus left to consider the case that $z\in J_f$ and $-f^\vee(z)< t<-f^\wedge(z)$. In this case, we still record the validity of \eqref{still find}, but this time, in the proof, we also have to take \eqref{blll2} into account: indeed, by \eqref{blll2} we have that
\[
  \{f<s\}_{z,r}\toloc H_{0,\nu}^-\,,\qquad\forall s\in(f^\wedge(z),f^\vee(z))\,,
\]
therefore, if $-f^\vee(z)< t<-f^\wedge(z)$ then there exists $r_*>0$ such that
\begin{equation}
  \label{fernando}
  \{f<-s\}_{z,r}\toloc H_{0,\nu}^-\,,\qquad\forall s\in[t-r_*,t+r_*]\,.
\end{equation}
We now notice that, since $t+r^*<-f^\wedge(z)\le f^\wedge(z)$, then $\{f<t+r_*\}_{z,r}\toloc \emptyset$, and therefore
\begin{eqnarray}  \label{houston2-1}
  \H^n\Big(E\cap\C_{(z,t),r}\cap(G_+\times\R)\Big)&\le&\int_{t-r}^{t+r}\H^{n-1}\Big(G_+\cap \{f<s\}\cap\D_{z,r}\Big)\,ds
  \\\nonumber
  &\le& 2r\,\H^{n-1}\Big(G_+\cap \{f<t+r_*\}\cap\D_{z,r}\Big)=o(r^n)\,.
\end{eqnarray}
By \eqref{blll}, we similarly have
\begin{eqnarray}
\label{houston3-1}
  \H^n\Big(H_{(z,t),(\nu,0)}^-\cap\C_{(z,t),r}\cap(G_+\times\R)\Big)=2r\,\H^{n-1}\Big(H_{z,\nu}^-\cap G_+\cap\D_{z,r}\Big)=o(r^n)\,.
\end{eqnarray}
By combining \eqref{houston2-1} and \eqref{houston3-1} with \eqref{houston1} (which holds true simply by $z\in G_+^{(1/2)}\cap G_-^{(1/2)}$), we thus find
\begin{eqnarray*}
  &&\H^n\Big(\Big(E\Delta H_{(z,t),(\nu,0)}^-\Big)\cap\C_{(z,t),r}\Big)
  \\&=&o(r^n)
  +\int_{t-r}^{t+r}\H^{n-1}\Big(\Big(H_{z,\nu}^-\Delta\Big(G_-\cap\{f<-s\} \Big)\Big)\cap\D_{z,r}\Big)\,ds
  \\&=&o(r^n)
  +\int_{t-r}^{t+r}\H^{n-1}\Big(G_-\cap\{f<-s\}\cap H_{z,\nu}^+\cap\D_{z,r}\Big)\,ds
  \\
  &&+\int_{t-r}^{t+r}\H^{n-1}\Big(\Big(H_{z,\nu}^-\setminus\Big(G_-\cap\{f<-s\}\Big)\Big)\cap\D_{z,r}\Big)\,ds
    \\&\le&o(r^n)
  +2r\,\H^{n-1}\Big(G_-\cap\{f<-(t-r_*)\}\cap H_{z,\nu}^+\cap\D_{z,r}\Big)
  \\
  &&+2r\,\H^{n-1}\Big(\Big(H_{z,\nu}^-\setminus\Big(G_-\cap\{f<-(t+r_*)\}\Big)\Big)\cap\D_{z,r}\Big)=o(r^n)\,,
\end{eqnarray*}
where in the last step we have also  \eqref{blll} and \eqref{fernando}. This concludes the proof of \eqref{gigi1}.

\medskip

\noindent {\it Step nine}\,: We finally find a contradiction. To this end, let us define $\Pi_-$ as the set of those
\[
z\in\pa^JG_+\cap\pa^JG_-\cap\{|f|^\vee<\infty\}\cap(S_f^c\cup J_f)\,,
\]
such that, for some $\nu\in S^{n-1}$,
\begin{eqnarray}
  \label{blll star}
  &&(G_+)_{z,r}\toloc H_{0,\nu}^+\,,\qquad (G_-)_{z,r}\toloc H_{0,\nu}^-\,,
  \\\label{blll2 star}
  &&\{f>s\}\toloc H_{0,\nu}^-\,,\qquad\mbox{if $z\in J_f$ and $s\in(f^\wedge(z),f^\vee(z))$}\,.
\end{eqnarray}
Let us now notice the following two facts. First, trivially,
\begin{eqnarray}\label{Sfc1}
(\Pi_+\cup\Pi_-)\cap S_f^c=\pa^JG_+\cap\pa^JG_-\cap\{|f|^\vee<\infty\}\cap S_f^c\,.
\end{eqnarray}
Second, since $J_f$ and $S_{u_k}$ are both countably $\H^{n-2}$-rectifiable sets, we have that $\nu_f=\pm\,\nu_{u_k}$ $\H^{n-2}$-a.e. on $J_f\cap S_{u_k}$, and thus by \eqref{jeez1} and \eqref{jeez2}, we find that
\begin{eqnarray}\label{Sfc2}
(\Pi_+\cup\Pi_-)\cap J_f\cap S_{u_k}=_{\H^{n-2}}\pa^JG_+\cap\pa^JG_-\cap\{|f|^\vee<\infty\}\cap J_f\cap S_{u_k}\,.
\end{eqnarray}
Since $\H^{n-2}(S_f\setminus J_f)=0$, we finally conclude that
\begin{eqnarray}\nonumber
  (\Pi_+\cup\Pi_-)\cap S_{u_k}=_{\H^{n-2}}\pa^JG_+\cap\pa^JG_-\cap\{|f|^\vee<\infty\}\cap S_{u_k}\,.
\end{eqnarray}
In particular, by  \eqref{step six} and \eqref{austin2013}, we may assume (up to replacing $E$ with $g(E)$) that
\[
\H^{n-2}(\S_k\cap\Pi_+)>0\,,
\]
for sufficiently large values of $k$. Since $\S_k$ is countably $\H^{n-2}$-rectifiable, by \eqref{federer 3.2.23} and by \eqref{aleee} we find
\begin{eqnarray*}
P_\g(E;(\S_k\cap\Pi_+)\times\R)&=&\int_{\S_k\cap\Pi_+}d\H^{n-2}_\g(z)\,\int_{(\pa^JE)_z}d\H^1_\g
\\
&=&
\int_{\S_k\cap\Pi_+}\,d\H^{n-2}_\g(z)\bigg(\int_{-\infty}^{a(z)}d\H^1_\g+\int_{b(z)}^\infty\,d\H^1_\g\bigg)\,,
\end{eqnarray*}
where $a$ and $b$ have been defined as in \eqref{abz}. Since $\H^1_\g(\R)=1$, we thus have
\begin{eqnarray*}
P_\g(E;(\S_k\cap\Pi_+)\times\R)=\int_{\S_k\cap\Pi_+}\Big(1-\g_1\Big(a(z),b(z)\Big)\Big)\,d\H^{n-2}_\g(z)\,\,,
\end{eqnarray*}
while, by \eqref{brisket},
\begin{eqnarray*}
P_\g(F;(\S_k\cap\Pi_+)\times\R)=\int_{\S_k\cap\Pi_+}\,\g_1\Big(f^\wedge(z),f^\vee(z)\Big)\,d\H^{n-2}_\g(z)\,.
\end{eqnarray*}
Since $P_\g(E;W\times\R)\ge P_\g(F;W\times\R)$ for every Borel set $W\subset\R^{n-1}$, by $P_\g(E)=P_\g(F)$ we find that
\[
P_\g(E;(\S_k\cap\Pi_+)\times\R)=P_\g(F;(\S_k\cap\Pi_+)\times\R)\,.
\]
This leads to a contradiction with the fact that $\H^{n-2}(\S_k\cap\Pi_+)>0$ and with the fact that the function
\[
\delta(\a,\b)=1-\g_1\Big(\min\{-\a,\b\},\max\{-\a,\b\}\Big)-\g_1(\a,\b)\,,\qquad \forall \beta\ge \a\,,
\]
is strictly positive on $\{(\a,\b)\in\R^2:\b\ge\a\}$. Indeed, if $-\a\le\b$, then we have
\begin{eqnarray*}
  \delta(\a,\b)&=&1-\g_1(-\a,\b)-\g_1(\a,\b)=1-\g_1(-\a,\b)-\g_1(-\b,-\a)
  \\
  &=&1-\g_1(-\b,\b)>0\,;
\end{eqnarray*}
if, instead, $-\a>\b$, then we have
\begin{eqnarray*}
  \delta(\a,\b)=1-\g_1(\b,-\a)-\g_1(\a,\b)=1-\g_1(\a,-\a)>0\,.
\end{eqnarray*}
This completes the proof of the implication $(ii)\Rightarrow(i)$.
\end{proof}

\begin{example}\label{example G per finito}
  {\rm It may happen that $v\in BV(\R^{n-1};[0,1])$ but $G=\{0<v<1\}$ is not of locally finite perimeter in $\R^{n-1}$. For example, if $n\ge 3$, take
  \[
  v(z)=\frac{|z|^2}{2}\sum_{h=1}^\infty\,1_{[1/(2h+1)^{1/(n-2)},1/(2h)^{1/(n-2)}]}(|z|)\,,\qquad z\in\R^{n-1}\,.
  \]
  In this case $G=\{0<v<1\}$ is not of locally finite perimeter, as
  \[
  \H^{n-2}(\D_r\cap\pae G)=\H^{n-2}(\D_r\cap\pa G)=(n-1)\om_{n-1}\sum_{h=h(r)}^\infty\frac1{2h}+\frac1{2h+1}=\infty\,,\qquad\forall r>0\,.
  \]
  At the same time $v\in BV(\R^{n-1};[0,1])$, as
  \[
  |Dv|(\R^{n-1})\le \sqrt{2}\H^{n-1}(G)+2(n-1)\om_{n-1}\sum_{h=1}^\infty\frac1{(2h)^{2/(n-2)}}\,\frac1{2h}<\infty\,.
  \]
  }
\end{example}


\begin{example}\label{remark koch}
  {\rm Consider an open equilateral triangle $T$ in $\R^2$, and define an increasing sequence of open sets $\{T_h\}_{h=0}^\infty$ by setting $T_0=T$; $T_1$ is obtained from $T_0$ by adding a copy of $T$ rescaled by a factor $1/3$ to the center of each side of $T_0$; and so on. In this way, the open set $A=\bigcup_{h=0}^\infty T_h$ has the well-known von Koch curve $K$ as its topological boundary. If we set
  \[
  v(z)=\min\Big\{\frac{1}2\,,\dist(z,K)\Big\}\,,\qquad z\in\R^2\,,
  \]
  then $v$ is a Lipschitz function on $\R^2$ with $G=\{0<v<1\}=\R^2\setminus K$. Notice that
  \[
  K=\{v^\wedge=0\}=\{v=0\}\subset G^{(1)}\,,\qquad\{v^\vee=1\}=\emptyset,
  \]
  that is $G^{(1)}\cap\{v^\wedge=0\}\cap\{v^\vee=1\}=K$, and thus it is not countably $\H^1$-rectifiable. (Indeed, the Hausdorff dimension of $K$ is equal to $\log(4)/\log(3)$.) In particular, given a Borel partition $\{G_+,G_-\}$ of $G$ we cannot expect the set
  \[
  G^{(1)}\cap\pae G_+\cap\pae G_-\cap\Big(\{v^\wedge=0\}\cup\{v^\vee=1\}\Big)\subset K\,,
  \]
  to possess any rectifiability property. Notice also that, in this example, $K=\{v^\wedge=0\}$ essentially disconnects $\{0<v<1\}$, as it is seen by considering the non-trivial Borel partition $\{G_+,G_-\}$ of $G$ defined by $G_+=A$ and $G_-=\R^2\setminus\ov{A}$. (Indeed, we easily find that  $\pae G_+=\pae G_-\subset K$.) Also, by Theorem \ref{thm gauss}, we expect rigidity to fail. A counterexample to rigidity is obtained by setting
  \[
  E=\Big(F\cap(G_+\times\R)\Big)\cup \Big(g(F)\cap(G_-\times\R)\Big)\,.
  \]
  The fact that $P_\g(E)=P_\g(F)$ descends from the proof of the implication $(i)\Rightarrow (ii)$ that is presented in section \ref{section gauss i implica ii}.}
\end{example}

\subsection{Proof of Theorem \ref{thm gauss}, (i) implies (ii)}\label{section gauss i implica ii}  In this section we present the proof of the implication $(i)\Rightarrow(ii)$ in Theorem \ref{thm gauss}. Let us recall the following general relation for essential boundaries
  \begin{equation}
    \label{AcapB}
      \pae (A\cap B)\cap B^{(1)}=(\pae A)\cap B^{(1)}\,,
  \end{equation}
that holds true for every pair of Lebesgue measurable sets $A, B\subset\R^n$.

\begin{proof}[Proof of Theorem \ref{thm gauss}, (i) implies (ii)]
  {\it Overview}\,: We shall prove that if (ii) fails then (i) fails. Precisely, let us assume the existence of a non-trivial Borel partition $\{G_+,G_-\}$ of $G=\{0<v<1\}$, such that
  \begin{equation}
    \label{mentine}
      \H^{n-2}\Big(\Big(G^{(1)}\cap\pae G_+\cap\pae G_-\Big)\setminus\Big(\{v^\wedge=0\}\cup\{v^\vee=1\}\Big)\Big)=0\,.
  \end{equation}
  We set $G_1=\{v=1\}$, $G_0=\{v=0\}$, and then consider the Borel set
  \[
  E=\Big(F\cap\Big((G_+\cup G_1)\times\R\Big)\Big)\cup \Big(g(F)\cap(G_-\times\R)\Big)\,.
  \]
  The idea here is that since $E$ is obtained by reflecting $F$ across a region where the sections of $F$ are {\it either negligible or equivalent to $\R$}, then we should have $P_\g(E)=P_\g(F)$; however, since  $\H^{n-1}(G_+)\,\H^{n-1}(G_-)>0$ by assumption, this would imply that both $\H^n(E\Delta F)>0$ and $\H^n(E\Delta g(F))>0$, and thus that $(i)$ fails. In order to prove that $P_\g(E)=P_\g(F)$ we shall first need to prove that $E$ is a set of locally finite perimeter, and then use the information that its reduced boundary is $\H^{n-1}$-equivalent to its essential boundary in order to be able to check that no additional Gaussian perimeter is created in passing from $F$ to $E$.

  \medskip

  \noindent {\it Step one:} In this step we gather some preliminary remarks to the strategy of proof described above. We start by noticing that, if we set for the sake of brevity,
  \[
  G_{1\,0\,+}=G_1\cup G_0\cup G_+\,,\qquad
    G_{1\,0\,-}=G_1\cup G_0\cup G_-\,,
  \]
  then by \eqref{AcapB} and $F\cap (G_{1\,0\,+}\times\R)=E\cap(G_{1\,0\,+}\times\R)$ we find that
  \begin{equation}
    \label{nail1}
    \pae F\cap \Big(G_{1\,0\,+}^{(1)}\times\R\Big)=\pae E\cap \Big(G_{1\,0\,+}^{(1)}\times\R\Big)\,.
  \end{equation}
  Similarly, starting from $g(F)\cap (G_{1\,0\,-}\times\R)=E\cap(G_{1\,0\,-}\times\R)$, we deduce that
  \begin{equation}
    \label{nail2}
    \pae (g(F))\cap \Big(G_{1\,0\,-}^{(1)}\times\R\Big)=\pae E\cap \Big(G_{1\,0\,-}^{(1)}\times\R\Big)\,.
  \end{equation}
  By \eqref{nail1} and \eqref{nail2}, we thus find
  \begin{eqnarray}\label{n1}
    \H^{n-1}_\g\Big(\pae E\cap \Big(G_{1\,0\,+}^{(1)}\times\R\Big)\Big)&=&\H^{n-1}_\g\Big(\pae F\cap \Big(G_{1\,0\,+}^{(1)}\times\R\Big)\Big)\,;
    \\\nonumber
    \H^{n-1}_\g\Big(\pae E\cap \Big(G_{1\,0\,-}^{(1)}\times\R\Big)\Big)&=&\H^{n-1}_\g\Big(\pae g(F)\cap \Big(G_{1\,0\,-}^{(1)}\times\R\Big)\Big)
    \\\label{n2}
    &=&\H^{n-1}_\g\Big(\pae  F\cap \Big(G_{1\,0\,-}^{(1)}\times\R\Big)\Big)\,.
  \end{eqnarray}
  By \eqref{n1} and \eqref{n2}, we are left to understand the situation outside the cylinder of basis $\R^{n-1}\setminus(G_{1\,0\,+}^{(1)}\cup G_{1\,0\,-}^{(1)})$. To this end, let us notice that,
  \[
  G_{1\,0\,+}^{(0)}=G_-^{(1)}\,,\qquad G_{1\,0\,-}^{(0)}=G_+^{(1)}\,,\qquad \pae G_{1\,0\,+}=\pae G_-\,,
  \qquad \pae G_{1\,0\,-}=\pae G_+\,,
  \]
  so that
  \begin{eqnarray}\nonumber
  \R^{n-1}\setminus(G_{1\,0\,+}^{(1)}\cup G_{1\,0\,-}^{(1)})&=&\Big(G_{1\,0\,+}^{(0)}\cup\pae G_{1\,0\,+}\Big)
  \cap \Big(G_{1\,0\,-}^{(0)}\cup\pae G_{1\,0\,-}\Big)
  \\\label{nail4}
  &=&\pae G_+\cap\pae G_-\,.
  \end{eqnarray}
  Let us also notice that, by \eqref{mentine} and \cite[2.10.45]{FedererBOOK},
  \begin{equation}
    \label{nail3}
      \H^{n-1}\Big(\Big[\Big(G^{(1)}\cap\pae G_+\cap\pae G_-\Big)\setminus\Big(\{v^\wedge=0\}\cup\{v^\vee=1\}\Big)\Big]\times\R\Big)=0\,.
  \end{equation}
  (Notice that we cannot apply \eqref{federer 3.2.23} here, since $\pae G_+\cap\pae G_-$ may fail to be countably $\H^{n-2}$-rectifiable; see Example \ref{remark koch}.) By taking into account that $\pae G_\s=(\pae G_\s\cap\pae G)\cup(\pae G_\s\cap G^{(1)})$ for $\s\in\{+,-\}$, we are thus left to understand the situation inside the cylinder $(W_1\cup W_2)\times\R$, where we have set,
  \begin{eqnarray*}
    W_1&=&G^{(1)}\cap\pae G_+\cap\pae G_-\cap\Big(\{v^\wedge=0\}\cup\{v^\vee=1\}\Big)\,,
    \\\nonumber
    W_2&=&\pae G\cap\pae G_+\cap\pae G_-\,.
  \end{eqnarray*}
  In fact, by taking into account that
  \begin{eqnarray*}
  \pae G&\subset&\{z\in\R^{n-1}:\theta^*(\{v=0\},z)>0\}\cup\{z\in\R^{n-1}:\theta^*(\{v=1\},z)>0\}
  \\
  &\subset&\{v^\wedge=0\}\cup\{v^\vee=1\}\,,
  \end{eqnarray*}
  we find
  \begin{eqnarray*}
    W_2=\pae G\cap\pae G_+\cap\pae G_-\cap\Big(\{v^\wedge=0\}\cup\{v^\vee=1\}\Big)\,,
  \end{eqnarray*}
  so that
  \begin{equation}
    \label{W}
     W_1\cup W_2=\pae G_+\cap\pae G_-\cap\Big(\{v^\wedge=0\}\cup\{v^\vee=1\}\Big)\,.
  \end{equation}

  \medskip

  \noindent {\it Step two}\,: We show that $E$ and $F$ have no essential boundary above $\{v^\vee=0\}\cup\{v^\wedge=1\}$. Indeed, we are going to prove
  \begin{eqnarray}\label{antani1}
  &&\{v^\vee=0\}\times\R\subset E^{(0)}\cap F^{(0)}\,,
  \\\label{antani2}
  &&\{v^\wedge=1\}\times\R\subset E^{(1)}\cap F^{(1)}\,,
  \end{eqnarray}
  thus deducing that
  \begin{eqnarray}
    \label{n4}
    &&\H^{n-1}_\g\Big(\pae F\cap\Big(\{v^\vee=0\}\times\R\Big)\Big)=     \H^{n-1}_\g\Big(\pae E\cap\Big(\{v^\vee=0\}\times\R\Big)\Big)=0\,,
    \\
    \label{n5}
    &&\H^{n-1}_\g\Big(\pae F\cap\Big(\{v^\wedge=1\}\times\R\Big)\Big)=\H^{n-1}_\g\Big(\pae E\cap\Big(\{v^\wedge=1\}\times\R\Big)\Big)=0\,.
  \end{eqnarray}
  Let us show for example that if $z\in\{v^\vee=0\}$, then $(z,s)\in E^{(0)}$ for every $s\in\R$. Indeed, if $s\in\R$ and $r<1$, then
  \begin{eqnarray*}
    \H^n(E\cap\C_{(z,s),r})&=&2r\,\H^{n-1}(\D_{z,r}\cap G_1)
    +\int_{s-r}^{s+r}\,\H^{n-1}\Big(\D_{z,r}\cap G_+\cap \{f<t\}\Big)\,dt
    \\
    &&+\int_{s-r}^{s+r}\,\H^{n-1}\Big(\D_{z,r}\cap G_-\cap \{f<-t\}\Big)\,dt
    \\
    &\le& 2r\,\H^{n-1}(\D_{z,r}\cap G_1)+2r\,\H^{n-1}\Big(\D_{z,r}\cap \{f<|s|+1\}\Big)=o(r^n)\,,
  \end{eqnarray*}
  where in the last identity we have used the assumption that $v^\vee(z)=0$ (and thus $f^\wedge(z)=+\infty$) to deduce that $\theta(\{f<|s|+1\},z)=0$. This proves \eqref{antani1}, and \eqref{antani2} follows analogously.

  \medskip

  \noindent {\it Step three}\,: We show that $E$ is of locally finite perimeter. To this end, by taking into account step one and step two, it suffices to prove that
  \begin{equation}
    \label{bon}
  \H^{n-1}_\g(\pae E\cap(\S_1\times\R))<\infty\,,
  \end{equation}
  where we have set
  \[
  \S_1=\pae G_+\cap\pae G_-\cap\Big(\{0=v^\wedge<v^\vee\}\cup\{v^\wedge<v^\vee=1\}\Big)\,.
  \]
  We now claim that, if $z\in\{0=v^\wedge<v^\vee\}\cup\{v^\wedge<v^\vee=1\}$, then
  \begin{equation}
    \label{pisainB}
    (\pae E)_z\subset_{\H^1} (\pae F)_z\cup\,(\pae g(F))_z\,.
  \end{equation}
  Indeed, on the one hand, by \eqref{inclusioni tizio}, we have that
  \begin{align}\label{pisa3}
    &(\pae F)_z=_{\H^1}[f^\wedge(z),\infty)\,,&  \forall z\in \{0=v^\wedge<v^\vee\}\,,
    \\\label{pisa4}
    &(\pae F)_z=_{\H^1}(-\infty,f^\vee(z)]\,,& \forall z\in\{v^\wedge<v^\vee=1\}\,;
  \end{align}
  on the other hand, we also have, {\it for every $z\in\R^{n-1}$},
  \begin{eqnarray}\label{pisa1}
    (\pae E)_z&\subset&(-\infty,-f^\wedge(z)]\cup [f^\wedge(z),\infty)\,,
    \\\label{pisa2}
    (\pae E)_z&\subset&(-\infty,f^\vee(z)]\cup [-f^\vee(z),\infty)\,;
  \end{eqnarray}
  see Figure \ref{fig federico}.
  \begin{figure}
    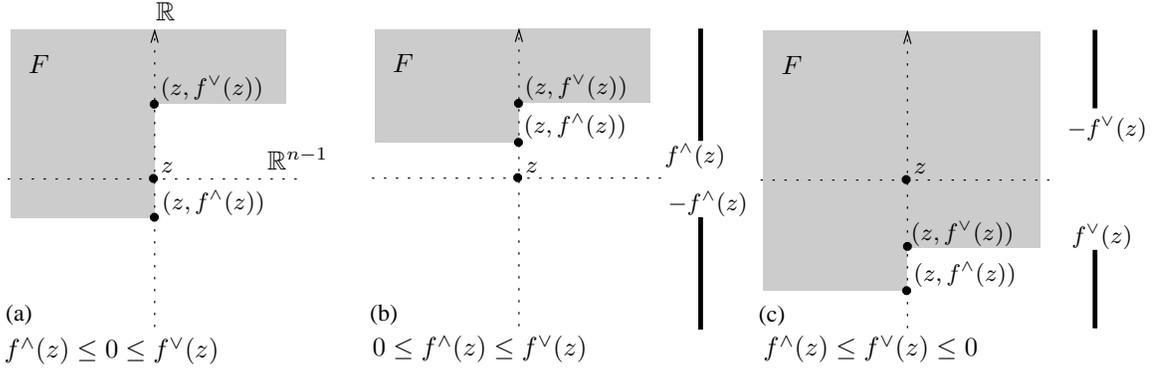\caption{{\small The three cases one has to consider in describing $(\pae E)_z$. Notice that, in case (a), both inclusions \eqref{pisa1} and \eqref{pisa2} are trivial; in case (b), \eqref{pisa2} is trivial, and \eqref{pisa1} carries all the useful information; finally, in case (c), \eqref{pisa1} is trivial, and \eqref{pisa2} is not.}}\label{fig federico}
  \end{figure}
  Let us show, for example, the validity of \eqref{pisa1}: if $f^\wedge(z)\le0$, then inclusion is trivial; if we thus assume that $f^\wedge(z)>0$, then we have $v^\vee(z)<1/2$, thus that
  \[
  0=\theta(\{v>2/3\},z)\ge\theta(G_1,z)\,;
  \]
  that is, $z\in G_1^{(0)}$. Hence,
  \begin{eqnarray*}
    \H^n(E\cap\C_{(z,t),r})&=&2r\H^{n-1}(G_1\cap\D_{z,r})+\int_{t-r}^{t+r}\H^{n-1}(G_+\cap\{f<s\}\cap\D_{z,r})\,ds
    \\
    &&+\int_{t-r}^{t+r}\H^{n-1}(G_-\cap\{f<-s\}\cap\D_{z,r})\,ds
    \\
    &\le&o(r^n)+2r\,\H^{n-1}(\{f<|t|+r\}\cap\D_{z,r})\,;
  \end{eqnarray*}
  therefore, if $t\in(-f^\wedge(z),f^\wedge(z))$ and $r<r_*$ for a suitable value of $r_*$, then we have
  \[
  \H^n(E\cap\C_{(z,t),r})\le o(r^n)+2r\,\H^{n-1}(\{f<|t|+r_*\}\cap\D_{z,r})=o(r^n)\,,
  \]
  that is, $(z,t)\in E^{(0)}$; in other words,
  \[
  (-f^\wedge(z),f^\wedge(z))\subset (E^{(0)})_z\subset\R\setminus(\pae E)_z\,,
  \]
  that is \eqref{pisa1}. The proof of \eqref{pisa2} is analogous; by taking into account \eqref{pisa3}, \eqref{pisa4}, \eqref{pisa1}, and \eqref{pisa2}, we thus find \eqref{pisainB}, which in particular gives
  \begin{equation}
    \label{pisa}
      \H^{n-1}_\g\Big(\pae E\cap\Big(\S_1\times\R\Big)\Big)\le 2\, \H^{n-1}_\g\Big(\pae F\cap\Big(\S_1\times\R\Big)\Big)\,,
  \end{equation}
  and thus proves \eqref{bon}. By \eqref{n1}, \eqref{n2}, \eqref{nail4}, \eqref{nail3}, \eqref{W}, \eqref{n4}, \eqref{n5} and \eqref{bon}, we thus find $\H^{n-1}_\g(\pae E)<\infty$. Hence, by Federer's criterion, $E$ is of locally finite perimeter.

  \medskip

  \noindent {\it Step four:} We have proved so far that $E$ is a set of locally finite perimeter with
  \[
  P_\g(E;(\R^{n-1}\setminus\S_1)\times\R)=P_\g(F;(\R^{n-1}\setminus\S_1)\times\R)
  \]
  Since $P_\g(E;W\times\R)\ge P_\g(F;W\times\R)$ for every Borel set $W\subset\R^{n-1}$, we only need to show
  \begin{equation}
    \label{superfilippo2}
    P_\g(E;\S_1\times\R)\le P_\g(F;\S_1\times\R)\,.
  \end{equation}
  By Federer's theorem, $\H^{n-1}(\pae E\setminus\pa^JE)=0$, and, moreover, by Proposition \ref{proposition fine bv} we have that $\H^{n-2}(S_f\setminus J_f)=0$ (so that $\H^{n-1}((S_f\setminus J_f)\times\R)=0$). Since $\{v^\wedge=0\}=\{f^\vee=\infty\}$ and $\{v^\vee=1\}=\{f^\wedge=-\infty\}$, we conclude that \eqref{superfilippo2} follows by
  \begin{equation}
    \label{superfilippo3}
    \H^{n-1}_\g(\pa^JE\cap(\S_2\times\R))\le\H^{n-1}_\g(\pae F\cap(\S_2\times\R))\,,
  \end{equation}
  where
  \[
  \S_2=\pae G_+\cap\pae G_-\cap J_f\cap\Big(\Big\{-\infty<f^\wedge<f^\vee=\infty\Big\}\cup\Big\{-\infty=f^\wedge<f^\vee<\infty\Big\}\Big)\,.
  \]
  We now turn to the proof of \eqref{superfilippo3}, and thus complete the proof of the implication $(ii)\Rightarrow(i)$. To this end, we pick
  \[
  z\in J_f\cap\Big(\Big\{-\infty<f^\wedge<f^\vee=\infty\Big\}\cup\Big\{-\infty=f^\wedge<f^\vee<\infty\Big\}\Big)
  \]
  and show that either $(\pa^JE)_z \subset_{\H^1} (\pa^J F)_z$ or $(\pa^JE)_z \subset_{\H^1} g ( (\pa^J F)_z)$. In fact, by symmetry, we only have to consider the case
  \begin{equation}
    \label{superfilippo4}
      z\in J_f \cap \{ - \infty < f^\wedge <f^\vee=\infty\}\,.
  \end{equation}
  Under assumption \eqref{superfilippo4},
  \begin{figure}
    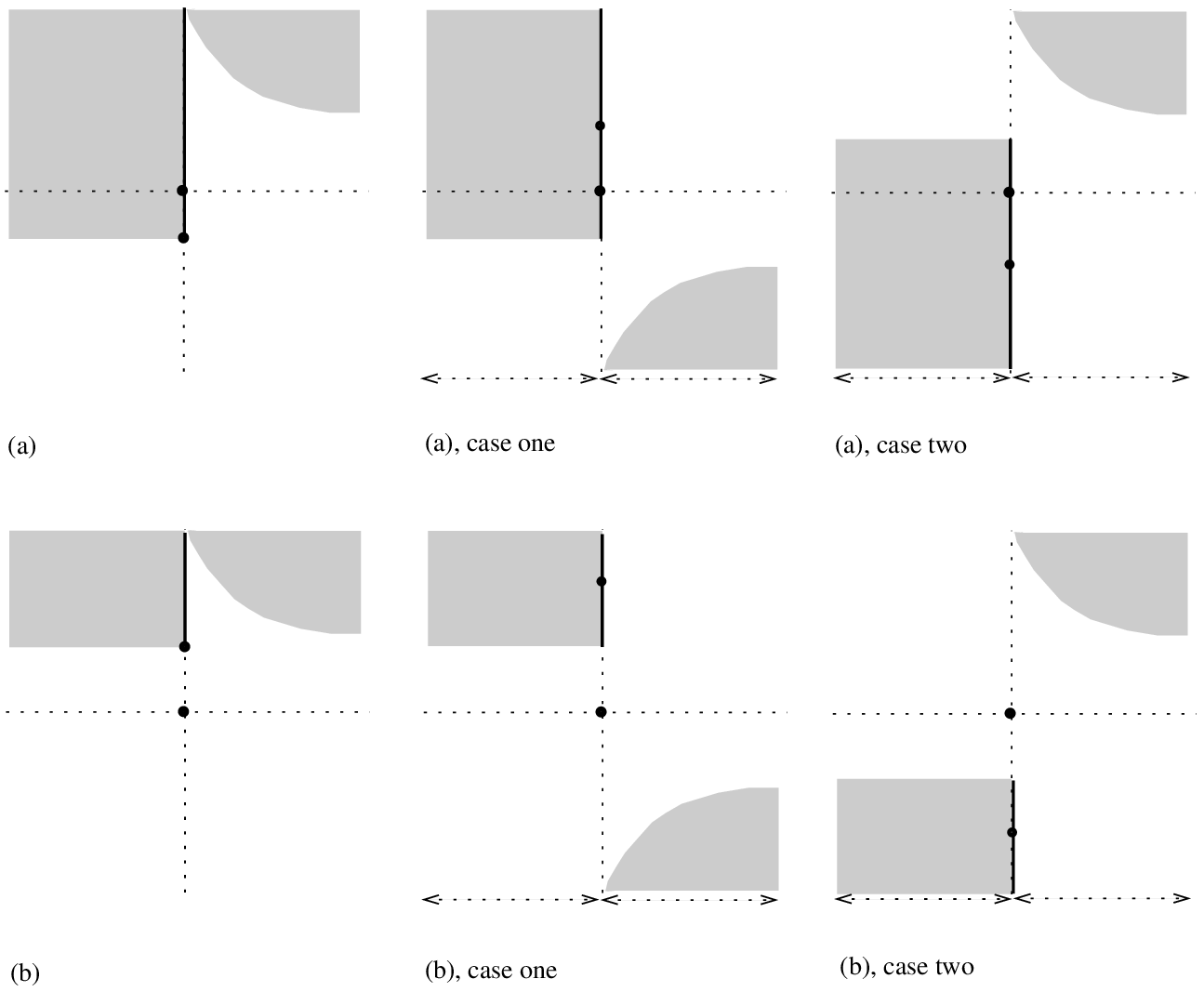\caption{{\small The situation in the proof of \eqref{mlk1} and \eqref{mlk2}. If $f^\wedge(z)\le 0$, then \eqref{centro1} shows that $(f^\wedge(z),-f^\wedge(z))$ is contained both in $(\pae F)_z$ and $(\pae E)_z$. Moreover, if $f^\wedge(z)\le 0$ and we are in case one, then, see \eqref{subcase1}, there exists $t_0>-f^\wedge(z)$ such that $(z,t_0)\in\pa^JE$, $(\pae E)_z$ and $(\pae F)_z$ are both $\H^1$-equivalent to $(f^\wedge(z),\infty)$, and \eqref{mlk1} holds true. Finally, if $f^\wedge(z)\le0$ and we are in case two, then, see \eqref{subcase2}, there exists $t_0<f^\wedge(z)$ such that $(z,t_0)\in\pa^JE$, $(\pae E)_z$ and $g((\pae F)_z)$ are both $\H^1$ equivalent to $(-\infty,f^\wedge(z))$, and thus \eqref{mlk2} holds true. Similar remarks apply when $f^\wedge(z)>0$.}}\label{fig federico2}
  \end{figure}
  we thus want to show that
    \begin{eqnarray}\label{mlk1}
    &&\mbox{either}\qquad (\pa^JE)_z \subset_{\H^1} (\pa^J F)_z =_{\H^1}\,(f^\wedge(z),\infty)\,,
    \\\label{mlk2}
    &&\mbox{or}\hspace{0.6cm}\qquad (\pa^JE)_z \subset_{\H^1} g \Big( (\pa^J F)_z \Big) =_{\H^1}\,(-\infty, - f^\wedge(z))\,.
  \end{eqnarray}
  We first notice that, by Lemma \ref{lemma fico}, there exists $\nu\in S^{n-1} \cap \R^{n-1}$ such that
  \begin{eqnarray}
  \label{f uno}
  \{f<s\}_{z,r}\toloc H_{z,\nu}^+\,,\qquad \forall\,s>f^\wedge(z)\,,
  \\
  \label{f due}
  \{f < s\}_{z,r}\toloc \emptyset\,,\qquad \forall\,s<f^\wedge(z)\,.
  \end{eqnarray}
  Moreover, we have the inclusions,
  \begin{eqnarray} \label{centro1}
  (\partial^J E)_z\cap\Big(f^\wedge(z),-f^\wedge(z)\Big)&\subset_{\H^1}& (\partial^J F)_z\,,\qquad\mbox{if $f^\wedge(z)\le 0$}\,,
  \\\label{centro2}
  (\partial^J E)_z\cap\Big(-f^\wedge(z),f^\wedge(z)\Big)&\subset_{\H^1}& \emptyset\,,\hspace{1cm}\qquad\mbox{if $f^\wedge(z)>0$}\,,
  \end{eqnarray}
  that follow by \eqref{pisa3} and \eqref{pisa1}. We now divide our argument into two cases.

  \medskip

  \noindent {\it Case one}\,: assuming that there exists $t_0\in(\pa^J E)_z$ with $t_0>|f^\wedge(z)|$ we show that
  \begin{equation}
    \label{subcase1}
      \Big\{t\in(\pa^JE)_z:|t|>|f^\wedge(z)|\Big\}=\Big(|f^\wedge(z)|,\infty\Big)\,.
  \end{equation}

  \noindent {\it Case two}\,: assuming that there exists $t_0\in(\pa^J E)_z$ with $t_0<-|f^\wedge(z)|$ we show that
  \begin{equation}
    \label{subcase2}
  \Big\{t\in(\pa^JE)_z:|t|>|f^\wedge(z)|\Big\}=\Big(-\infty,-|f^\wedge(z)|\Big)\,.
  \end{equation}

  \medskip

  \noindent Before entering into the proof of the two cases, let us notice how they allow to complete the proof of the theorem (see also Figure \ref{fig federico2}.) Indeed, if none of the two cases holds true, this means that $(\pa^JE)_z\subset_{\H^1}(-|f^\wedge(z)|,|f^\wedge(z)|)$,
  and then the validity of either \eqref{mlk1} or \eqref{mlk2} follows by \eqref{centro1} and \eqref{centro2}. (Just notice that when $f^\wedge(z)\le 0$, then $(-|f^\wedge(z)|,|f^\wedge(z)|)\subset(\pa^JF)_z\cap g(\pa^JF)_z$.) Similarly, if we are in the first case, and $f^\wedge(z)>0$, then \eqref{mlk1} follows by combining \eqref{centro2} with \eqref{subcase1}; if we are in the first case and $f^\wedge(z)\le 0$, then \eqref{mlk1} follows by \eqref{centro1} and \eqref{subcase1}; finally, if we are in the second case then \eqref{mlk2} holds true by combining \eqref{centro1}, \eqref{centro2}, and \eqref{subcase2}. We prove \eqref{subcase1} and \eqref{subcase2} in the next step.

  \medskip

  \noindent {\it Step five}\,: We assume to be in the first case, and prove \eqref{subcase1}. Let us first prove that
  \begin{equation}
  \label{f tre}
  \H^{n-1}(\D_{z,r}\cap G_{1\,+}\cap H^+_{z,\nu})=\frac{\om_{n-1}\,r^{n-1}}2+o(r^{n-1})\,,
\end{equation}
and thus, clearly, that
\begin{equation}
  \label{f quattro}
\H^{n-1}(\D_{z,r}\cap G_{0\,-}\cap H^+_{z,\nu})=o(r^{n-1})\,,
\end{equation}
where we have set $G_{1\,+}=G_{+}\cup G_1$ and $G_{0\,-}=G_{-}\cup G_0$. To prove \eqref{f tre}, we pick $t_1$ and $t_2$ such that $|f^\wedge(z)|<t_1<t_0<t_2$. Since $(z,t_0)\in\pa^JE$, and since every half-space $H$ with $x\in\pa H$ cuts $\C_{x,r}$ into two halves of equal volume, we find that
\begin{eqnarray*}
  &&\frac{\H^n(\C_{(z,t_0),r})}2+o(r^n)=\H^n(E\cap\C_{(z,t_0),r})
  \\
  &&\hspace{1.2cm}=\int_{t_0-r}^{t_0+r}\H^{n-1}(\D_{z,r}\cap G_{1\,+}\cap\{f<s\})
  +\H^{n-1}(\D_{z,r}\cap G_{-}\cap\{f<-s\})\,ds
   \\
  &&\hspace{1.2cm}\le2r\,\H^{n-1}(\D_{z,r}\cap G_{1\,+}\cap \{f<t_2\})\,ds+o(r^n)\,,
\end{eqnarray*}
where in the last identity we have used \eqref{f due} with $s=-t_1<-|f^\wedge(z)|\le f^\wedge(z)$. By applying \eqref{f uno} with $s=t_2>|f^\wedge(z)|\ge f^\wedge(z)$, and since $\H^{n-1}(\C_r)=2\,\om_{n-1}r^n$, we find
\begin{eqnarray*}
  \om_{n-1}r^n+o(r^n)\le 2r\,\H^{n-1}(\D_{z,r}\cap G_{1\,+}\cap H_{z,\nu}^+)+o(r^n)\,.
\end{eqnarray*}
that is
\begin{eqnarray*}
  \frac{\om_{n-1}r^{n-1}}2+o(r^{n-1})\le\H^{n-1}(\D_{z,r}\cap G_{1\,+}\cap H_{z,\nu}^+)\le\frac{\om_{n-1}r^{n-1}}2\,.
\end{eqnarray*}
This proves \eqref{f tre}, and thus \eqref{f quattro}. We now pick $t>|f^\wedge(z)|$, we now choose $t_1$ and $t_2$ to be such that $|f^\wedge(z)|<t_1<t<t_2$, and then notice that
\begin{eqnarray*}
  \H^n((E\Delta H_{(z,t),(\nu,0)}^+)\cap\C_{(z,t),r})
  &=&\int_{t-r}^{t+r}\H^{n-1}(\D_{z,r}\cap G_{1\,+}\cap\{f<s\}\cap H_{z,\nu}^-)\,ds
  \\
  &&+\int_{t-r}^{t+r}\H^{n-1}(\D_{z,r}\cap G_{1\,+}\cap\{f\ge s\}\cap H_{z,\nu}^+)\,ds
  \\
  &&+\int_{t-r}^{t+r}\H^{n-1}(\D_{z,r}\cap G_{-}\cap\{f<-s\}\cap H_{z,\nu}^-)\,ds
   \\
  &&+\int_{t-r}^{t+r}\H^{n-1}(\D_{z,r}\cap G_{-}\cap\{f\ge-s\}\cap H_{z,\nu}^+)\,ds\,,
\end{eqnarray*}
so that, $\H^n((E\Delta H_{(z,t),(\nu,0)}^+)\cap\C_{(z,t),r})\le 2r\,(I_1+I_2+I_3+I_4)$ where
\begin{eqnarray*}
I_1&=&\H^{n-1}(\D_{z,r}\cap G_{1\,+}\cap\{f<t_2\}\cap H_{z,\nu}^-)\,,
\\
I_2&=&\H^{n-1}(\D_{z,r}\cap G_{1\,+}\cap\{f \ge t_1\}\cap H_{z,\nu}^+)\,,
\\
I_3&=&\H^{n-1}(\D_{z,r}\cap G_{-}\cap\{f<-t_1\}\cap H_{z,\nu}^-)\,,
\\
I_4&=&\H^{n-1}(\D_{z,r}\cap G_{-}\cap\{f\ge-t_2\}\cap H_{z,\nu}^+)\,.
\end{eqnarray*}
We see that $I_1= I_2 = o(r^{n-1})$ by \eqref{f uno}, while $I_3=o(r^{n-1})$ by \eqref{f due}, and $I_4=o(r^{n-1})$ by \eqref{f quattro}.
We have thus proved that
\[
\Big(|f^\wedge(z)|,\infty\Big) \subset (\pa^JE)_z\,.
\]
In order to conclude the proof of \eqref{subcase1} we will now prove that
\[
\Big(-\infty,-|f^\wedge(z)|\Big)\subset E^{(0)}\,.
\]
Indeed, let us pick $t<-|f^\wedge(z)|$. This time we set $t_1$ and $t_2$ to be such that $t_1<t<t_2<-|f^\wedge(z)|$. In this way, by arguing as above, and by also recalling that $z \in G_1^{(0)}$, we find
\begin{eqnarray*}
  \H^n(E \cap\C_{(z,t),r})
  &\leq&2 r \, \H^{n-1}(\D_{z,r}\cap G_{+}\cap\{f<t_2\})
  \\
  &&+2r \, \H^{n-1}(\D_{z,r}\cap G_{-}\cap\{f<-t_1\})\,.
\end{eqnarray*}
where the first term is $o(r^n)$ by \eqref{f due}. By \eqref{f uno} we thus find
\begin{eqnarray*}
  \H^n(E \cap\C_{(z,t),r})=o(r^n)+2r \, \H^{n-1}(\D_{z,r}\cap G_{-}\cap H_{z,\nu}^+)=o(r^n)\,,
\end{eqnarray*}
where the last identity follows by \eqref{f quattro}. Hence $(z,t) \in E^{(0)}$, as claimed, and the proof of \eqref{subcase1} is completed. In order to prove \eqref{subcase2}, we notice that the existence of $t_0<-|f^\wedge(z)|$ such that $(z,t_0)\in\pa^JE$, implies
\begin{equation}
  \label{f quattroa}
\H^{n-1}(\D_{z,r}\cap G_{1\,+}\cap H^+_{z,\nu})=o(r^{n-1})\,.
\end{equation}
The proof of \eqref{subcase2} is then analogous to that of \eqref{subcase1}, with \eqref{f quattroa} in place of \eqref{f quattro}.
\end{proof}

\subsection{Proof of Theorem \ref{thm gauss} and Theorem \ref{thm gauss R2}}\label{section gauss conclusion} We finally complete the proof of our two main results.

\begin{proof}
  [Proof of Theorem \ref{thm gauss}] The equivalence of (i) and (ii) is proved in sections \ref{section gauss ii implica i} and \ref{section gauss i implica ii}.
\end{proof}

\begin{proof}
  [Proof of Theorem \ref{thm gauss R2}] {\it Step one}\,: We show that if a Borel set $G\subset\R$ is essentially connected, then $G^{(1)}$ is an interval. Indeed, let us prove that, if $a,b\in G^{(1)}$ with $a<b$ and $c\in(a,b)$, then $c\in G^{(1)}$. To see this, we set $G_+=G\cap(c,\infty)$, $G_-=G\cap(-\infty,c)$, so that $\{G_+,G_-\}$ is a Borel partition of $G$ modulo $\H^1$. In fact, $\H^1(G_+)\H^1(G_-)>0$. Indeed, should $\H^1(G_+)=0$, then we would have $(G_+)^{(1)}=\emptyset$, and thus
  \[
  b\in G^{(1)}\cap(c,\infty)^{(1)}\subset\Big(G\cap(c,\infty)\Big)^{(1)}=(G_+)^{(1)}=\emptyset\,,
  \]
  a contradiction. Since $G$ is essentially connected, we find
  \begin{equation}
    \label{R2}
      \H^0(G^{(1)}\cap\pae G_+\cap\pae G_-)>0\,.
  \end{equation}
  Since $G^{(1)}\cap\pae G_+=G^{(1)}\cap\{c\}$ and $G^{(1)}\cap\pae G_-=G^{(1)}\cap\{c\}$, \eqref{R2} gives $c\in G^{(1)}$.

  \medskip

  \noindent {\it Step two}\,: If $\{v^\wedge=0\}\cup\{v^\vee=1\}$ does not essentially disconnect $\{0<v<1\}$, then, in particular, $\{0<v<1\}$ is essentially connected, and thus $\H^1$-equivalent to an open interval $I$ by step one. Let now $c\in I$, and assume that $v^\wedge(c)=0$. Since $\{c\}$ (thus $\{v^\wedge=0\}$) essentially disconnects $I$, by Remark \ref{remark essential connected} we find that $\{v^\wedge=0\}$ essentially disconnects $\{0<v<1\}$, a contradiction. Therefore, $v^\wedge>0$ on $I$. We similarly see that $v^\vee<1$ on $I$. This shows that assumption (ii) in Theorem \ref{thm gauss} implies assumption (ii) in Theorem \ref{thm gauss R2}. Since the reverse implication is trivial, we are done.
\end{proof}

\section{Some further conditions for rigidity}\label{section hey} As noticed in Remark \ref{remark sufficient}, a natural question is whether it is possible to formulate sufficient conditions for rigidity in terms of suitable connectedness properties of $F[v]$. Referring readers to the remark for a list of examples and possible conditions, we prove here two results, that provide simple sufficient conditions for rigidity.

\begin{theorem}\label{thm pino}
  If $v:\R^{n-1}\to[0,1]$ is Lebesgue measurable and such that $P_\g(F[v])<\infty$, and if there exists a sequence $t_h\to 0$ as $h\to\infty$ such that, for every $h\in\N$,
  \begin{equation}\label{kkk}
  \mbox{$F[v]\cap\Big(\{t_h<v<1-t_h\}\times\R\Big)$ is essentially connected in $\R^n$}\,,
  \end{equation}
  then $E\in\M(v)$ if and only if $\H^n(E\Delta F[v])=0$ or $\H^n(E\Delta g(F[v]))=0$.
\end{theorem}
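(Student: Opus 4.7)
The plan is to argue by contrapositive against condition (ii) of Theorem \ref{thm gauss}, i.e.\ to show that if $\{v^\wedge=0\}\cup\{v^\vee=1\}$ essentially disconnects $G:=\{0<v<1\}$, then \eqref{kkk} must fail for some $h$. So, assume the existence of a non-trivial Borel partition $\{G_+,G_-\}$ of $G$ modulo $\H^{n-1}$, with $\H^{n-1}(G_+)\,\H^{n-1}(G_-)>0$, for which
$$\H^{n-2}(S)=0\,,\qquad S:=\bigl(G^{(1)}\cap\pae G_+\cap\pae G_-\bigr)\setminus\bigl(\{v^\wedge=0\}\cup\{v^\vee=1\}\bigr)\,.$$
Setting $A_h:=\{t_h<v<1-t_h\}$ and $F_h:=F[v]\cap(A_h\times\R)$, I aim to show that, for every $h$ large enough, the Borel partition $F_h=F_h^+\cup F_h^-$ modulo $\H^n$, with $F_h^{\pm}:=F_h\cap(G_\pm\times\R)$, witnesses that $\emptyset$ essentially disconnects $F_h$, contradicting \eqref{kkk}.

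Non-triviality of this partition for $h$ large is immediate: for $\H^{n-1}$-a.e.\ $z\in G_\pm$ one has $v(z)\in(0,1)$, hence $z\in A_h$ for every $h$ sufficiently large, and so $\H^{n-1}(G_\pm\cap A_h)\to\H^{n-1}(G_\pm)>0$; moreover $\H^1((F[v])_z)=\infty$ for every such $z$, so that $\H^n(F_h^\pm)=\infty$ by Fubini.

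The core step is to verify $\H^{n-1}(F_h^{(1)}\cap\pae F_h^+\cap\pae F_h^-)=0$. Fix $x$ in this intersection. Since $F_h\subset A_h\times\R$, one has $F_h^{(1)}\subset(A_h\times\R)^{(1)}=A_h^{(1)}\times\R$, so $\p x\in A_h^{(1)}\subset\{t_h\le v^\wedge\le v^\vee\le 1-t_h\}$; in particular $\p x\in G^{(1)}\setminus(\{v^\wedge=0\}\cup\{v^\vee=1\})$. If $\p x$ belonged to $G_+^{(1)}$, then $\p x\in G_+^{(1)}\cap A_h^{(1)}\subset(G_+\cap A_h)^{(1)}$; using the elementary identity $(Y\times\R)^{(1)}=Y^{(1)}\times\R$ for $Y\subset\R^{n-1}$, this would place $x$ in the essential interior of $(G_+\cap A_h)\times\R$, hence in the essential exterior of the disjoint cylinder $(G_-\cap A_h)\times\R\supset F_h^-$, contradicting $x\in\pae F_h^-$. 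Thus $\p x\notin G_+^{(1)}$, and symmetrically $\p x\notin G_-^{(1)}$; since $\{G_+,G_-\}$ partitions $G$ modulo $\H^{n-1}$ and $\p x\in G^{(1)}$, a short density argument (relying on $\theta(G_+,\p x)+\theta(G_-,\p x)=\theta(G,\p x)=1$ whenever these quantities exist) forces $\p x\in\pae G_+\cap\pae G_-$. Hence $\p x\in S$ and
$$F_h^{(1)}\cap\pae F_h^+\cap\pae F_h^-\subset S\times\R\,.$$
The elementary product-covering estimate $\H^{n-1}(S\times[-R,R])\le C_n\,R\,\H^{n-2}(S)$, applied for each $R>0$ to the Borel set $S$ with $\H^{n-2}(S)=0$, then gives $\H^{n-1}(S\times\R)=0$.

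The main obstacle is precisely this last passage: from $\H^{n-2}$-nullity of a projection into $\R^{n-1}$ to $\H^{n-1}$-nullity of its full cylindrical lift into $\R^n$. One cannot appeal to \eqref{federer 3.2.23} here, since Example \ref{remark koch} shows that $\pae G_+\cap\pae G_-$ may fail to be countably $\H^{n-2}$-rectifiable; the direct product-covering inequality circumvents this obstruction, since no rectifiability is required for it. With the core bound in place, Theorem \ref{thm gauss}, implication $(ii)\Rightarrow(i)$, delivers the desired rigidity.
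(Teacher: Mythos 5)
Your proof is correct, and it takes a slightly different, arguably cleaner logical route than the paper's. You prove the sharper implication \eqref{kkk}~$\Rightarrow$~(ii) of Theorem~\ref{thm gauss} and then invoke that theorem as a black box, whereas the paper instead re-opens the proof of the implication (ii)~$\Rightarrow$~(i), observes that (ii) was used there only to produce $\H^{n-2}(\Sigma_k)>0$, and then shows directly that \eqref{kkk} yields $\H^{n-2}(\Sigma_k)>0$ (via the projection $\Lambda_h=\p(\pae F_{h,+}\cap\pae F_{h,-}\cap F_h^{(1)})$ and the inclusion $\Lambda_h\subset\Sigma_k$). In substance the two arguments hinge on the same key inclusion between essential boundaries at the $F_h$-level and at the $G$-level: your $F_h^{(1)}\cap\pae F_h^+\cap\pae F_h^-\subset S\times\R$ is the same geometric fact that underlies the paper's $\Lambda_h\subset\pae G_+\cap\pae G_-\cap G_h^{(1)}$, proved by the same density argument. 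Likewise, what you call the ``elementary product-covering estimate'' is exactly \cite[2.10.45]{FedererBOOK}, which is precisely how the paper passes between Hausdorff measures of a set and of its cylindrical lift when the set in hand may fail to be countably $\H^{n-2}$-rectifiable (the paper makes this identical move at \eqref{nail3}); so the obstruction you flag is real, and your resolution is the one the paper also uses. What your route buys is that it records \eqref{kkk}~$\Rightarrow$~(ii) as a statement in its own right and keeps Theorem~\ref{thm gauss} self-contained; what the paper's route buys is a slightly shorter argument that does not require the full strength of (ii), only the weaker fact that $\H^{n-2}(\Sigma_k)>0$ once a candidate non-rigid $E$ is fixed. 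Two small points to tighten: passing from $t_h\to0$ to $\H^{n-1}(G_\pm\cap A_h)\to\H^{n-1}(G_\pm)$ is cleanest if you first extract a monotone subsequence of $t_h$ (or invoke Fatou via $\liminf_h A_h\supset_{\H^{n-1}} G$); and in the density step establishing $\p x\in\pae G_+\cap\pae G_-$ it is worth spelling out that $\p x\in G_+^{(0)}$ together with $\p x\in G^{(1)}$ would force $\p x\in G_-^{(1)}$, which you already excluded.
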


\begin{proof}
  We notice that in the proof of (ii) implies (i) in Theorem \ref{thm gauss}, assumption (ii) was used only to guarantee the validity of \eqref{h2}, that in turn was used in step five of that proof to deduce that $\H^{n-2}(\S_k)>0$. Thus, in order to prove that \eqref{kkk} implies rigidity, it will suffice to show that it implies $\H^{n-2}(\S_k)>0$ for $k$ large enough. Let us now set
  \[
  G_h=\{t_h<v<1-t_h\}\,,\qquad F_h=F\cap (G_h\times\R)\,,\qquad h\in\N\,.
  \]
  If we set $G_{h,+}=G_+\cap G_h$ and $G_{h,-}=G_-\cap G_h$, then $\H^{n-1}(G_{h,\pm})\to\H^{n-1}(G_{\pm})$ as $h\to\infty$. Hence, $\H^{n-1}(G_{h,+})\H^{n-1}(G_{h,-})>0$ for $h$ large enough, and, correspondingly, the sets
  \[
  F_{h,+}=F\cap(G_{h,+}\times\R)\,,\qquad F_{h,-}=F\cap(G_{h,-}\times\R)\,,
  \]
  define a non-trivial Lebesgue measurable partition $\{F_{h,+},F_{h,-}\}$ of $F_h$. By \eqref{kkk},
\begin{equation}
  \label{positivo1}
  \H^{n-1}\Big(\pae F_{h,+}\cap\pae F_{h,-}\cap F_h^{(1)}\Big)>0\,,
\end{equation}
for $h$ large enough. Let us now set
\[
\Lambda_h=\p\Big(\pae F_{h,+}\cap\pae F_{h,-}\cap F_h^{(1)}\Big)\,,\qquad\forall h\in\N\,.
\]
If $\H^{n-2}(\Lambda_h)<\infty$, then, by \cite[2.10.45]{FedererBOOK}, for every $R>0$ we have
\begin{eqnarray*}
\H^{n-2}(\Lambda_h)\L^1((-R,R))&\ge& c(n)\,\H^{n-1}(\Lambda_h\times(-R,R))
\\
&\ge& c(n)\,\H^{n-1}\Big(\pae F_{h,+}\cap\pae F_{h,-}\cap F_h^{(1)}\cap\{|\q x|<R\}\Big)\,,
\end{eqnarray*}
so that, by \eqref{positivo1}, $\H^{n-2}(\Lambda_h)>0$ for every $h$ large enough. We now claim that, given $h\in\N$ there exists $k_h\in\N$ such that
\begin{equation}
  \label{inclusion0}
  \Lambda_h\subset\Sigma_k\,,\qquad\forall k\ge k_h\,;
\end{equation}
this will conclude the proof. To show \eqref{inclusion0}, we start noticing that
\begin{eqnarray*}
  z\in G_+^{(0)}&&\Rightarrow\qquad  z\in G_{h,+}^{(0)}
  \\
  &&\Rightarrow\qquad(z,s)\in (G_{h,+}\times\R)^{(0)}\,,\qquad\hspace{0.95cm}\forall s\in\R\,,
  \\
  &&\Rightarrow\qquad(z,s)\in [F\cap(G_{h,+}\times\R)]^{(0)}\,,\qquad\forall s\in\R\,,
  \\
  &&\Rightarrow\qquad z\notin \p(\pae F_{h,+})\,;
\end{eqnarray*}
similarly, being $G_+$ and $G_-$ disjoint, $z\in G_+^{(1)}$ implies $z\in G_{-}^{(0)}$, and thus $z\notin \p(\pae F_{h,-})$. We have thus proved so far that
\begin{equation}
  \label{inclusion1}
  \Lambda_h\subset\p\Big(\pae F_{h,+}\cap\pae F_{h,-}\Big)\subset\pae G_+\cap\pae G_-\,,\qquad\forall h\in\N\,.
\end{equation}
We now notice that
\begin{eqnarray}\nonumber
  G_h^{(1)}&\subset&\{v>t_h\}^{(1)}\cap\{v<1-t_h\}^{(1)}
  \\\nonumber
  \mbox{{\small (by \eqref{dens2} and \eqref{dens3})}}\qquad&\subset&\{v^\wedge\ge t_h\}\cap\{v^\vee\le 1-t_h\}
  \\\label{inclusion2}
  \mbox{{\small (by \eqref{continuous and decreasing})}}\qquad&\subset&\{f^\vee\le \Psi(t_h)\}\cap\{f^\wedge\ge \Psi(1-t_h)\}\,.
\end{eqnarray}
Hence, if $x\in F_h^{(1)}$, then
$x\in (G_h\times\R)^{(1)}$, and thus $\p x=z\in G_h^{(1)}$, so that, by \eqref{inclusion2},
\begin{equation}
  \label{inclusion3}
  \Lambda_h\subset G_h^{(1)}\subset\Big\{f^\vee\le\Psi(t_h)\Big\}\cap\Big\{f^\wedge\ge\Psi(1-t_h)\Big\}\,,\qquad\forall h\in\N\,.
\end{equation}
By combining \eqref{inclusion1}, \eqref{inclusion3}, and the definition of $\Sigma_k$, we thus come to prove \eqref{inclusion0}, provided we choose $k_h$ such that $k_h>\Psi(t_h)$ and $-k_h<\Psi(1-t_h)$. This completes the proof.
\end{proof}

\begin{theorem}\label{thm gino}
  If $v:\R\to[0,1]$ is Lebesgue measurable with $P_\g(F[v])<\infty$, and both $F[v]$ and $\R^2\setminus F[v]$ are indecomposable sets, then $E\in\M(v)$ if and only if $\H^2(E\Delta F[v])=0$ or $\H^2(E\Delta g(F[v]))=0$.
\end{theorem}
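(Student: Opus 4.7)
The plan is to reduce to Theorem \ref{thm gauss R2}, showing that the two indecomposability hypotheses force condition (ii) there: $\{0<v<1\}$ is $\H^1$-equivalent to an open interval $I$, on which $v^\wedge>0$ and $v^\vee<1$. Setting $F=F[v]$ and $f=\Psi\circ v$, the crux is the following density lemma. If $F$ is indecomposable and $c\in\R$ is such that $\{v>0\}$ has positive $\H^1$-measure in both $(-\infty,c)$ and $(c,\infty)$, then $v^\wedge(c)>0$; symmetrically, if $\R^2\setminus F$ is indecomposable and $\{v<1\}$ has positive measure on each side of $c$, then $v^\vee(c)<1$. The second half follows by applying the first to $1-v$, using $\R^2\setminus F[v]=_{\H^2}g(F[1-v])$ together with the invariance of indecomposability under $\H^2$-equivalence and reflections, and the identity $(1-v)^\wedge=1-v^\vee$ from \eqref{continuous and decreasing}.

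To prove the lemma, assume toward contradiction that $v^\wedge(c)=0$, i.e.\ $f^\vee(c)=+\infty$, and split $F$ into $F_\pm:=F\cap(\{x_1\gtrless c\}\times\R)$. Both pieces have positive $\H^2$-measure by hypothesis, and clearly $\pae F_+\cap\pae F_-\subset\{c\}\times\R$, so by Remark \ref{remark indecomposable 2} it suffices to show $F^{(1)}\cap(\{c\}\times\R)=\emptyset$. Fix $T\in\R$ and set $\e:=\Phi(T)/2\in(0,1)$; since $\Psi$ is strictly decreasing, $\Psi(\e)>\Psi(\Phi(T))=T$, and so $\Psi(\e)>T+r$ for all sufficiently small $r$. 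Then any $z\in\D_{c,r}$ with $v(z)<\e$ has $f(z)>T+r$ and contributes an empty slice to $F\cap\C_{(c,T),r}$, yielding
\[
\frac{\H^2(F\cap\C_{(c,T),r})}{4r^{2}}\le 1-\frac{\H^1(\{v<\e\}\cap\D_{c,r})}{2r}.
\]
Because $v^\wedge(c)=0$ forces $\theta^*(\{v<\e\},c)>0$, a subsequence $r_n\to 0$ realising this upper density keeps the right hand side bounded away from $1$. Hence $\H^2(F^c\cap\C_{(c,T),r_n})/r_n^2$ is bounded below by a positive constant, so $(c,T)\notin F^{(1)}$; with $T\in\R$ arbitrary we conclude, contradicting indecomposability of $F$.

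From the lemma, (ii) of Theorem \ref{thm gauss R2} follows in two quick steps. First, $\{v>0\}^{(1)}$ is $\H^1$-equivalent to an interval: any $c$ lying strictly between two elements of $\{v>0\}^{(1)}$ but not itself in $\{v>0\}^{(1)}$ satisfies $\theta^*(\{v=0\},c)>0$ and hence $v^\wedge(c)=0$, while $\{v>0\}$ has positive measure on both sides of $c$, contradicting the lemma; the analogous argument using $\R^2\setminus F$ shows $\{v<1\}^{(1)}$ is an interval too, whence $G^{(1)}=\{v>0\}^{(1)}\cap\{v<1\}^{(1)}$ is the desired open interval $I$. Second, every $c\in I$ lies interior to both $\{v>0\}^{(1)}$ and $\{v<1\}^{(1)}$, so both halves of the lemma apply and deliver $v^\wedge(c)>0$ and $v^\vee(c)<1$; Theorem \ref{thm gauss R2} then gives rigidity. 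The main technical difficulty is precisely the density estimate in the lemma: $v^\wedge(c)=0$ supplies only \emph{upper} density information about $\{v<\e\}$ at $c$, yet membership of $(c,T)$ in $F^{(1)}$ demands the \emph{full} limit of the density ratio to equal one, so a single bad subsequence on which the complementary density stays bounded below is enough to exclude it.
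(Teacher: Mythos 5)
Your proof is correct and follows essentially the same route as the paper: the crux in both is the density lemma that, when $v^\wedge(c)=0$, the vertical line $\{c\}\times\R$ misses $F^{(1)}$, so that $\{F\cap(\{x_1>c\}\times\R),\,F\cap(\{x_1<c\}\times\R)\}$ would be a partition violating indecomposability of $F$ whenever $\{v>0\}$ has positive measure on both sides of $c$; the symmetric statement for $\R^2\setminus F$ and $v^\vee=1$ then combines to verify condition (ii) of Theorem \ref{thm gauss R2}. Two small stylistic differences are worth noting: you derive the $v^\vee<1$ half cleanly by applying the $v^\wedge>0$ half to $1-v$ together with $\R^2\setminus F[v]=_{\H^2}g(F[1-v])$, $(1-v)^\wedge=1-v^\vee$, and the invariance of indecomposability under reflections and $\H^2$-equivalence, where the paper simply repeats the argument; and you show that $\{v>0\}^{(1)}$ and $\{v<1\}^{(1)}$ are intervals separately and intersect them, whereas the paper argues by contradiction from the least closed interval containing $\{0<v<1\}$. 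Both variants are correct and neither changes the underlying structure of the argument, so the overall strategy is the same. One very minor imprecision: when you say that every $c\in I$ ``lies interior to'' $\{v>0\}^{(1)}$ and $\{v<1\}^{(1)}$, what the lemma actually needs is that $\{v>0\}$ (resp.\ $\{v<1\}$) has positive measure on both sides of $c$, which does follow since $c$ sits strictly between two points of the interval $G^{(1)}\subset\{v>0\}^{(1)}\cap\{v<1\}^{(1)}$ — but it is worth stating it in those terms rather than topologically.
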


\begin{proof}
  {\it Step one:} We show that, if $F=F[v]$ is indecomposable in $\R^2$ and $v^\wedge(c)=0$, then
  \begin{equation}
    \label{johell1}
    \H^2(F\cap((c,\infty)\times\R))\,\H^2(F\cap((-\infty,c)\times\R))=0\,.
  \end{equation}
  Indeed, let us assume this is not the case, and set $F_+=F\cap((c,\infty)\times\R)$ and $F_-=F\cap((-\infty,c)\times\R)$. We claim that $\{F_+,F_-\}$ is a non-trivial partition of $F$ by sets of locally finite perimeter with the property that
  \begin{equation}
    \label{johell2}
      F^{(1)}\cap\pae F_+\cap\pae F_-=\emptyset\,,
  \end{equation}
  against the indecomposability of $F$. To show that \eqref{johell2} holds true, let us notice that since $F_+$ and $F_-$ are disjoint subsets of $F$ whose union is $F$, we have
  \[
  F^{(1)}\cap\pae F_+\cap\pae F_-=F^{(1)}\cap\pae F_+=F^{(1)}\cap(\{c\}\times\R)\,.
  \]
  However, if $(c,t)\in F^{(1)}$ for some $t\in\R$, then for every $r<1$ we find
  \begin{eqnarray*}
    4\,r^2+o(r^2)&=&\H^2(F\cap\C_{(c,t),r})=\int_{t-r}^{t+r}\H^1(\D_{c,r}\cap\{f<s\})\,ds
    \\
    &\le&2r\,\H^1(\D_{c,r}\cap\{f<t+1\})\,,
  \end{eqnarray*}
  which leads to a contradiction as, by $v^\wedge(c)=0$ (that is, $f^\vee(c)=+\infty$), we have
  \[
  \liminf_{r\to 0^+}\frac{\H^1(\D_{c,r}\cap\{f<t+1\})}{2r}<1\,.
  \]
  This proves \eqref{johell2}, thus our claim.

  \medskip

  \noindent {\it Step two:} By arguing as in the proof of step one, we notice that, if $\R^2\setminus F$ is indecomposable in $\R^2$ and $v^\vee(c)=1$, then
  \begin{equation}
    \label{johell3}
    \H^2(((c,\infty)\times\R)\setminus F)\,\H^2(((-\infty,c)\times\R)\setminus F)=0\,.
  \end{equation}

  \medskip

  \noindent {\it Step three:} We show that, if both $F$ and $\R^2\setminus F$ are indecomposable, then $\{0<v<1\}$ is $\H^1$-equivalent to an open interval. Indeed, let $I$ be the least closed interval that contains $\{0<v<1\}$ modulo $\H^1$. If $\{0<v<1\}$ is not $\H^1$-equivalent to $I$, then there exists $J\subset I\cap(\{v=0\}\cup\{v=1\})$ with $\H^1(J)>0$. In particular, if $\e=\H^1(J)/3$, then there exists $c\in J^{(1)}$ with
  \begin{equation}
    \label{johell4}
      c>\inf\,I+\e\,,\qquad c<\sup I\,-\e\,.
  \end{equation}
  By \eqref{johell4}, and by minimality of $I$, we see that
  \begin{eqnarray}
    \label{johell5}
  \H^2(F\cap((c,\infty)\times\R))\,\H^2(F\cap((-\infty,c)\times\R))>0\,,
  \\
  \label{johell6}
    \H^2(((c,\infty)\times\R)\setminus F)\,\H^2(((-\infty,c)\times\R)\setminus F)>0\,.
  \end{eqnarray}
  By $c\in J^{(1)}$ we find $c\in(\{v=0\}\cup\{v=1\})^{(1)}$, and thus either $\theta^*(\{v=0\},c)>0$ or $\theta^*(\{v=1\},c)>0$; therefore, either $v^\wedge(c)=0$ (but then \eqref{johell5} contradicts \eqref{johell1}) or $v^\vee(c)=1$ (but then \eqref{johell6} contradicts \eqref{johell3}). Hence, $\{0<v<1\}$ is $\H^1$-equivalent to the interval $I$.

  \medskip

  \noindent {\it Step four:} We prove the validity of condition (ii) in Theorem \ref{thm gauss R2} by a simple combination of the first three steps. Hence, rigidity holds true by Theorem \ref{thm gauss R2}.
\end{proof}

\bibliography{references}
\bibliographystyle{is-alpha}

\end{document}